\newif\ifPDF
\newtheorem{thm}{Theorem}[section]
\newtheorem{cor}[thm]{Corollary}
\newtheorem{lem}[thm]{Lemma}
\newtheorem{prop}[thm]{Proposition}
\theoremstyle{definition}
\newtheorem{defn}[thm]{Definition}
\theoremstyle{remark}
\newtheorem{rem}[thm]{Remark}
\newtheorem{example}[thm]{Example}
\numberwithin{equation}{section}
\newcommand{\norm}[1]{\left\Vert#1\right\Vert}
\newcommand{\abs}[1]{\left\vert#1\right\vert}
\newcommand{\set}[1]{\left\{#1\right\}}
\newcommand{\Int}{\mathbb Z}
\newcommand{\Comp}{\mathbb C}
\newcommand{\eps}{\varepsilon}
\newcommand{\F}{\mathcal{F}}
\newcommand{\Kone}{\mathrm{K}_1}
\newcommand{\M}[2]{\mathrm{M}_{#1}(#2)}
\renewcommand{\varnothing}{\O}
\begin{document}


\title{Stable rank of transformation group C*-algebras}

\author{Chun Guang Li}
\address{College of Mathematics and Statistics, Northeast Normal University, Changchun, Jilin, China, 130024}
\email{licg864@nenu.edu.cn}

\author{Zhuang Niu}
\address{Department of Mathematics and Statistics, University of Wyoming, Laramie, Wyoming, USA, 82071}
\email{zniu@uwyo.edu}

\keywords{Stable rank one, crossed product C*-algebras}
\date{\today}
\dedicatory{Dedicated to Professor George A.~Elliott on the occasion of his 75th birthday}
\subjclass{46L35, 46L05, 37A55}


\begin{abstract}
It is shown that, for a free and minimal $\Int^d$-action on a compact Hausdorff space $X$, the transformation group C*-algebra $\mathrm{C}(X)\rtimes\Int^d$ always has stable rank one, i.e., the invertible elements are dense. 
Moreover, the C*-algebra $\mathrm{C}(X)\rtimes\Int^d$ is also shown to be classified by the Elliott invariant if, and only if, the strict order of its Cuntz semigroup is determined by the traces. That is, it satisfies the Toms-Winter conjecture.

In fact, for any free and minimal $\Gamma$-action on $X$, where $\Gamma$ is a countable discrete amenable group, if $(X, \Gamma)$ has the uniform Rokhlin property and Cuntz comparison of open sets, then the C*-algebra $\mathrm{C}(X)\rtimes\Gamma$ is shown to have stable rank one and to satisfy the Toms-Winter conjecture.
\end{abstract}

\maketitle

\setcounter{tocdepth}{1}
\tableofcontents

\section{Introduction}

The topological stable rank of a unital C*-algebra $A$, denoted by $\mathrm{tsr}(A)$, was introduced by Rieffel in his seminal paper \cite{Rieffel-DimStr} as a topological version of the Bass stable rank of a ring. Consider the set of $n$-tuples generating $A$ as a left ideal, $$Lg_n:=\{(x_1, x_2, ..., x_n)\in A^n: Ax_1 + Ax_2+ \cdots+ Ax_n = A\}.$$ Then the topological stable rank of $A$, denoted by $\mathrm{tsr}(A)$, is the smallest $n$ such that $Lg_n$ is dense in $A^n$ (if no such $n$ exists, then the topological stable rank of $A$ is $\infty$). It was shown in \cite{HV-sr} that the topological stable rank of  a C*-algebra agrees with its Bass stable rank. Thus, we may just refer it as stable rank.

The stable rank models dimension of a topological space. It was shown in \cite{Rieffel-DimStr} that the stable rank of the commutative C*-algebra $\mathrm{C}(X)$, where $X$ is a compact Hausdorff space, is $\lfloor \frac{\mathrm{dim}(X)}{2}\rfloor + 1$, where $\lfloor \cdot \rfloor$ denotes the integer part. It was shown in \cite{Vill-sr} that for any $n\in\{1, 2, ..., \infty\}$, there exists a simple unital separable C*-algebra $A$ ($A$ can be chosen to be the limit of an inductive sequence of homogeneous C*-algebras) such that $\mathrm{tsr}(A) = n$. 

The class of C*-algebras with stable rank one is particularly interesting. Any such C*-algebra $A$ is stably finite, has cancellation of projections, and has the property that $\Kone(A)$ is canonically isomorphic to $\mathrm{U}(A)/\mathrm{U}_0(A)$. It is also well known that a C*-algebra has stable rank one if, and only if, $A = \overline{\mathrm{GL}(A)}$, where $\mathrm{GL}(A)$ denotes the group of invertible elements of $A$.

Many classes of simple C*-algebras have been shown to have stable rank one. For instance, any simple unital finite C*-algebra which absorbs a UHF algebra or, more generally, absorbs the Jiang-Su algebra $\mathcal Z$, has stable rank one (see \cite{RorUHF} and \cite{Ror-Z-stable}, respectively). Such C*-algebras, when nuclear, certainly are well behaved from the perspective of the classification program. For non-nuclear C*-algebras, it was shown in \cite{DHR-sr1} that the reduced group C*-algebra $\mathrm{C}^*_\mathrm{r}(G_1 * G_2)$ has stable rank one if $\abs{G_1}\geq 2$ and $\abs{G_2} \geq 3$.

On the other hand, even beyond the classifiable C*-algebras,  remarkably, it was shown by Elliott, Ho, and Toms (\cite{EHT-sr1}, also see \cite{Ho-thesis}) that any simple unital AH algebra with diagonal maps (this class of C*-algebras contains the exotic AH algebras of \cite{Vill-perf} and \cite{Toms-Ann}, which cannot be classified by the ordered K-groups together with the traces---the Elliott invariant), whether classifiable or not, always has stable rank one. 

This result is in fact the main motivation of the current paper: Consider the C*-algebra of a minimal homeomorphism. In general, its behaviour is expected to be parallel to the behaviour of an AH algebra with diagonal maps (see, for instance, \cite{Niu-MD} and \cite{EN-MD0} on classifiability and mean dimension), and it had been anticipated for some time in the C*-algebra community that the C*-algebra of a minimal homeomorphism should always have stable rank one. (Note that, as shown in \cite{GK-Dyn}, there exists a minimal homeomorphism of an infinite compact Hausdorff space such that the corresponding C*-algebra is not classifiable in terms of Elliott invariant.)

A considerable amount of work has been done concerning this question, and finally it was solved recently by Alboiu and Lutley in \cite{Lutley-Alboiu}. For the C*-algebra of a minimal homeomorphism, one can consider the orbit-breaking subalgebra, which was introduced by Putnam for Cantor dynamical systems (\cite{Put-PJM}) and then constructed for a general minimal homeomorphism by Q.~Lin (\cite{Lin-Q-RSA}). It was shown by Archey and Phillips  (\cite{A-NCP-LAlg}) that if the orbit-breaking subalgebra has stable rank one, then the transformation group C*-algebra must have stable rank one. If the minimal dynamical system has a Cantor factor (so that the orbit-breaking subalgebra is an AH algebra with diagonal maps),  with the result of \cite{EHT-sr1}, one has that the transformation group C*-algebra has stable rank one (see \cite{A-NCP-LAlg}) (this result was generalized by Suzuki in \cite{Suzuki-sr1} to the C*-algebra of a minimal almost finite groupoid). For a general minimal homeomorphism (i.e., $\Int$-action), the orbit-breaking subalgebra might not be AH, but it is still a unital inductive limit of subhomogeneous C*-algebras with diagonal maps (the DSH algebras of \cite{Lutley-Alboiu}). Alboiu and Lutley show in \cite{Lutley-Alboiu} that any unital simple DSH algebra has stable rank one, and thus the C*-algebra of a minimal homeomorphism has stable rank one.


Beyond the case of $\Int$-actions, however, it is not clear how to construct large subalgebras in general. To overcome this difficulty,  Uniform Rokhlin Property (URP) and Cuntz-comparison of Open Sets (COS) (see Definitions \ref{Defn-URP} and \ref{Defn-COS}) for a topological dynamical system $(X, \Gamma)$, where $\Gamma$ is a countable discrete amenable group, were introduced in \cite{Niu-MD-Z}. In \cite{Niu-MD-Z}, it was shown that, with the (URP) and (COS), the radius of comparison of the crossed product C*-algebra $\mathrm{C}(X)\rtimes\Gamma$ is dominated by half of the mean dimension of $(X, \Gamma)$ (\cite{Niu-MD-Z}), and the C*-algebra $\mathrm{C}(X)\rtimes\Gamma$ is classified by its Elliott invariant if $(X, \Gamma)$ has mean dimension zero (\cite{Niu-MD-Z-absorbing}). Moreover, it was also shown in \cite{Niu-MD-Zd} that any free and minimal $\Int^d$-action has the (URP) and (COS).

In this paper, we again consider these two properties, and we show that if a free and minimal $\Gamma$-action has the (URP) and (COS), then the transformation group C*-algebra must have stable rank one (Theorem \ref{main-thm}). Since any free and minimal $\Int^d$-action has the (URP) and (COS), the C*-algebra $\mathrm{C}(X) \rtimes \Int^d$, classifiable or not, always has stable rank one:
\theoremstyle{theorem}
\newtheorem*{thmN}{Theorem}
\begin{thmN}[Corollary \ref{sr-Z}]\label{thm-Z}
Let $\Int^d$ act freely and minimally on a compact Hausdorff space $X$. Then $\mathrm{tsr}(\mathrm{C}(X) \rtimes \Int^d) = 1.$
\end{thmN}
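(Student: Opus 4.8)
The plan is to obtain this as an immediate corollary of the main theorem (Theorem \ref{main-thm}), so the only thing to do is to verify that a free and minimal $\Int^d$-action meets the hypotheses of that theorem. Recall that Theorem \ref{main-thm} asserts that whenever a countable discrete amenable group $\Gamma$ acts freely and minimally on a compact Hausdorff space $X$, and the system $(X,\Gamma)$ enjoys the Uniform Rokhlin Property (URP) and Cuntz comparison of open sets (COS), the crossed product $\mathrm{C}(X)\rtimes\Gamma$ has stable rank one. I would therefore specialize to $\Gamma=\Int^d$ and check each hypothesis in turn.

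First, $\Int^d$ is a countable discrete group, and being abelian it is amenable; thus the standing assumption on $\Gamma$ is satisfied. Second, I would invoke the result of \cite{Niu-MD-Zd}, cited in the introduction, which establishes that every free and minimal $\Int^d$-action on a compact Hausdorff space automatically possesses both the (URP) and the (COS). With these two properties in hand, Theorem \ref{main-thm} applies verbatim and yields that $\mathrm{C}(X)\rtimes\Int^d$ has stable rank one, that is, $\mathrm{tsr}(\mathrm{C}(X)\rtimes\Int^d)\le 1$.

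Finally, for the reverse inequality I would note that, since $X$ is compact, the crossed product is unital and nonzero, so its stable rank is at least $1$. Combining the two bounds gives the stated equality $\mathrm{tsr}(\mathrm{C}(X)\rtimes\Int^d)=1$. The only genuine obstacle lies upstream, in the proof of Theorem \ref{main-thm} itself and in the verification of the (URP) and (COS) for $\Int^d$-actions carried out in \cite{Niu-MD-Zd}; the corollary is simply an instantiation of those results and requires no further analysis.
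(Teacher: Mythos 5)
Your proposal is correct and follows exactly the paper's own route: cite \cite{Niu-MD-Zd} for the (URP) and (COS) of free minimal $\Int^d$-actions and then apply Theorem \ref{main-thm}. The additional remark about the lower bound $\mathrm{tsr}\geq 1$ is trivially true (any nonzero unital C*-algebra has stable rank at least $1$) and the paper omits it, but nothing is amiss.
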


As consequences of stable rank one (some of them are well known), we obtain the following properties of the crossed product C*-algebra $A=\mathrm{C}(X)\rtimes\Int^d$, where $(X, \Int^d)$ is free and minimal (in general, those properties also hold for any C*-algebra $A=\mathrm{C}(X)\rtimes\Gamma$, where $(X, \Gamma)$ is free, minimal, and has the (URP) and (COS)):
\begin{itemize}

\item $A$ has cancellation of projections, cancellation in Cuntz semigroup, and $\mathrm{U}(A)/\mathrm{U}_0(A) \cong \Kone(A)$ (Corollary \ref{cor-cancellation}).

\item The approximately unitary equivalence class of a homomorphism from an AI algebra to $A$ is determined by the induced map between the Cuntz semigroups (Corollary \ref{cor-AI}).

\item Any strictly positive lower semicontinous affine function on $\mathrm{T}(A)$ can be realized as the rank function of some positive element of $A \otimes\mathcal K$ (Corollary \ref{Cu-surj}). 

\item $A$ absorbs the Jiang-Su algebra tensorially if, and only if, $A$ has strict comparison of positive elements (Corollary \ref{cor-Z}). That is, $A$ satisfies the Toms-Winter conjecture. 
(Note that any metrizable Choquet simplex can arise as the trace simplex of a transformation group C*-algebra $\mathrm{C}(X)\rtimes\Int$.)

\item The real rank of $A$ is either $0$ or $1$ (Corollary \ref{cor-RR}).
\end{itemize}

To prove the theorem above, we introduce Property (D) (Definition \ref{defn-Prop-D}), a version of diagonalizability. Any finite C*-algebra with Property (D) is shown to have stable rank one (Theorem \ref{thm-ab-tsr1}). In Sections \ref{D-1} and \ref{D-2}, it is shown that any transformation group C*-algebra of a free and minimal $\Gamma$-action with (URP) and (COS) has Property (D) (Proposition \ref{main-prop}), and thus the main theorem follows. 

Property (D) can also be applied to other C*-algebras. In Section \ref{D-remark}, we observe that simple unital $\mathcal Z$-stable algebras and simple unital AH algebras with diagonal maps have Property (D). These C*-algebras (the first when finite) are known to have stable rank one (\cite{Ror-Z-stable} and \cite{EHT-sr1}), and hence Property (D) provides an alternative approach to the stable rank of these C*-algebras. 

\subsubsection*{Acknowledgements} The research of the second named author is supported by an NSF grant (DMS-1800882). The result in this paper was obtained during the visit of the first named author to the University of Wyoming in 2019-2020, which was supported by a CSC visiting scholar fellowship (No.~201906625028). The first named author thanks the Department of Mathematics and Statistics at  the University of Wyoming for its hospitality. The research of the first named author is also partly supported by an NNSF grant of China (No.~11401088).

\section{Notation and preliminaries}

\renewcommand{\varnothing}{\O}

\subsection{Topological Dynamical Systems}

\begin{defn}
Consider a topological dynamical system $(X, \Gamma)$, where $X$ is a separable compact Hausdorff space, and $\Gamma$ is a discrete group which acts on $X$ from the right. 
The dynamical system $(X, \Gamma)$ is said to be minimal if $$Y\gamma = Y,\quad \gamma \in\Gamma,$$ for some closed set $Y\subseteq X$ implies $Y=\varnothing$ or $Y=X$; it is said to be free if $$x\gamma = x$$ for some $x\in X$ and $\gamma\in \Gamma$ implies $\gamma=e$.

\end{defn}

\begin{defn}
A Borel measure $\mu$ on $X$ is invariant under the action $\sigma$ if for any Borel set $E\subseteq X$, one has $$\mu(E) = \mu(E\gamma),\quad \gamma\in\Gamma.$$ Denote by $\mathcal M_1(X, \Gamma)$ the set of all invariant Borel probability measures on $X$. It is a Choquet simplex under the weak* topology.
\end{defn}

\begin{defn}\label{defn-amenable}
Let $\Gamma$ be a countable discrete group. Let $K\subseteq\Gamma$ be a finite set and let $\delta>0$. Then a finite set $E \subseteq \Gamma$ is said to be $(K, \eps)$-invariant if $$\frac{\abs{EK\Delta E}}{\abs{E}} < \eps.$$

The group $\Gamma$ is amenable if there is a sequence  $(\Gamma_n)$ of finite subsets of $\Gamma$ such that for any $(K, \eps)$, there is $N$ such that $\Gamma_n$ is $(K, \eps)$-invariant for any $n>N$. The sequence $(\Gamma_n)$ is called a F{\o}lner sequence.

The $K$-interior of a finite set $E\subseteq\Gamma$ is defined as $$\mathrm{int}_K(E) = \{\gamma\in E: \gamma K \subseteq E\},$$ and the $K$-boundary of $E$ is defined as $$\partial_KE:= E \setminus \mathrm{int}_K(E) = \{\gamma\in E: \textrm{$\gamma\gamma' \notin E$ for some $\gamma'\in K$}\}.$$

Note that $$\abs{E\setminus\mathrm{int}_K(E)} \leq\abs{K}\abs{EK\setminus E}\leq \abs{K}\abs{EK\Delta E} ,$$ and hence for any $\eps>0$, if $E$ is $(K, {\eps}/{\abs{K}})$-invariant, then $$\frac{\abs{E\setminus\mathrm{int}_K(E)}}{\abs{E}} < \eps.$$

\end{defn}

\begin{rem}
If a set $E\subseteq\Gamma$ is $(\mathcal F, \eps)$-invariant, then, for any $\gamma\in\Gamma$, the left translation $\gamma E$ is again $(\mathcal F, \eps)$-invariant.
\end{rem}

\begin{defn}
An (exact) tiling of a discrete group consists of
\begin{itemize}
\item a finite collection $\mathcal S=\{\Gamma_1, ...,\Gamma_n\}$ of finite subsets of $\Gamma$ containing the unit $e$, called the shapes,
\item a finite collection $\mathcal C = \{C(S): S\in\mathcal S\}$ of disjoint subsets of $\Gamma$, called center sets,
\end{itemize}
such that the left translations
$$cS,\quad c\in C(S),\ S\in\mathcal S$$
form a partition of $\Gamma$.
\end{defn}

\begin{rem}
If $\Gamma$ is amenable, then it follows from \cite{DHZ-tiling} that for any finite set $\F\subseteq\Gamma$ and any $\eps>0$, there is a tiling of $\Gamma$ such that all its shapes are $(\mathcal F, \eps)$-invariant.
\end{rem}

\subsection{Crossed product C*-algebras}
Consider a topological dynamical system $(X, \Gamma)$. Then the group $\Gamma$ acts (from the left) on the C*-algebra $\mathrm{C}(X)$ by $$\gamma(f)= f\circ \gamma.$$
The (full) crossed product C*-algebra $A=\mathrm{C}(X)\rtimes\Gamma$ is defined to be the universal C*-algebra generated by $\mathrm{C}(X)$ and unitaries $u_\gamma$, $\gamma\in\Gamma$, with respect to the relations
$$ u_\gamma fu_\gamma^*=f\circ\gamma,\ u_{\gamma_1}u^*_{\gamma_2} = u_{\gamma_1\gamma_2^{-1}},\ u_e=1_A,\quad \gamma, \gamma_1, \gamma_2 \in \Gamma $$
The C*-algebra $A$ is nuclear if $\Gamma$ is amenable (see, for instance, Corollary 7.18 of \cite{Williams}). If, moreover, $(X, \Gamma)$ is minimal, then the C*-algebra $A$ is simple (Theorem 5.16 of \cite{Effros-Hahn} and  Th\'{e}or\`{e}me 5.15 of \cite{ZM-prod}), i.e., $A$ has no non-trivial closed two-sided ideals.

\subsection{Cuntz-sub-equivalence and rank functions}

\begin{defn}
Let $A$ be a C*-algebra, and let $a, b\in A^+$. The element $a$ is said to be Cuntz sub-equivalent to $b$, denoted by $a \precsim b$, if there are $x_i$, $y_i$, $i=1, 2, ...$, such that $$\lim_{i\to\infty} x_iby_i = a.$$
\end{defn}

\begin{example}
Let $f, g\in\mathrm{C}(X)$ be positive elements, and consider the open sets $$E := f^{-1}(0, +\infty)\quad\mathrm{and}\quad F:=g^{-1}(0, +\infty).$$ Then $f \precsim g$ if and only if $E \subseteq F$. In particular, the Cuntz equivalence class of a positive element of $\mathrm{C}(X)$ determines and is determined by its open support. 
\end{example}

Throughout this paper, we use the following notation:
\begin{defn}\label{defn-eps-cut}
For any $\eps>0$, define the function $f_\eps: [0, 1] \to [0, 1]$ by 
$$f_{\eps}(t) = \left\{\begin{array}{ll} 0, & t<\eps/2, \\ \textrm{linear}, & \eps/2 \leq t < \eps, \\ 1, & t\geq \eps .\end{array}\right.$$
\end{defn}

\begin{lem}[Proposition 2.4(iv) of \cite{RorUHF2}]\label{rordom-lem}
Let $A$ be a C*-algebra, and let $a, b\in A$ be positive. If $a \precsim b$, then for any $\delta>0$, there is $\eps>0$ and $r\in A$ such that $$f_\delta(a) = r^*f_\eps(b)r.$$ In particular, denoted by $v = f^{\frac{1}{2}}_\eps(b)r\in A$, one has
$$f_\delta(a) = v^*v\quad\mathrm{and}\quad vv^* \in\mathrm{Her}(b).$$
\end{lem}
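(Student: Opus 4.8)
The plan is to reconstruct Rørdam's cut-down argument. The entire difficulty is to convert the \emph{approximate} relation encoded in $a\precsim b$ into the two \emph{exact} equalities asserted, and the device that accomplishes this is the standard cut-down lemma: if $u,w\in A$ are positive with $\|u-w\|<\eta$, then $(u-\eta)_+ = d\,w\,d^*$ for some $d\in A$. I would combine a single application of this lemma with functional calculus that inserts the two prescribed cut-off functions $f_\delta$ and $f_\eps$ on the respective sides. Here $(\cdot-\eta)_+$ denotes the usual positive-part cut-down, which is Cuntz equivalent to the cut-downs $f_\cdot$ used in the statement.

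First I would pass from the two-sided approximation $x_iby_i\to a$ in the definition of $a\precsim b$ to a one-sided one. Since $a\geq 0$, a standard symmetrization (comparing the positive elements $c^*c\sim cc^*$ for $c=x_iby_i$ against $a^2$, and using that $cc^*$ lies in $\mathrm{Her}(b)$) shows that the two-sided and one-sided subequivalences coincide; this is exactly the step where positivity of $a$ is used. Thus, for any $\eta>0$ there is $z\in A$ with $\|a-z^*bz\|<\eta$. Next I would introduce the cut-off on the $b$ side by a pure norm estimate: since $\|b-(b-\delta_1)_+\|\leq\delta_1$, one has $\|z^*bz-z^*(b-\delta_1)_+z\|\leq\|z\|^2\delta_1$, so choosing $\delta_1$ small keeps $\|a-z^*(b-\delta_1)_+z\|<\eta$. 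Applying the cut-down lemma to $u=a$ and $w=z^*(b-\delta_1)_+z$ then yields the \emph{exact} equality
$$(a-\eta)_+ = r_0^*\,(b-\delta_1)_+\,r_0$$
for some $r_0\in A$. The final step is functional calculus. Taking $\eps:=\delta_1$, the function $t\mapsto(t-\delta_1)_+/f_\eps(t)$ extends to a bounded continuous function (it vanishes on $[0,\eps/2]$ and equals $t-\eps$ on $[\eps,\infty)$), so $(b-\delta_1)_+ = q(b)\,f_\eps(b)\,q(b)$ for a continuous $q$; similarly, choosing $\eta<\delta/2$, the function $t\mapsto f_\delta(t)/(t-\eta)_+$ is bounded and continuous, so $f_\delta(a) = h(a)\,(a-\eta)_+\,h(a)$ for a continuous $h$. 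Substituting and setting $r:=q(b)\,r_0\,h(a)$ gives the desired identity $f_\delta(a)=r^*f_\eps(b)r$.

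The ``in particular'' clause is then immediate: with $v:=f_\eps(b)^{1/2}r$ one has $v^*v=r^*f_\eps(b)r=f_\delta(a)$, while $vv^*=f_\eps(b)^{1/2}\,rr^*\,f_\eps(b)^{1/2}\in\mathrm{Her}(f_\eps(b))\subseteq\mathrm{Her}(b)$, since $f_\eps(b)\in\mathrm{Her}(b)$. I expect the main obstacle to be exactness rather than any individual estimate: each norm inequality above is elementary, but the argument works only because the cut-down lemma promotes one of them to a genuine equality, after which the two functional-calculus factorizations are exact by construction. The remaining care is bookkeeping the three parameters $\delta,\eps,\eta$ so that the cut-off regions align (in particular $\eta<\delta/2$ and $\eps=\delta_1$), together with the initial symmetrization from the two-sided to the one-sided approximation.
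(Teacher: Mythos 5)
The paper offers no proof of this lemma at all: it is quoted as Proposition 2.4(iv) of R{\o}rdam's paper and used as a black box, so there is no in-paper argument to compare against. Your reconstruction is the standard R{\o}rdam cut-down argument and is essentially correct: the reduction of the two-sided approximation to a one-sided one $\norm{a - z^*bz} < \eta$, the exactification via the lemma ``$\norm{u-w}<\eta$ with $u,w\geq 0$ implies $(u-\eta)_+ = dwd^*$'', and the two functional-calculus factorizations $(b-\eps)_+ = q(b)f_\eps(b)q(b)$ and $f_\delta(a) = h(a)(a-\eta)_+h(a)$ (the latter legitimate precisely because $\eta<\delta/2$ keeps the support of $f_\delta$ inside the region where $(t-\eta)_+$ is bounded below) assemble exactly as you describe, and the ``in particular'' clause is immediate. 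One imprecision worth flagging: in the symmetrization step, for $c = x_iby_i$ the element $cc^* = x_iby_iy_i^*bx_i^*$ does \emph{not} lie in $\mathrm{Her}(b)$. The factorization actually used is $xby = (xb^{\frac{1}{2}})(b^{\frac{1}{2}}y)$; one compares $a^2$ with $\mathrm{Re}(axby) = \mathrm{Re}(s^*t)$ for $s = b^{\frac{1}{2}}x^*a$ and $t = b^{\frac{1}{2}}y$, obtains $a^2 \leq s^*s + t^*t$ up to a small error, and it is $ss^*$ and $tt^*$ that land in $\mathrm{Her}(b)$ (after which positive elements of $\mathrm{Her}(b)$ are handled directly). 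Since you explicitly defer to this as a standard fact and the fact is true, this does not break the argument, but the parenthetical as written is not literally correct.
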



\begin{defn}\label{rank-fn}
Let $A$ be a C*-algebra, let $\mathrm{T}(A)$ denote the set of all tracial states of $A$, equipped with the topology of pointwise convergence. Note that if $A$ is unital, the set $\mathrm T(A)$ is a Choquet simplex.

Let $a$ be a positive element of $\mathrm{M}_\infty(A)$ and $\tau \in \mathrm{T}(A)$; define
$$\mathrm{d}_\tau(a):=\lim_{n\to\infty}\tau(a^{\frac{1}{n}})=\mu_\tau(\mathrm{sp}(a)\cap(0, +\infty)),\quad a\in A^+,$$ 
where $\mu_\tau$ is the Borel probability measure induced by $\tau$ on the spectrum of $a$. It is well known that if $a \precsim b$, then
$$\mathrm{d}_\tau(a) \leq \mathrm{d}_\tau(b),\quad \tau\in\mathrm{T}(A).$$

\end{defn}

\begin{example}
Consider $h\in \mathrm{C}(X)^+$ and let $\mu$ be a Borel probability measure on $X$, where $X$ is a compact Hausdorff space. Then
$$\mathrm{d}_{\tau_\mu} = \mu(f^{-1}(0, +\infty)),$$
where $\tau_\mu$ is the trace of $\mathrm{C}(X)$ defined by $$\tau_\mu(f) = \int f\mathrm{d}\mu,\quad f\in\mathrm{C}(X).$$

If $A = \mathrm{M}_n(\mathrm{C}_0(X))$, where $X$ is a locally compact Hausdorff space, then, for any positive element $a\in \mathrm{M}_\infty(A) \cong \mathrm{M}_\infty(\mathrm{C}_0(X))$ and any $\tau\in\mathrm T(A)$, one has $$\tau(a) =\int_{X} \frac{1}{n}\mathrm{Tr}(a(x))d\mu_\tau \quad\mathrm{and}\quad \mathrm{d}_\tau(a) =\int_{X} \frac{1}{n}\mathrm{rank}(a(x))d\mu_\tau, $$ where $\mu_\tau$ is the Borel measure on $X$ induced by $\tau$.

\end{example}

\begin{defn}\label{defn-phi-E}
For each open set $E \subseteq X$, pick a continuous function 
\begin{equation}
\varphi_{E}: X \to [0, 1] 
\end{equation} 
such that 
\begin{enumerate}
\item $E = \varphi_E^{-1}((0, 1])$ and
\item there is an open set $V\subseteq E$ such that $\varphi_E|_V = 1$. (In particular, $\norm{\varphi_E} = 1$.)
\end{enumerate}

For instance, one can pick $\varphi_E(x) = \min\{\frac{1}{\eps}d(x, X\setminus E), 1\}$, where $d$ is a compatible metric on $X$ and $\eps>0$ is sufficiently small. This notation will be used throughout this paper. 

Note that the hereditary sub-C*-algebra $\overline{\varphi_EA\varphi_E}$ is independent of the choice of individual function $\varphi_E$, where $A$ is a C*-algebra containing $\mathrm{C}(X)$ (in a specified way). Therefore, we shall also denote $\overline{\varphi_EA\varphi_E}$ by $\mathrm{Her}(E)$.
\end{defn}

\subsection{Order zero maps and Rokhlin towers}
\begin{defn}[Order zero maps]

Let $A, B$ be C*-algebras. A linear map $\phi: A \to B$ is said to be of order zero if $$a \perp b \Longrightarrow \phi(a) \perp \phi(b),\quad a, b\in A^+.$$

\end{defn}

Let $A$ be a C*-algebras and $\phi: \mathrm{M}_n(\Comp) \to A$ be a c.p.~order zero map. Consider the C*-algebra $C:=\mathrm{C}^*(\phi(\mathrm{M}_n)) \subseteq A$, and set $\phi(1_n) = h$. Then, by \cite{WZ-ndim}, $h\in C\cap C'$, $\norm{h} = \norm{\phi}$, and there is a homomorphism $\pi_\phi: \mathrm{M}_n(\Comp) \to \mathcal M(C)\cap \{h\}'\subseteq A^{**}$ such that $$\phi(a) = \pi_\phi(a) h,\quad a\in \mathrm{M}_n(\Comp).$$ Moreover, $$C \cong \mathrm{M}_n(\mathrm{C}_0((0, 1])).$$

\begin{defn}\label{cp-cal}
Let $\phi: \mathrm{M}_n(\Comp) \to A$ be a c.p.~order zero map, and let $f\in \mathrm{C}_0((0, \norm{h}])$, where $h=\phi(1_n)$. Then the map $$\mathrm{M}_n(\Comp) \ni a \mapsto \pi_\phi(a)f(h) \in A$$ is again an order zero map, where $\pi_\phi$ is as above. Denote this new order zero map by $f(\phi)$.

An order zero map $\psi: \mathrm{M}_n(\Comp) \to A$  is said to be extendable if there is a c.p.~order zero map $\psi': \mathrm{M}_n(\Comp) \to A$ such that $\psi=f_{\delta}(\psi')$ for some $\delta>0$.
\end{defn}

The following staement is a characterization of c.p.~order zero maps with domain a matrix algebra:
\begin{lem}[\cite{Winter-Cdim-II}]\label{dd-0}
Let $v_1, v_2, ..., v_{n}\in A$, where $A$ is a C*-algebra, such that 
\begin{itemize}
\item $v^*_1v_1 = v_2^*v_2 = \cdots = v_n^*v_n$,
\item $v^*_1v_1, v_1v_1^*, v_2v_2^*, ..., v_nv_n^*$ are mutually orthogonal, and
\item $\norm{v^*_1v_1} = 1$.
\end{itemize}
Then there is an order zero map $\phi: \mathrm{M}_{n+1}(\Comp) \to A$ such that $\norm{\phi} = 1$,
$$\phi(e_{0, 0}) = v_1^*v_1,\quad \phi(e_{i, i}) = v_iv_i^*,\quad i=1, 2, ..., n.$$
Conversely, every c.p.~order zero map $\mathrm{M}_{n+1}(\Comp) \to A$ with norm $1$ arises in this way, for suitable $v_1, ..., v_n$, which may be taken to be $\phi(e_{1, 0})$, $\phi(e_{2, 0})$, ..., $\phi(e_{n, 0})$.
\end{lem}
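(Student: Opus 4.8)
The plan is to exhibit the desired map in the structural form $\phi(\cdot) = \pi(\cdot)h$, where $\pi: \mathrm{M}_{n+1}(\Comp) \to A^{**}$ is a $*$-homomorphism and $h = \phi(1_{n+1})$ is a positive contraction commuting with $\pi(\mathrm{M}_{n+1}(\Comp))$. By the structure recalled above for order zero maps (together with its converse: multiplying a homomorphism by a commuting positive element that lands the product back in $A$ produces a c.p.\ order zero map), this is exactly what is needed.

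First I would set $h_0 := v_1^*v_1 = \cdots = v_n^*v_n$ and pass to the bidual $A^{**}$, where each $v_i$ admits a polar decomposition $v_i = w_ih_0^{1/2}$ with $w_i$ a partial isometry satisfying $w_i^*w_i = p$ and $w_iw_i^* = q_i$, where $p$ is the support projection of $h_0$ and $q_i$ that of $v_iv_i^*$. Writing $w_0 := p$, I would then define $\pi$ on matrix units by $\pi(e_{i,j}) := w_iw_j^*$ for $i, j \in \{0, 1, \ldots, n\}$.

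The heart of the argument, and the step I expect to be the main obstacle, is verifying that $\pi$ is a genuine $*$-homomorphism, i.e.\ that $\pi(e_{i,j})\pi(e_{k,l}) = \delta_{jk}\pi(e_{i,l})$. This reduces to the orthogonality relations $w_j^*w_k = \delta_{jk}\,p$, and here the hypotheses enter decisively: the mutual orthogonality of $v_1^*v_1, v_1v_1^*, \ldots, v_nv_n^*$ forces the support projections $p, q_1, \ldots, q_n$ to be mutually orthogonal, and from $w_k = q_kw_k$, $w_j^* = w_j^*q_j$ together with $q_jq_k = \delta_{jk}q_j$ one obtains $w_j^*w_k = w_j^*q_jq_kw_k = \delta_{jk}\,p$ (the cases involving the index $0$ using instead $pq_i = 0$). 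The normalization $\norm{v_1^*v_1} = 1$ guarantees $p \neq 0$, so $\pi$ is nonzero and $*$-preserving since $\pi(e_{i,j})^* = w_jw_i^* = \pi(e_{j,i})$.

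Finally I would set $h := h_0 + \sum_{i=1}^n v_iv_i^*$. As a sum of mutually orthogonal positive contractions this is a positive contraction with $\norm{h} = 1$. Using $p\,h = h_0$ and $q_i\,h = v_iv_i^*$ (both consequences of the same orthogonality) one checks that $h$ commutes with every $\pi(e_{i,j})$ and that $\phi(a) := \pi(a)h$ in fact takes values in $A$, even though $\pi$ a priori only lands in $A^{**}$; for instance $\phi(e_{i,0}) = v_ih_0^{1/2}$ and $\phi(e_{i,j}) = v_iv_j^*$ for $i, j \geq 1$, so the factor $h$ absorbs the partial isometries back into $A$. Then $\phi$ is the desired c.p.\ order zero map, with $\norm{\phi} = \norm{h} = 1$, $\phi(e_{0,0}) = h_0 = v_1^*v_1$, and $\phi(e_{i,i}) = q_ih = v_iv_i^*$, completing the construction.
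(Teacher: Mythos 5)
Your construction is correct and complete. Note, however, that the paper does not prove this lemma at all---it is quoted from \cite{Winter-Cdim-II}---so there is no in-paper argument to compare against; what you have written is essentially the standard proof underlying that reference. Your verification of the key identities is sound: the mutual orthogonality of $v_1^*v_1, v_1v_1^*, \dots, v_nv_n^*$ does force the support projections $p, q_1, \dots, q_n$ in $A^{**}$ to be mutually orthogonal, which gives $w_j^*w_k=\delta_{jk}p$ and hence that $\pi$ is a $*$-homomorphism; the element $h=h_0+\sum_i v_iv_i^*$ commutes with each $w_iw_j^*$ (both products equal $w_ih_0w_j^*$), and $\pi(e_{i,j})h=w_ih_0w_j^*$ lands in $A$ since it equals $v_iv_j^*$, $v_ih_0^{1/2}$, $h_0^{1/2}v_j^*$, or $h_0$ according to which indices vanish. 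For comparison, an equivalent packaging of the same facts---which the paper itself exploits in the proof of Lemma \ref{n-shifting}---is that the universal C*-algebra on these relations is the dimension drop algebra $D_{n+1}\cong \mathrm{C}_0((0,1])\otimes\mathrm{M}_{n+1}(\Comp)$, so one may simply compose the canonical order zero map $\mathrm{M}_{n+1}(\Comp)\ni a\mapsto \mathrm{id}_{(0,1]}\otimes a\in D_{n+1}$ with the homomorphism $D_{n+1}\to A$ given by universality; your bidual argument is in effect a direct proof of that universal property restricted to what is needed here, and it buys a self-contained argument at the cost of working in $A^{**}$.
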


\begin{defn}
A Rokhlin tower for  a dynamical system $(X, \Gamma)$ is a pair $(B, \Gamma_0)$, where $B\subseteq X$ and $\Gamma_0\subseteq\Gamma$ is finite, such that the sets $$B\gamma,\quad \gamma\in\Gamma_0,$$ are mutually disjoint. It is called an open tower if the base set $B$ is open. Without loss of generality, one may assume that $\Gamma_0$ contains the unit of $\Gamma$.
\end{defn}

One can naturally construct order-zero maps from Rokhlin towers, as follows.

Let $(B, \Gamma_0)$ be a tower, 
and pick a positive function $e: X \to [0, 1]$ such that $e^{-1}((0, 1])\subseteq B$. 
Let $\gamma_1, \gamma_2 \in \Gamma_0$ and consider the contraction
$$ v:=u^*_{\gamma_2}e^{\frac{1}{2}}u_{\gamma_1} \in \mathrm{C}(X)\rtimes\Gamma.$$ Then
$$v^*v = u^*_{\gamma_1} e u_{\gamma_1} = e\circ\gamma_1^{-1}\quad\mathrm{and}\quad vv^* = u^*_{\gamma_2} e u_{\gamma_2}=e\circ\gamma_2^{-1}.$$

In general, if $F_1, F_2 \subseteq \Gamma_0$ are two disjoint sets with $\abs{F_1} = \abs{F_2}$, pick a one-to-one correspondence $\theta: F_1 \to F_2$, and consider the element 
$$v := \sum_{\gamma \in F_1}u^*_{\theta(\gamma)}e^{\frac{1}{2}}u_{\gamma} \in \mathrm{C}(X)\rtimes\Gamma.$$ Then
\begin{eqnarray*}
v^*v & = & \sum_{\gamma_1, \gamma_2\in F_1} u^*_{\gamma_1}e^{\frac{1}{2}}u_{\theta(\gamma_1)} u^*_{\theta(\gamma_2)}e^{\frac{1}{2}}u_{\gamma_2} \\
& = & \sum_{\gamma_1, \gamma_2\in F_1} u^*_{\gamma_1}u_{\theta(\gamma_1)}(e^{\frac{1}{2}}\circ\theta(\gamma_1)^{-1}) (e^{\frac{1}{2}} \circ \theta(\gamma_2)^{-1}) u^*_{\theta(\gamma_2)}u_{\gamma_2} \\
& = & \sum_{\gamma\in F_1} u^*_{\gamma}e u_{\gamma},
\end{eqnarray*}
and the same calculation shows that
\begin{equation*}
vv^* =  \sum_{\gamma\in F_1} u^*_{\theta(\gamma)}e u_{\theta(\gamma)} = \sum_{\gamma\in F_2} u^*_{\gamma}e u_{\gamma}.
\end{equation*}

Now, suppose there are given mutually disjoint sets $$\Gamma_{0, 1}, \Gamma_{0, 2}, ..., \Gamma_{0, n} \subseteq \Gamma_0$$ such that $$\abs{\Gamma_{0, 1}} = \abs{\Gamma_{0, 2}} = \cdots = \abs{\Gamma_{0, n}}.$$

Consider the orthogonal positive elements $$e_1 : = \sum_{\gamma\in\Gamma_{0, 1}}u^*_\gamma e u_\gamma,\ ... ,\  e_n := \sum_{\gamma\in\Gamma_{0, n}}u^*_\gamma e u_\gamma.$$ Then the calculation above shows that there are $v_1, v_2, ..., v_{n-1}$ such that
$$ v^*_1v_1 = v_2^*v_2 = \cdots = v_{n-1}^*v_{n-1} = e_1 $$
and
$$v_1v_1^* = e_2, v_2v_2^* = e_2, ..., v_{n-1}v_{n-1}^* = e_n.$$ So, by Lemma \ref{dd-0}, there is an order zero map $\phi: \mathrm{M}_{n}(\Comp) \to A$ such that $$\ \phi(e_{i, i}) = e_i,\quad i=1, ..., n.$$ 

In summary, one has the following lemma:
\begin{lem}\label{existence-0-map}
Let $(B, \Gamma_0)$ be a Rokhlin tower, and let $\Gamma_{0, 1}, \Gamma_{0, 2}, ..., \Gamma_{0, n} \subseteq \Gamma_0$ be mutually disjoint sets such that 
$$\abs{\Gamma_{0, 1}} = \abs{\Gamma_{0, 2}} = \cdots = \abs{\Gamma_{0, n}}.$$ 
Let $e: X\to [0, 1]$ be a continuous function such that $e^{-1}((0, 1]) \subseteq B$. Set $$e_1 : = \sum_{\gamma\in\Gamma_{0, 1}}u^*_\gamma e u_\gamma,\ ... ,\  e_n := \sum_{\gamma\in\Gamma_{0, n}}u^*_\gamma e u_\gamma.$$ Then there is an order zero map $\phi: \mathrm{M}_{n}(\Comp) \to A$ such that
$$\phi(e_{i, i}) = e_i,\quad i=1, ..., n.$$
\end{lem}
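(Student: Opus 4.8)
This lemma merely records the construction developed in the paragraphs immediately preceding its statement, so the plan is to assemble those computations and feed the resulting elements into Lemma \ref{dd-0}. Throughout I would assume $\Gamma_{0,1}\neq\varnothing$ (otherwise every $e_i$ vanishes and the zero map does the job).

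First, for each $i=1,\ldots,n-1$ I would fix a bijection $\theta_i:\Gamma_{0,1}\to\Gamma_{0,i+1}$, available because all the $\Gamma_{0,j}$ have the same cardinality, and set
$$v_i:=\sum_{\gamma\in\Gamma_{0,1}}u^*_{\theta_i(\gamma)}e^{\frac12}u_\gamma.$$
The computation already displayed for a disjoint pair $F_1,F_2\subseteq\Gamma_0$ then yields $v_i^*v_i=\sum_{\gamma\in\Gamma_{0,1}}u^*_\gamma e u_\gamma=e_1$ and $v_iv_i^*=\sum_{\gamma\in\Gamma_{0,i+1}}u^*_\gamma e u_\gamma=e_{i+1}$. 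The one point worth isolating is why the off-diagonal terms disappear: the support of $u^*_\gamma e u_\gamma=e\circ\gamma^{-1}$ is contained in $B\gamma$, and since $(B,\Gamma_0)$ is a Rokhlin tower the translates $B\gamma$, $\gamma\in\Gamma_0$, are mutually disjoint; hence $(e^{\frac12}\circ\delta_1^{-1})(e^{\frac12}\circ\delta_2^{-1})=0$ whenever $\delta_1\neq\delta_2$ in $\Gamma_0$, which annihilates every cross term in $v_i^*v_i$ and $v_iv_i^*$.

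Next I would check the three hypotheses of Lemma \ref{dd-0} for $v_1,\ldots,v_{n-1}$: the elements $v_i^*v_i$ all equal $e_1$; the elements $e_1,e_2,\ldots,e_n$ are mutually orthogonal, again from disjointness of the $B\gamma$ together with the disjointness of $\Gamma_{0,1},\ldots,\Gamma_{0,n}$; and the normalization $\norm{e_1}=1$. For the last, note $\norm{e_1}=\norm{e}$ since the summands have disjoint supports, and handle the general case by rescaling: replace $e$ by $e/\norm{e}$ to arrange $\norm{e_1}=1$, apply Lemma \ref{dd-0}, and then multiply the resulting order zero map by the scalar $\norm{e}$, using that a positive scalar multiple of an order zero map is again order zero. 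This produces an order zero map $\mathrm{M}_n(\Comp)\to A$ sending $e_{0,0}\mapsto v_1^*v_1=e_1$ and $e_{i,i}\mapsto v_iv_i^*=e_{i+1}$ for $i=1,\ldots,n-1$; relabelling the matrix units $\{0,1,\ldots,n-1\}$ as $\{1,2,\ldots,n\}$ gives exactly $\phi(e_{i,i})=e_i$. I do not anticipate a genuine obstacle here, since the whole substance of the argument is already present in the excerpt; the only items needing care are the bookkeeping of the orthogonality coming from the Rokhlin condition and the harmless normalization and index shift above.
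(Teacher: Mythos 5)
Your proposal is correct and follows exactly the route the paper takes: the lemma is stated as a summary of the computation in the preceding paragraphs, namely building $v_i=\sum_{\gamma\in\Gamma_{0,1}}u^*_{\theta_i(\gamma)}e^{1/2}u_\gamma$ from bijections $\theta_i:\Gamma_{0,1}\to\Gamma_{0,i+1}$, using disjointness of the translates $B\gamma$ to kill the cross terms, and feeding $v_1,\dots,v_{n-1}$ into Lemma \ref{dd-0}. Your explicit handling of the normalization $\norm{e_1}=1$ and the index shift is a slightly more careful bookkeeping than the paper bothers with, but it is the same argument.
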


\subsection{Uniform Rokhlin Property and Cuntz-comparison of Open Sets}

\begin{defn}[\cite{Niu-MD-Z}]\label{Defn-URP}
A dynamical system $(X, \Gamma)$ is said to have the uniform Rokhlin property (URP) if for any finite set $\mathcal F$, and any $\eps>0$, there are open Rokhlin towers $(B_1, F_1)$, ..., $(B_S, F_S)$ such that $F_1, F_2, ..., F_S$ are $(\mathcal F, \eps)$-invariant, the sets  
$$B_s\gamma,\quad \gamma\in F_s, s=1, 2, ..., S$$
are mutually disjoint, and
$$\mu(X\setminus\bigsqcup_{s=1}^S\bigsqcup_{\gamma\in F_s}B_s\gamma) < \eps,\quad \mu\in\mathcal M_1(X, \Gamma).$$
\end{defn}

\begin{rem}
If $E\subseteq X$ is a closed subset, then $\mu(E) < \eps $ for all $\mu\in\mathcal{M}_1(X, \Gamma)$  if, and only if, the orbit capacity of $E$ is at most $\eps$ (see, for instance, \cite{Lindenstrauss-Weiss-MD}).
\end{rem}

\begin{defn}[\cite{Niu-MD-Z}]\label{Defn-COS}
A topological dynamical system $(X, \Gamma)$ is said to have ($\lambda, m$)-Cuntz-comparison of open sets, where $\lambda\in (0, +\infty)$ and $m\in\mathbb N$,  if, for any open sets $E, F \subseteq X$ with $$\mu(E) \leq \lambda \mu(F),\quad \mu\in\mathcal{M}_1(X, \Gamma),$$ one has $$\varphi_E \precsim \underbrace{\varphi_F\oplus\cdots\oplus \varphi_F}_m\quad\mathrm{in}\ \mathrm{M}_{\infty}(\mathrm{C}(X)\rtimes\Gamma).$$

A topological dynamical system is said to have Cuntz-comparison of open sets (COS) if it has ($\lambda, m$)-Cuntz-comparison of open sets for some $\lambda$ and $m$.
\end{defn}

\begin{thm}[Theorem 4.2 and Theorem 5.5 of \cite{Niu-MD-Zd}]
Any free and minimal dynamical system $(X, \Int^d)$ has the (URP) and (COS).
\end{thm}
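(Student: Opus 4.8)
Because the statement is quoted directly from Theorems~4.2 and~5.5 of \cite{Niu-MD-Zd}, the formal ``proof'' is to invoke those two results; what follows is the strategy I would use to establish the two properties directly for an arbitrary free minimal $\Int^d$-system.

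For the (URP), the plan is to manufacture the required open Rokhlin towers from a dynamical tiling of the orbits. Since $\Int^d$ is amenable and exactly tileable by rectangular boxes, I would first fix $(\mathcal F, \eps)$-invariant boxes as candidate shapes $F_1, \dots, F_S$. The substantive step is to realize such a tiling \emph{dynamically and continuously}: using freeness and minimality, produce along each orbit a syndetic, suitably separated and dense marker set, varying continuously in $x$, and Voronoi-tile each orbit by the induced cells, which are approximately the prescribed F{\o}lner boxes. The bases $B_s$ are then the open sets on which a given shape is read off, and the $(\mathcal F, \eps)$-invariance is inherited from the geometry of the boxes. The last ingredient --- that $\mu(X \setminus \bigsqcup_{s,\gamma} B_s\gamma) < \eps$ \emph{uniformly} over $\mu \in \mathcal M_1(X, \Gamma)$ --- follows because the F{\o}lner tiling covers all but an $\eps$-fraction of each orbit and, the tiles being invariant shapes, this density transfers to a uniform measure bound on the uncovered ``seam'' set.

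For the (COS), suppose $\mu(E) \le \lambda\mu(F)$ for every $\mu \in \mathcal M_1(X,\Gamma)$. The plan is to reduce the Cuntz comparison $\varphi_E \precsim \varphi_F \oplus \cdots \oplus \varphi_F$ to a matching problem inside the towers provided by the (URP). Refining $X$ by the tower structure, the uniform measure inequality becomes, up to a controllable boundary error, a counting inequality: in each tower the number of levels meeting $E$ is bounded by a multiple of the number meeting $F$. A Hall marriage argument then yields an injection --- at the cost of a multiplicity $m$ absorbing the $d$-dimensional boundary and overlap losses --- of the $E$-levels into the $F$-levels, and the associated partial isometries of the form $u^*_{\gamma_2} e^{\frac{1}{2}} u_{\gamma_1}$ in the crossed product assemble into the element witnessing $\varphi_E \precsim \varphi_F^{\oplus m}$. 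Lemma~\ref{rordom-lem} is used throughout to move between the exact relation and its cut-down ($f_\eps$) approximations so that the estimates stay finite.

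The main obstacle is the (COS), not the (URP). The delicate points are, first, upgrading the measure inequality --- which is a statement over the whole simplex $\mathcal M_1(X,\Gamma)$ --- to a single combinatorial matching valid on all towers simultaneously and uniformly in the boundary error, and second, bookkeeping how the dimension $d$ forces a genuine multiplicity $m$ and a comparison constant $\lambda$ through the seam estimates. By contrast, once the amenable tiling machinery of \cite{DHZ-tiling} and the continuous marker construction are in place, the (URP) is a (by now standard) Rokhlin-tower argument.
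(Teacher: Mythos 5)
Your proof coincides with the paper's: the theorem is established there solely by invoking Theorems 4.2 and 5.5 of \cite{Niu-MD-Zd}, exactly as you do, so the formal argument is identical. The additional strategy sketch (markers/tilings for the (URP), a tower-level counting and matching argument for the (COS)) is not contained in this paper and so cannot be checked against it, but it is consistent with the approach taken in the cited reference.
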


\section{Some lemmas}

In this section, we shall develop some lemmas on dimension drop C*-algebras and order zero c.p.c.~maps with domain a matrix algebra. Let us start with a simple observation: 

\begin{lem}\label{one-lem}
Let $a, c$ be elements of a unital C*-algebra, and assume that $ac=c$ and $a$ is self-adjoint.  Then, 
$$f(a)c = f(1)c,\quad f\in\mathrm{C}([-\norm{a}, \norm{a}]),$$
where $f(a)$ denotes the continuous functional calculus acting on $a$ using $f$.
\end{lem}
\begin{proof}

If $f(t) = \sum_{k=0}^n c_kt^k$, then $$f(a)c = (\sum_{k=0}^n c_ka^k)c = \sum_{k=0}^n c_ka^kc = \sum_{k=0}^n c_kc  = (\sum_{k=0}^n c_k)c = f(1)c. $$ The general statement follows from the Weierstrass Theorem and Spectral Theorem.
\end{proof}

It is well known that the universal unital C*-algebra generated by $v$ with respect to relations
$$vv^* \perp v^*v\quad \mathrm{and}\quad \norm{vv^*} \leq 1$$ is
$$D=\{f: [0, 1] \to \mathrm{M}_2(\Comp): f(0) \in \Comp 1_2\},$$
with $v$ corresponding to 
$$[0, 1] \ni t \mapsto \left( \begin{array}{cc} 0 & \sqrt{t} \\ 0 & 0 \end{array} \right).$$ Using this identification, one has the following statement:

\begin{lem}\label{switch}
Let $A$ be a unital C*-algebra, and let $v\in A$. Consider $a=vv^*$ and $b=v^*v$, and assume that $\norm{a} \leq 1$ and $a\perp b$. 
Define 
$$w = \cos(\frac{\pi}{2}(vv^*+v^*v)) + g(vv^*)v - g(v^*v)v^*,$$
where 
$$g(t)=\frac{\sin (\frac{\pi}{2}t)}{\sqrt{t}},\quad t\in(0, 1].$$
Then $w\in \mathrm{C}^*\{ v, 1_A\}$ is a unitary such that, 
\begin{equation}
\textrm{if $ac = ca =c$, then $b(w^*cw) = (w^*cw)b = w^*cw$},
\end{equation}
and,
\begin{equation}
\textrm{if $bc = cb =c$, then $a(wcw^*) = (wcw^*)a = wcw^*$}.
\end{equation}
\end{lem}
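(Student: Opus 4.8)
The plan is to run everything through the universal picture $D$ set up just above the statement, supplemented by repeated application of Lemma \ref{one-lem}.

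\emph{Unitarity.} Since $a=vv^*\perp v^*v=b$ and $\norm{a}\le 1$, the pair $(v,1_A)$ satisfies the defining relations of $D$, so by universality there is a unital $*$-homomorphism $\Phi\colon D\to \mathrm{C}^*\{v,1_A\}\subseteq A$ carrying the canonical generator to $v$. Because $*$-homomorphisms respect sums, products, adjoints and continuous functional calculus (note $g$ extends continuously to $[0,1]$ with $g(0)=0$, and $\mathrm{sp}(a),\mathrm{sp}(b)\subseteq[0,1]$ as $\norm{b}=\norm{a}\le1$), the element $w$ is exactly $\Phi$ applied to the element $\mathbf w\in D$ given fibrewise by
$$\mathbf w(t)=\begin{pmatrix}\cos(\tfrac\pi2 t)&\sin(\tfrac\pi2 t)\\[2pt]-\sin(\tfrac\pi2 t)&\cos(\tfrac\pi2 t)\end{pmatrix};$$
indeed $g$ is chosen precisely so that the terms $g(a)v$ and $g(b)v^*$ supply the off-diagonal sine entries. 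Since $\mathbf w(t)$ is a rotation, hence unitary, for each $t$, and $\mathbf w(0)=1_2\in\Comp 1_2$, the element $\mathbf w$ is a unitary of $D$; applying the unital homomorphism $\Phi$ shows $w\in\mathrm{C}^*\{v,1_A\}$ is a unitary.

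\emph{The first conjugation identity.} Assume $ac=ca=c$. First I would record the consequences of the hypotheses: from $a\perp b$ we have $ab=ba=0$, so $bc=b(ac)=(ba)c=0$ and likewise $cb=0$; then $v^*vc=bc=0$ gives $(vc)^*(vc)=c^*(v^*vc)=0$, whence $vc=0$. Moreover $(a+b)c=c$ and $c(a+b)=c$. Now I would compute $w^*c$ and $cw$ term by term, discarding the vanishing pieces via Lemma \ref{one-lem}: from $(a+b)c=c$ one gets $\cos(\tfrac\pi2(a+b))c=\cos(\tfrac\pi2)c=0$, from $vc=0$ one gets $g(a)vc=0$, and only $g(b)v^*c$ survives, so $w^*c=g(b)v^*c$; symmetrically (using $cg(a)=g(1)c=c$, $cg(b)=g(0)c=0$, and $c\cos(\tfrac\pi2(a+b))=0$) one finds $cw=cv$. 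Combining these yields
$$w^*cw=g(b)\,v^*cv.$$

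\emph{Reduction to $v^*cv$.} The crux is that $v^*cv$ already lies in the part of $A$ where $b$ acts as a unit: using $a=vv^*$ and $ac=ca=c$,
$$b(v^*cv)=v^*(vv^*)cv=v^*(ac)v=v^*cv,\qquad (v^*cv)b=v^*c(vv^*)v=v^*(ca)v=v^*cv.$$
In particular $b(v^*cv)=v^*cv$, so Lemma \ref{one-lem} (with $g(1)=1$) gives $g(b)(v^*cv)=v^*cv$, and therefore $w^*cw=v^*cv$. The two displays then immediately give $b(w^*cw)=(w^*cw)b=w^*cw$. The second stated identity follows by the symmetric computation, interchanging $v\leftrightarrow v^*$ and $a\leftrightarrow b$ (under which $w\mapsto w^*$): for $c$ with $bc=cb=c$ one obtains $wcw^*=vcv^*$ and $a(vcv^*)=(vcv^*)a=vcv^*$.

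\emph{Main obstacle.} The real content is neither the unitarity (immediate from $D$) nor the final manipulation, but the simplification $w^*cw=v^*cv$: one must see that the absorption hypotheses $ac=ca=c$ collapse almost every term generated by the rotation $w$, leaving only the elementary conjugation by $v$. The delicate points are the bookkeeping of which one-sided products vanish and the careful use of both the left- and right-handed forms of Lemma \ref{one-lem}.
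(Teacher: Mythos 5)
Your proof is correct and follows essentially the same route as the paper's: the conjugation identities are obtained by the same term-by-term expansion, killing terms via Lemma \ref{one-lem} and the orthogonality $ab=0$, and reducing $w^*cw$ to $v^*cv$. The only cosmetic difference is that you read off unitarity of $w$ from the fibrewise rotation matrix in the universal algebra $D$ (the identification the paper sets up immediately before the lemma), whereas the paper expands $ww^*$ and $w^*w$ directly; both are fine.
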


\begin{proof}
Noting that $vv^*+v^*v$ is central in $\mathrm{C}^*(1, v)$, $vv^*\perp v^*v$, and $$v^*h(vv^*) = h(v^*v)v^*,\quad h\in\mathrm{C}_0((0, 1]),$$ 
one has
\begin{eqnarray*} 
ww^* & = & (\cos(\frac{\pi}{2}(vv^*+v^*v)) + g(vv^*)v - g(v^*v)v^*)\\
&& (\cos(\frac{\pi}{2}(vv^*+v^*v)) + v^*g(vv^*) - vg(v^*v))  \\
& = & \cos^2(\frac{\pi}{2}(vv^*+v^*v))  +  g^2(vv^*)v v^* + g^2(v^*v)v^*v\\
& = & \cos^2(\frac{\pi}{2}(vv^*+v^*v)) + \sin^2(\frac{\pi}{2}v^*v) + \sin^2(\frac{\pi}{2}vv^*)  \\
& = & \cos^2(\frac{\pi}{2}(vv^*+v^*v)) + \sin^2(\frac{\pi}{2}(v^*v+vv^*)) \\
& = &  1.
\end{eqnarray*}
The same calculation shows that $w^*w=1$, and so $w$ is unitary.

Let $c\in A$ be an element satisfying $$ac=ca =c.$$ Note that, 
for any $f\in\mathrm{C}([0, 1])$, by Lemma \ref{one-lem}, one has
\begin{equation}\label{identity}
f(a)c = f(1)c = cf(a).
\end{equation} 
Therefore,
\begin{eqnarray*}
w^*cw & = &  (\cos(\frac{\pi}{2}(vv^*+v^*v)) + v^*g(vv^*) - vg(v^*v)) c \\
&&(\cos(\frac{\pi}{2}(vv^*+v^*v)) + g(vv^*)v - g(v^*v)v^*) \\
& = & (\cos(\frac{\pi}{2}(a)) + v^*g(vv^*)) c (\cos(\frac{\pi}{2}(a)) + g(vv^*)v)\\
& = & v^*g(vv^*) c g(vv^*)v \\
& = &v^*cv,
\end{eqnarray*}
and hence
\begin{equation*}
(w^*cw)b = v^*cvb = v^*cvv^*v = v^*cav = v^*cv = w^*cw
\end{equation*}
and
\begin{equation*}
b(w^*cw) = bv^*cv = v^*vv^*cv = v^*acv = v^*cv = w^*cw.
\end{equation*}

A similar calculation shows that 
$$a(wcw^*) = (wcw^*)a = wcw^*$$
if $bc = cb =c$.
\end{proof}

The following lemma is crucial in the proof of Proposition \ref{prop-ab-tsr1}, in which it provides an element that behaves in the same way as a lower triangular matrix.
\begin{lem}\label{n-shifting}
Let $A$ be a unital C*-algebra, and let $v_1, v_2, ..., v_n\in A$ be elements satisfying
\begin{itemize}
\item $v^*_1v_1 = v_2^*v_2 = \cdots = v_n^*v_n$,
\item $v^*_1v_1, v_1v_1^*, v_2v_2^*, ..., v_nv_n^*$ are mutually orthogonal, and
\item $\norm{v^*_1v_1} = 1$.
\end{itemize}
Then there is a unitary $w\in A$ with the following properties:

\begin{enumerate}

\item\label{plus-1-shifting} If $E_1, E_2, ..., E_n\in A$ are mutually orthogonal positive elements such that
$$[v_1^*v_1, E_i] = 0\quad\mathrm{and}\quad (v_iv_i^*) E_{i} = v_iv_i^*,\quad i=1, 2, ..., n,$$ 
then $$wE_i\in \overline{(E_1 + E_i + E_{i+1})A},\quad 1\leq i\leq n-1.$$

\item\label{keep-zero} If $D\subseteq A$ is a hereditary sub-C*-algebra such that $$v_iv_i^* \in D,\quad i=1, 2, ..., n,$$ 
and $d\in D$ is an element such that $$[d, v_1^*v_1] =0\quad\textrm{and}\quad v_n^* d = 0,$$ then $$wd\in\overline{DA}.$$

\item\label{squeez-zero} If $c\in A^+$ satisfies $$(v^*_1v_1)c=c,$$ then $$wc \in \overline{(v_1v_1^*)A}.$$

\end{enumerate}
\end{lem}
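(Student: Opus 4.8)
The plan is to realize $w$ as a product of the corner-flip unitaries produced by Lemma~\ref{switch}. Write $b := v_1^*v_1 = \cdots = v_n^*v_n$ and $a_j := v_jv_j^*$. By hypothesis $b, a_1, \ldots, a_n$ are mutually orthogonal and $\norm{a_j} = \norm{b} = 1$, so each pair $(v_j, b)$ satisfies the hypotheses of Lemma~\ref{switch}; let $w_j \in \mathrm{C}^*\{v_j, 1_A\}$ be the resulting unitary and set
$$ w := w_nw_{n-1}\cdots w_1. $$
Passing to $A^{**}$, denote by $p, q_1, \ldots, q_n$ the support projections of $b, a_1, \ldots, a_n$; these are mutually orthogonal, and I put $\Pi := p + q_1 + \cdots + q_n$. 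The content of Lemma~\ref{switch} is that $\mathrm{Ad}_{w_j}$ sends the $b$-corner to the $a_j$-corner and the $a_j$-corner to the $b$-corner, while fixing every element orthogonal to both $a_j$ and $b$; in support-projection terms it acts on $\{p, q_1, \ldots, q_n\}$ as the transposition interchanging $p$ and $q_j$, fixes each $q_k$ with $k\neq j$, and fixes $1-\Pi$ pointwise.

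The first computation I would carry out is the corner bookkeeping. Composing the transpositions (the rightmost factor $w_1$ acting first), $\mathrm{Ad}_w$ realizes the cyclic permutation
$$ p \longmapsto q_1 \longmapsto q_2 \longmapsto \cdots \longmapsto q_n \longmapsto p, $$
and fixes $1-\Pi$. Equivalently, $w$ commutes with $\Pi$, acts as the identity on $(1-\Pi)A^{**}(1-\Pi)$, and restricts to the cyclic-shift unitary on the corner region $\Pi A^{**}\Pi$. Throughout I use that for a positive $x$ with support projection $s(x)$ one has $\overline{xA} = \{y : s(x)y = y\}$, and that $s(wy) = w\,s(y)\,w^*$ for a unitary $w$.

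With this, \eqref{squeez-zero} and \eqref{plus-1-shifting} follow at once. For \eqref{squeez-zero}: $(v_1^*v_1)c = c$ with $c\geq 0$ forces $s(c)\leq p$ (on the support of $c$ one has $b=1$), so $s(wc) = w\,s(c)\,w^* \leq wpw^* = q_1$, giving $wc \in \overline{q_1A} = \overline{(v_1v_1^*)A}$. For \eqref{plus-1-shifting}: from $a_iE_i = a_i$, the mutual orthogonality of the $E_i$, and $[b,E_i]=0$, the projection $r_i := s(E_i)$ satisfies $q_i \leq r_i$, $[r_i,p]=0$, and $a_jE_i = a_jE_jE_i = 0$ hence $r_i\perp q_j$ for $j\neq i$. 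Decomposing $r_i = r_ip + q_i + r_i''$ with $r_i'' \perp \Pi$ and conjugating each summand, $r_ip\leq p$ maps into $q_1\leq r_1$, the summand $q_i$ maps into $q_{i+1}\leq r_{i+1}$ (here $i\leq n-1$), and $r_i''$ is fixed; thus $w r_i w^* \leq r_1 + r_i + r_{i+1} = s(E_1+E_i+E_{i+1})$, which yields $wE_i \in \overline{(E_1+E_i+E_{i+1})A}$. The summand $E_1$ is exactly what absorbs the $b$-corner part of $E_i$, which the shift carries onto $q_1$.

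Property \eqref{keep-zero} is the delicate point and I expect it to be the main obstacle. The hypothesis $v_n^*d = 0$ is equivalent to $q_nd = 0$, i.e. $e_0 := s_L(d)$ is orthogonal to $q_n$; together with $[d,b]=0$ this gives $[e_0,p]=0$, and $d\in D$ gives $e_0 \leq P_D$, where $P_D$ is the open projection of $D$ (so $q_j\leq P_D$ for all $j$). Since the unique corner whose $\mathrm{Ad}_w$-image is the $b$-corner $p$ is $q_n$, and $e_0\perp q_n$, the conjugate carries no $p$-content: $p(we_0w^*)p = w q_n e_0 q_n w^* = 0$. The task is then to upgrade this to the full inclusion $we_0w^*\leq P_D$, i.e. $wd\in\overline{DA}$. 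The difficulty is genuine: because $D$ need not contain $b$ (so possibly $p\not\leq P_D$) and $e_0$ need not commute with $q_1,\ldots,q_{n-1}$, one cannot simply split $e_0$ into corner pieces, and one must control the off-diagonal blocks of $we_0w^*$ relative to $\Pi$. The route I would take is the block decomposition $w = w|_\Pi \oplus (1-\Pi)$ afforded by $[w,\Pi]=0$: the part of $e_0$ living in $1-\Pi$ is fixed by $\mathrm{Ad}_w$ and stays below $P_D$, while the part meeting $\Pi$ is, thanks to $e_0\perp q_n$, supported on $p+q_1+\cdots+q_{n-1}$ and is therefore shifted into $q_1+\cdots+q_n \leq P_D$ rather than onto the forbidden corner $p$. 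Making this splitting precise in the presence of the off-diagonal terms is where the care is required.
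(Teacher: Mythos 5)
Your construction of $w$ as the product $w_nw_{n-1}\cdots w_1$ of the unitaries from Lemma \ref{switch} does not work: the ``corner bookkeeping'' on which the whole argument rests is false. Lemma \ref{switch} only asserts that conjugation by $w_j$ carries an element $c$ with $bc=cb=c$ (i.e.\ supported under the spectral projection $\chi_{\{1\}}(b)$) into the $a_j$-corner, and symmetrically; it says nothing about elements supported under $p=s(b)$ or $q_j=s(a_j)$ in the region where $0<b<1$ or $0<a_j<1$. In the universal model for $\mathrm{C}^*\{v_j,1_A\}$ the unitary $w_j$ is, at the fibre over $t$, a rotation by angle $\tfrac{\pi}{2}t$ in the $(p,q_j)$-plane: it is a genuine transposition of corners only at $t=1$ and smears the corners for every $t<1$. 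The elements $E_i$ in (1) and $d$ in (2) are only assumed to dominate $v_iv_i^*$, respectively to lie in $D$; they are not supported at the top of the spectrum of $v_1^*v_1$, so the transposition picture does not apply to them. (Your argument for (3) does survive, precisely because $(v_1^*v_1)c=c$ forces $s(c)\le\chi_{\{1\}}(v_1^*v_1)$.)

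This is not merely a gap to be filled with more care: the proposed $w$ actually fails conclusions (1) and (2). Take $n=2$, $A=\mathrm{M}_3(\mathrm{C}([0,1]))$, $v_1(t)=\sqrt{t}\,e_{1,0}$, $v_2(t)=\sqrt{t}\,e_{2,0}$, and the constant functions $E_1=e_{1,1}$, $E_2=e_{2,2}$, which satisfy all hypotheses of (1). With $\theta=\tfrac{\pi}{2}t$ one computes
\begin{equation*}
w_2w_1\;=\;\begin{pmatrix} \cos^2\theta & -\cos\theta\sin\theta & -\sin\theta\\ \sin\theta & \cos\theta & 0\\ \sin\theta\cos\theta & -\sin^2\theta & \cos\theta\end{pmatrix},
\end{equation*}
so $wE_1$ has the nonzero $(0,1)$-entry $-\cos\theta\sin\theta$ for $t\in(0,1)$ and hence $wE_1\notin\overline{(E_1+E_1+E_2)A}=\overline{(e_{1,1}+e_{2,2})A}$; the same element violates (2) with $D=\mathrm{Her}(e_{1,1}+e_{2,2})$ and $d=e_{1,1}$, and reordering the factors does not help. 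The paper avoids this by \emph{not} taking a product of switch unitaries: it defines $w$ as an explicit path of unitaries in the dimension-drop algebra $D_{n+1}+\Comp 1$ chosen so that $w-1$ decomposes as $\sum_{i} g_{i,i}(w_{i,i})+\sum_{i} g_{i,i-1}(w_{i,i})w_{i,i-1}+\sum_{i} g_{i,n}(w_{i,i})w_{i,n}$, i.e.\ its off-diagonal part is confined to the subdiagonal corners $(i,i-1)$ and the last column $(i,n)$ at \emph{every} fibre $t$, not just at $t=1$. That structural constraint is exactly what makes (1)--(3) direct computations, and it is what your candidate lacks.
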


\begin{proof}

Consider the universal C*-algebra $\mathcal A$ generated by $v_1, v_2, ..., v_n$ with respect to the relations
\begin{itemize}
\item $v^*_1v_1 = v_2^*v_2 = \cdots = v_n^*v_n$,
\item $v_1^*v_1, v_1v_1^*, v_2v_2^*, ..., v_nv_n^*$ are mutually orthogonal,
\item $\norm{v^*_1v_1} = 1$.
\end{itemize}
It is well known (see \cite{Lor-Mon}) that $\mathcal A$ is isomorphic to the dimension drop C*-algebra
\begin{equation}\label{dim-drop}
D_{n+1}:=\{f\in\mathrm{C}([0, 1], \mathrm{M}_{n+1}): f(0) = 0_{n+1}\}\cong \mathrm{C}_0((0, 1])\otimes\M{n+1}{\Comp}
\end{equation}
with $$v_i(t) = \sqrt{t} \otimes e_{i, 0},\quad t\in[0, 1],\ i=1, 2, ..., n, $$
where $e_{i, j}$, $i, j = 0, 1, ..., n$, are the canonical matrix units of $\mathrm{M}_{n+1}(\Comp)$.

With the identification \eqref{dim-drop}, consider the unitary $w\in D_{n+1}+\Comp 1$ defined by
\begin{equation*}
w(t) = \left\{\begin{array}{ll} 
1_{n+1}, & \textrm{if $t=0$}, \\
\\
\left(\begin{array}{cccccc}
 1_{n-k}  & & & & &  \\
 & \cos\frac{\pi}{2}n(t-\frac{k-1}{n}) & 0& 0&\cdots & \sin\frac{\pi}{2}n(t-\frac{k-1}{n}) \\
 & -\sin\frac{\pi}{2}n(t-\frac{k-1}{n})& 0 & 0 & \cdots &  \cos\frac{\pi}{2}n(t-\frac{k-1}{n}) \\
 & & -1& 0 &\cdots &  0 \\
 & & & \ddots & \ddots&  \vdots \\
 & & & & -1& 0
\end{array}\right), & \textrm{if $t\in [\frac{k-1}{n}, \frac{k}{n}]$.}
\end{array}\right.
\end{equation*}
Then the image of $w$ in $A$, which will still be denoted by $w$, has the properties of the lemma.

Indeed, put
$$ w_{i, j} = v_iv^*_j,\quad i, j=1,  ..., n,$$
and
$$w_{0, 0} = v_1^*v_1,\quad w_{i, 0} = v_i,\quad w_{0, j} = v_j^*,\quad i, j=1, 2, ..., n.$$
Then
\begin{equation*}
w=1+ \sum_{i=0}^n g_{i, i}(w_{i, i}) + \sum_{i=1}^n g_{i, i-1}(w_{i, i})w_{i, i-1} + \sum_{i=0}^{n-1} g_{i, n}(w_{i, i})w_{i, n},
\end{equation*}
for some functions $g_{i, j} \in \mathrm{C}_0((0, 1])$. Note that 
\begin{equation}\label{g-condition}
g_{0, 0}(1) = -1.
\end{equation}

Let $E_1, E_2, ..., E_n$ be mutually orthogonal positive elements of $A$ satisfying 
\begin{equation}\label{E-over}
w_{i, i} E_{i} = w_{i, i},\quad i=1, 2, ..., n,
\end{equation}
and 
\begin{equation}\label{E-comm}
[w_{0, 0}, E_i] = 0.
\end{equation}
Note that, by \eqref{E-over} and polar decomposition, one has
$$v^*_jE_i = v_j' (v_jv^*_j)^{\frac{1}{2}}E_i = v_j' w_{j, j}^{\frac{1}{2}}E_i  = v_j' w_{j, j}^{\frac{1}{2}}E_j E_i = 0,\quad j\neq i, \ 1\leq i, j \leq n,$$ where $v_j'$ is a partial isometry in the bidual.  
Then, for any $1\leq i\leq n-1$, 
$$w_{i_1, i_2} E_i = v_{i_1}v_{i_2}^*E_i = 0,\quad i_2\neq i,\  1 \leq i_1, i_2\leq n,$$
and hence  
\begin{eqnarray*}
wE_i & = & (1+ \sum_{j=0}^n g_{j, j}(w_{j, j}) + \sum_{j=1}^n g_{j, j-1}(w_{j, j})w_{j, j-1} + \sum_{j=0}^{n-1} g_{j, n}(w_{j, j})w_{j, n}) E_i \\
& = & E_i + g_{0, 0}(w_{0, 0}) E_i + g_{i, i}(w_{i, i})E_i + g_{1, 0}(w_{1, 1})w_{1, 0} E_i + g_{i+1, i}(w_{i+1, i+1})w_{i+1, i} E_i. 
\end{eqnarray*}
By \eqref{E-comm}, $$g_{0, 0}(w_{0, 0})E_i =E_i g_{0, 0}(w_{0, 0})\in E_iA.$$
By \eqref{E-over}, $$g_{i, i}(w_{i, i}) E_i = E_i g_{i, i}(w_{i, i})\in E_iA,$$
$$g_{1, 0}(w_{1, 1})w_{1, 0} E_i = E_{1}g_{1, 0}(w_{1, 1})w_{1, 0} E_i \in E_1A,$$
and
$$g_{i+1, i}(w_{i+1, i+1})w_{i+1, i} E_i = E_{i+1}g_{i+1, i}(w_{i+1, i+1})w_{i+1, i} E_i \in E_{i+1}A.$$
Therefore,
$$wE \in  \overline{(E_1+E_i + E_{i+1})A},\quad 1\leq i\leq n-1,$$ as required for Property \eqref{plus-1-shifting}.

For Property \eqref{keep-zero}, let $D$ be a hereditary sub-C*-algebra such that $$w_{i, i} \in D,\quad i=1, 2, ..., n,$$ 
and let $d\in D$ be an element satisfying $$[d, w_{0, 0}] =0\quad\mathrm{and}\quad w_{0, n} d = 0.$$ Then 
\begin{eqnarray*}
wd & = & (1+ \sum_{i=0}^n g_{i, i}(w_{i, i}) + \sum_{i=1}^n g_{i, i-1}(w_{i, i})w_{i, i-1} + \sum_{i=0}^{n-1} g_{i, n}(w_{i, i})w_{i, n}) d\\
& = & d+ g_{0, 0}(w_{0, 0})d +\sum_{i=1}^n g_{i, i}(w_{i, i})d + \sum_{i=1}^n g_{i, i-1}(w_{i, i})w_{i, i-1}d + \sum_{i=1}^{n-1} g_{i, n}(w_{i, i})w_{i, n}d \\
& \in & \overline{DA}.
\end{eqnarray*}

For Property \eqref{squeez-zero}, let $c \in A$ be a positive element such that $$w_{0, 0}c = c.$$ Then, by Lemma \ref{one-lem} and \eqref{g-condition}, one has
\begin{eqnarray*}
wc & = & (1+ \sum_{i=0}^n g_{i, i}(w_{i, i}) + \sum_{i=1}^n g_{i, i-1}(w_{i, i})w_{i, i-1} + \sum_{i=0}^{n-1} g_{i, n}(w_{i, i})w_{i, n}) c \\
& = & c + g_{0, 0}(w_{0, 0})c + g_{1, 0}(w_{1, 1})w_{1, 0}c \\
& = &(1+g_{0, 0}(1))c+ g_{1, 0}(w_{1, 1})w_{1, 0}c \\
& = & g_{1, 0}(w_{1, 1})w_{1, 0} c \in \overline{w_{1, 1} A} = \overline{v_1v_1^*A},
\end{eqnarray*}
as required.
\end{proof}

\begin{lem}\label{p-unitary}
Let $A$ be a unital C*-algebra, and let $\phi: \mathrm{M}_n(\Comp)\to A$ be an extendable order zero c.p.c.~map. Denote by $e_i=\phi(e_{i, i})$ and $h=\phi(1_n)$. Then, for any permutation $\sigma: \{1, 2, ..., n\} \to \{1, 2, ..., n\}$, there is a unitary $u\in A$ such that $$[u, h] = 0\quad\mathrm{and}\quad u^*e_iu = e_{\sigma(i)},\quad i=1, 2, ..., n.$$ 
\end{lem}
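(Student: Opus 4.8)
The plan is to exploit the concrete structure of an order zero map together with the extra room supplied by extendability. I may assume $\phi\neq 0$, since otherwise all $e_i$ and $h$ vanish and $u=1_A$ works. Write $\phi=f_\delta(\psi')$ for a c.p.\ order zero map $\psi'$ and some $\delta>0$, put $h'=\psi'(1_n)$ and $T=\norm{h'}$, and apply the structure theorem for order zero maps recalled before Lemma~\ref{dd-0} to $\psi'$: identifying $C':=\mathrm{C}^*(\psi'(\mathrm{M}_n))\cong\mathrm{M}_n(\mathrm{C}_0((0,T]))$ so that $\psi'(a)$ corresponds to the function $t\mapsto t\,a$, the element $h=\phi(1_n)=f_\delta(h')$ corresponds to $t\mapsto f_\delta(t)1_n$ and $e_i=\phi(e_{i,i})$ corresponds to $t\mapsto f_\delta(t)e_{i,i}$. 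Since $\phi\neq 0$ we have $T>\delta/2$, and since $f_\delta$ vanishes on $[0,\delta/2]$, on this interval the functions representing $h$ and all $e_i$ are identically zero; this cleared fibre at $t=0$ is the crucial room provided by extendability.

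Next I would build the unitary as a path of permutation matrices. Let $P=\sum_{j} e_{j,\sigma(j)}\in\mathrm{M}_n(\Comp)$, so that $P^*e_{i,i}P=e_{\sigma(i),\sigma(i)}$. Since $\mathrm{U}(n)$ is path connected and $T>\delta/2$, I choose a continuous map $W:[0,T]\to\mathrm{U}(n)$ with $W(0)=1_n$ and $W(t)=P$ for all $t\in[\delta/2,T]$. Define $u\in C'+\Comp 1_A\subseteq A$ to be the element corresponding to $t\mapsto W(t)$; as $t\mapsto W(t)-1_n$ vanishes at $t=0$ it lies in $C'$, so indeed $u=1_A+(u-1_A)\in A$. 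Because $W(t)^*W(t)=1_n$ for all $t$, the element $u$ is a unitary of $A$, and because $h$ corresponds to the scalar-valued function $t\mapsto f_\delta(t)1_n$ it commutes with $u$, giving $[u,h]=0$.

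Finally I would verify the conjugation relation pointwise: $u^*e_iu$ corresponds to $t\mapsto f_\delta(t)\,W(t)^*e_{i,i}W(t)$. For $t\geq\delta/2$ this equals $f_\delta(t)P^*e_{i,i}P=f_\delta(t)e_{\sigma(i),\sigma(i)}$, while for $t<\delta/2$ it is $0=f_\delta(t)e_{\sigma(i),\sigma(i)}$; hence $u^*e_iu=e_{\sigma(i)}$ for every $i$. The only real obstacle is membership of $u$ in $A$: for a general c.p.c.\ order zero map the functions $e_i$ are supported on all of $(0,T]$, which would force $W\equiv P$ and hence $W(0)=P\notin\Comp 1_n$, so no such unitary of $A$ need exist. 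Extendability removes exactly this obstruction, since the cut-down by $f_\delta$ empties the fibre at $t=0$ and lets us deform $W$ down to the scalar $1_n$ there. (Alternatively one could factor $\sigma$ into transpositions and invoke Lemma~\ref{switch} repeatedly, but the direct path argument handles all of $\sigma$ at once.)
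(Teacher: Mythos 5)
Your proof is correct and follows essentially the same route as the paper: both identify $\mathrm{C}^*(\psi'(\mathrm{M}_n))$ with (a quotient of) $\mathrm{M}_n(\mathrm{C}_0((0,T]))$, use the fact that the $f_\delta$-cut-down kills the fibres over $[0,\delta/2]$, and define the unitary via a path of unitary matrices running from $1_n$ at $t=0$ to the permutation matrix on $[\delta/2,T]$, living in $C'+\Comp 1_A$. Your additional remarks on why extendability is exactly the needed hypothesis are a sound elaboration of the same argument, not a different one.
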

\begin{proof}
Since $\phi$ is extendable, there is an order zero map $\tilde{\phi}: \mathrm{M}_n(\Comp) \to A$ and $\delta>0$ such that $\phi=f_{\delta}(\tilde{\phi})$. Note that 
$$C:=\mathrm{C}^*\{\tilde{\phi}(\mathrm{M}_n(\Comp))\} \cong \{f:[0, 1]\to \mathrm{M}_n(\Comp): f(0) = 0_n\} \cong \mathrm{C}_0((0, 1])\otimes \mathrm{M}_n(\Comp)$$ with $e_i = f_\delta \otimes e_{i, i}$, $i=1, 2, ..., n$, under the isomorphism. Denote by $U\in \mathrm{M}_n(\Comp)$ the permutation unitary matrix such that $$U^*e_{i, i}U = e_{\sigma(i), \sigma(i)},\quad i=1, 2, ..., n.$$ Since the unitary group of $\mathrm{M}_n(\Comp)$ is path connected, there is a continuous path of unitaries $$[0, \frac{\delta}{2}]\ni t\mapsto U_t\in\M{n}{\Comp}$$ such that $U_0=1_n$ and $U_{\delta/2} = U$. Set
$$u: t\mapsto u(t) = \left\{\begin{array}{ll}
U_t, & t\in[0, \frac{\delta}{2}], \\
U, & t\in [\frac{\delta}{2}, 1].
\end{array}\right.$$
Then the unitary $u\in C+ \Comp 1_A\subseteq A$ has the desired property.
\end{proof}

\section{Nilpotent elements, order zero maps, and limits of invertible elements}


An element $a$ of a C*-algebra is said to be nilpotent if $a^n=0$ for some $n\in\mathbb N$. It is well known that if $a$ is nilpotent, then $a+\eps 1_A$ is invertible for any non-zero $\eps$; in fact,
\begin{equation}\label{nil-inv}
(a+\eps 1_A)^{-1} = \frac{1_A}{\eps} - \frac{a}{\eps^2} + \frac{a^2}{\eps^3} + \cdots + (-1)^{n-1}\frac{a^{n-1}}{\eps^{n}} + \cdots,
\end{equation}
where the infinite series is eventually zero, as $a$ is nilpotent. Hence any nilpotent element is in the closure of invertible elements.

The following lemma is a modified version of Lemma 4.2 of \cite{A-NCP-LAlg}.
\begin{lem}[c.f. Lemma 4.2 of \cite{A-NCP-LAlg}]\label{invt-approx}
Let $A$ be a finite unital C*-algebra and let $a\in A$. Suppose that there exist positive elements $b_1, b_2, c_1, c_2$ such that
\begin{enumerate}
\item\label{unit-b} $b_1 + b_2 = 1_A$,
\item $c_1 + c_2 = 1_A$,
\item $\mathrm{C}^*\{b_1, b_2, c_1, c_2\}$ is commutative,
\item $b_1c_1 = c_1$,
\item $b_2c_2 = b_2$,
\item $c_1b_2 = 0$,
\item\label{nil-assumption} there are unitaries $u, v\in A$ such that 
$$u(b_2a(b_1-c_1)+b_2ab_2)v\quad\textrm{and}\quad u(b_2ab_2)v$$ are nilpotent,
\begin{equation}\label{in-Her}
v(u(b_2ab_2)v)^nu\in \overline{b_2Ab_2},\quad n=1, 2, ..., 
\end{equation}
and
$$c_1vub_2 = 0$$
\item\label{zero-left} $b_1a = 0$.
\end{enumerate}
Then $a\in \overline{\mathrm{GL}(A)}$
\end{lem}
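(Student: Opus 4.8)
The plan is to produce, for every $\epsilon,\delta>0$, an invertible element within $\epsilon+\delta$ of $a$. First I would extract the commutative geometry hiding in (1)--(6): since $\mathrm{C}^*\{b_1,b_2,c_1,c_2\}$ is commutative with $b_1+b_2=c_1+c_2=1_A$, relations (4)--(6) all reduce to the single support relation $b_2c_1=c_1b_2=0$ (viewing the generators as functions, $c_1\le b_1$ and $b_2\le c_2$). From hypothesis \eqref{zero-left}, $b_1a=0$ gives $a=b_2a$; combining with $c_1b_2=0$ yields $c_1a=c_1b_2a=0$, hence also $a=c_2a$. Using $b_2a=a$ I then rewrite the two nilpotents in \eqref{nil-assumption}: $b_2a(b_1-c_1)+b_2ab_2=a(b_1+b_2)-ac_1=ac_2$ and $b_2ab_2=ab_2$, so that $N_1:=u(ac_2)v$ and $N_2:=u(ab_2)v$ are nilpotent, say of order $k$.

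Next I would regularize the nilpotent corner. Put $S:=ab_2+\delta u^*v^*=u^*(N_2+\delta)v^*$; since $N_2$ is nilpotent, $N_2+\delta$ is invertible by \eqref{nil-inv}, so $S\in\mathrm{GL}(A)$. I take as candidate approximant
$$r:=a+\epsilon c_1+\delta u^*v^*=S+ab_1+\epsilon c_1,\qquad \norm{r-a}\le\epsilon+\delta.$$
Expanding via \eqref{nil-inv} gives $S^{-1}=v(N_2+\delta)^{-1}u=\tfrac1\delta vu+R$, where $R=\sum_{j=1}^{k-1}\tfrac{(-1)^j}{\delta^{j+1}}vN_2^ju$ lies in $\overline{b_2Ab_2}$ by hypothesis \eqref{in-Her}. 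Factoring $r=S(1+M)$ with $M:=S^{-1}(ab_1+\epsilon c_1)$, the problem reduces to showing that $1+M$ is invertible.

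The role of the remaining hypotheses is to annihilate the dangerous terms in $M$. From $R\in\overline{b_2Ab_2}$ and $c_1b_2=0$ I get $c_1R=0$, and from $c_1vub_2=0$ together with $a=b_2a$ I get $c_1vua=c_1vub_2a=0$; combined with $c_1a=0$ and $b_1a=0$, these identities force the $\tfrac1\delta vu$-contributions of $S^{-1}$ that are paired with the fill $c_1$ to collapse, leaving a strictly triangular remainder. This is exactly the operator analogue of the matrix factorization $\bigl(\begin{smallmatrix}0&0\\ z&n\end{smallmatrix}\bigr)=\bigl(\begin{smallmatrix}1&0\\ z/\epsilon&1\end{smallmatrix}\bigr)\bigl(\begin{smallmatrix}\epsilon&0\\0&n+\delta\end{smallmatrix}\bigr)$ for a strictly lower-triangular matrix with a regularizable nilpotent corner, where the second nilpotency $N_1$ is what controls the $ab_1$-block. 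Since $A$ is finite, hence directly finite, it suffices to exhibit a one-sided inverse of $1+M$, which is the convenient formulation. Letting $\epsilon,\delta\to0$ then gives $a\in\overline{\mathrm{GL}(A)}$.

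The main obstacle is precisely the invertibility of $1+M$: the regularizer $\delta u^*v^*$ is a global element rather than one supported in the corner $\overline{b_2Ab_2}$, so the naive block-triangular factorization fails and one must track the cross terms between the fill $\epsilon c_1$ and $S^{-1}$. The hypotheses $c_1vub_2=0$ and $v(u(b_2ab_2)v)^nu\in\overline{b_2Ab_2}$ are tailored exactly so that left multiplication by $c_1$ kills every term of $S^{-1}$ that carries an $a$, isolating a genuinely triangular, hence invertible, structure. Verifying this cancellation carefully, and invoking finiteness of $A$ to promote left-invertibility to invertibility, is the crux of the argument.
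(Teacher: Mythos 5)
Your setup is sound: the identities $a=b_2a$ and $c_1a=0$, the rewriting of the two nilpotents as $N_1=u(ac_2)v$ and $N_2=u(ab_2)v$, and the expansion $S^{-1}=\tfrac1\delta vu+R$ with $R\in\overline{b_2Ab_2}$ all check out, and regularizing by $\delta u^*v^*$ is exactly the paper's move (its approximant is precisely $a+\eps u^{-1}v^{-1}$). But the crux you defer does not go through for the factorization you chose. With $M=S^{-1}(ab_1+\eps c_1)$ one gets $M=\tfrac1\delta vu\,ab_1+R\,ab_1+\tfrac{\eps}{\delta}vuc_1$: the hypotheses do kill $Rc_1$ and $c_1vua$, but nothing kills $vuc_1$ --- the hypothesis $c_1vub_2=0$ controls $c_1vu$ against $b_2$ on the right, not $vu$ against $c_1$ on the right --- and $M^2$ contains the surviving term $\tfrac{\eps^2}{\delta}S^{-1}(c_1vuc_1)$, which is likewise not annihilated. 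So $M$ is not nilpotent, the claimed ``strictly triangular remainder'' does not materialize, and $1+M$ (equivalently your candidate $r=a+\eps c_1+\delta u^*v^*$) is not shown to be invertible; the fill $\eps c_1$ is what creates these uncontrolled cross terms. Moreover the nilpotency of $N_1$, which you correctly identify as what must control the $ab_1$-block, never actually enters your computation.

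The repair is to drop the fill entirely and split $ab_1=ac_1+a(b_1-c_1)$, peeling the two pieces off in order; this is the paper's proof, reorganized in your notation. Since $c_1S^{-1}a=c_1(\tfrac1\delta vu+R)a=0$, the element $S^{-1}ac_1$ is square-zero, so $1+S^{-1}ac_1$ is invertible with inverse $1-S^{-1}ac_1$; then $(1+S^{-1}ac_1)^{-1}S^{-1}a(b_1-c_1)=S^{-1}a(b_1-c_1)$, and $1+S^{-1}a(b_1-c_1)=S^{-1}u^*(N_1+\delta)v^*$ is invertible by nilpotency of $N_1$. Hence $a+\delta u^*v^*=S\,(1+S^{-1}ac_1)(1+S^{-1}a(b_1-c_1))\in\mathrm{GL}(A)$, with a single parameter $\delta\to0$. (In the paper's notation $S=t_0$, the square-zero identity is $a_{3,1}t_0^{-1}a_{3,1}=0$, and the last factor is $t_1$; note also that both one-sided inverses are exhibited explicitly, so finiteness of $A$ is not needed at this step.)
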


\begin{proof}
The proof is the similar to that of Lemma 4.2 of \cite{A-NCP-LAlg}, but with $c_3=b_3=0$.

Fix an arbitrary $\eps>0$ for the time being, and note that
$$1_A=c_1 + (b_1-c_1) + b_2.$$
Put
$$a_{3, 1} = b_2ac_1$$
$$a_{3, 2} = b_2a(b_1-c_1)$$
$$a_{3, 3} = b_2ab_2.$$
Then, by \eqref{unit-b} and \eqref{zero-left},
$$ a_{3, 1} + a_{3, 2} + a_{3, 3} = b_2 a (c_1 + (b_1-c_1)+b_2) = b_2 a (b_1+b_2) = b_2 a= (b_1+b_2) a = a.$$

Note that
$$a_{3, 3} = u^{-1}a'_{3, 3}v^{-1},$$ where  the element  $$a'_{3, 3} := ua_{3, 3}v = u(b_2ab_2)v$$ is nilpotent (by \eqref{nil-assumption}).

Put $$t_0=u^{-1}(a'_{3, 3} + \eps1_A)v^{-1}.$$ Then $t_0$ is invertible and
\begin{equation}\label{reduce-0}
\norm{t_0 - a_{3, 3}} = \norm{ \eps u^{-1}v^{-1}} = \eps.
\end{equation}

Using \eqref{nil-inv}, write
$$t_0^{-1} =v(\frac{1_A}{\eps} - \frac{a'_{3, 3}}{\eps^2} + \frac{(a'_{3, 3})^2}{\eps^3} + \cdots + (-1)^{n-1}\frac{(a'_{3, 3})^{n-1}}{\eps^{n}}+\cdots)u = t_2 + \frac{vu}{\eps},$$ 
where, by \eqref{in-Her}, $$t_2 := -\frac{va_{3, 3}'u}{\eps^2} + \frac{v(a_{3, 3}')^2u}{\eps^3} + \cdots + (-1)^{n-1} \frac{v(a_{3, 3}')^{n-1}u}{\eps^{n}} +\cdots \in \overline{b_2Ab_2}.$$ (Note that the terms in the series above are eventually zero.) 
Therefore, as $$c_1b_2=0 \quad\mathrm{and}\quad c_1vub_2 = 0,$$ one has
\begin{equation}\label{0-prod-1}
a_{3, 1}t_0^{-1}a_{3, 1}  =  b_2ac_1 (t_2+\frac{vu}{\eps} )b_2ac_1 = 0
\end{equation}
and
\begin{equation}\label{0-prod-2}
a_{3, 1}t_0^{-1}a_{3, 2}  =  b_2ac_1 (t_2+\frac{vu}{\eps} )b_2a(b_1-c_1) = 0.
\end{equation}
Hence by \eqref{0-prod-1}, one has
\begin{equation}\label{reduce-1}
(a_{3, 1}+t_0)t_0^{-1}(1_A-a_{3, 1}t_0^{-1}) = a_{3, 1}t_0^{-1} - a_{3, 1} t_0^{-1}a_{3, 1} t_0^{-1} + 1_A -a_{3, 1}t_0^{-1}=1_A
\end{equation}
and
$$t_0^{-1}(1_A-a_{3, 1}t_0^{-1})(a_{3, 1}+t_0) = t_0^{-1} a_{3, 1} + 1_A -t_{0}^{-1}a_{3, 1}t_0^{-1}a_{3, 1} - t_0^{-1}a_{3, 1}= 1_A.$$
In particular, the element $(a_{3, 1}+t_0)$ is invertible with
$$(a_{3, 1}+t_0)^{-1} = t_0^{-1}(1_A-a_{3, 1}t_0^{-1}).$$
Then, by this together with \eqref{0-prod-2}, one has
\begin{eqnarray}\label{reduce-2}
& & t_0^{-1}(1_A-a_{3, 1}t_0^{-1})(a_{3, 1} + a_{3, 2} + t_0) \\
& = &  (a_{3, 1} + t_0)^{-1} (a_{3, 1} + a_{3, 2} + t_0) \nonumber \\
& = & 1_A + (a_{3, 1} + t_0)^{-1}a_{3, 2} \nonumber \\
& = & 1_A + t_0^{-1}(1-a_{3, 1}t_0^{-1}) a_{3, 2} \nonumber \\
& = & 1_A + t_0^{-1} a_{3, 2}. \nonumber 
\end{eqnarray}

Consider the element 
\begin{eqnarray}\label{defn-t}
t_1 & :=& 1_A+t_0^{-1} a_{3, 2} \\
&=&  t_0^{-1}(t_0 + a_{3, 2}) \nonumber \\
& = &t_0^{-1}(u^{-1}(a'_{3, 3} + \eps1_A)v^{-1} + u^{-1}a'_{3, 2}v^{-1}) \nonumber \\
& = & t_0^{-1}u^{-1}(a'_{3, 3}  + a'_{3, 2} + \eps1_A)v^{-1}, \nonumber
\end{eqnarray}
where $$a'_{3, 2} = ua_{3, 2}v = u(b_2a(b_1-c_1))v.$$
Since the element $$a'_{3, 3}  + a'_{3, 2} = u((b_2ab_2) + b_2a(b_1-c_1))v$$ is nilpotent,  $t_1$ is invertible.

Put 
$$y = (a_{3, 1}+t_0) t_1.$$
Since $a_{3, 1}+t_0$ and $t_1$ are invertible, one has $$y\in\mathrm{GL}(A).$$
We can now calculate as follows,  applying \eqref{reduce-1} in the third step, \eqref{reduce-2} in the fifth step, the definition of $t_1$ (see \eqref{defn-t}) in the sixth step, and \eqref{reduce-0} in the last step:
\begin{eqnarray*}
\norm{a-y} & = & \norm{a_{3, 1} + a_{3, 2} + a_{3, 3} - (a_{3, 1} + t_0)t_1} \\
& \leq & \norm{a_{3, 3} - t_0} + \norm{1_A(a_{3, 1} + a_{3, 2} + t_0) - (a_{3, 1} + t_0) t_1} \\
& \leq & \norm{a_{3, 3} - t_0} + \norm{(a_{3, 1} + t_0)t_0^{-1}(1_A-a_{3, 1}t_0^{-1}) (a_{3, 1}+a_{3, 2} + t_0) - (a_{3, 1} + t_0) t_1} \\
& \leq & \norm{a_{3, 3} - t_0} + \norm{a_{3, 1} + t_0} \norm{t_0^{-1}(1_A-a_{3, 1}t_0^{-1}) (a_{3, 1}+a_{3, 2} + t_0) - t_1} \\
& = & \norm{a_{3, 3} - t_0} + \norm{a_{3, 1} + t_0} \norm{1_A + t_0^{-1}a_{3, 2} - t_1} \\
& = & \norm{a_{3, 3} - t_0} = \eps.
\end{eqnarray*}
Since $\eps$ is arbitrary, this shows that $a\in\overline{\mathrm{GL}(A)}$.
\end{proof}

\begin{lem}\label{nilpotent-0}
Let $A$ be a unital C*-algebra, and let $\phi: \mathrm{M}_{n}(\Comp) \to A$ be an extendable order zero c.p.c~map (see Definition \ref{cp-cal}). Let $\delta \in (0, 1)$, and set $$\phi(1_n) = h \quad\textrm{and}\quad \phi(e_{i, i}) = e_i,\quad i=1, 2, ..., n,$$ and set $$f_\delta(h) = b_2, \quad 1-b_2 = b_1, \quad  f_{\delta/2}(h) = c_2, \quad  1-f_{\delta/2}(h) = c_1.$$ 

Then, for any $m\in\mathbb N$ with $m\leq n$ and any $$1\leq i_1 < i_2 < \cdots < i_m \leq n,$$ there are unitaries $u, v\in A$ with the following two properties:
\begin{enumerate}

\item $c_1vub_2 = 0$.

\item\label{trangular} 
If $a\in \overline{c_2Ac_2}$ is an element such that there is $d\in\mathbb N\cup\{0\}$ with $d< m$ such that 
$$e_i a e_j =0,\quad \textrm{whenever $j-i > d$}, $$ 
and
$$e_{i_j} a = a e_{i_j} = 0,\quad 1\leq j \leq m,$$ 
then $uav$ is nilpotent. If, moreover, $a\in\overline{b_2Ab_2}$, then $$v(uav)^ku\in \overline{b_2Ab_2},\quad k=1, 2, ... .$$
\end{enumerate}
\end{lem}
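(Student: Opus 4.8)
The plan is to work inside the order zero image of $\phi$ and turn the statement into a finite, ``banded lower triangular'' phenomenon. Since $\phi$ is extendable, by the discussion preceding Lemma \ref{p-unitary} the algebra $C:=\mathrm{C}^*(\phi(\mathrm{M}_n(\Comp)))$ is isomorphic to $\mathrm{C}_0((0,1])\otimes\M{n}{\Comp}$ as in \eqref{dim-drop}, with $h=\phi(1_n)$ the positive element governing the ``height'' direction, the $e_i=\phi(e_{i,i})$ the diagonal levels, and associated partial isometries realizing the off-diagonal matrix units. The cut-offs $b_2=f_\delta(h)$ and $c_2=f_{\delta/2}(h)$ satisfy $c_2b_2=b_2$, hence $c_1b_2=0$. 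In the block decomposition $a=\sum_{i,j}e_iae_j$, the hypotheses say exactly that $a$ is supported on the band $\{(i,j):j-i\leq d\}$ and has vanishing rows and columns at the $m$ hole-levels $i_1<\cdots<i_m$.

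First I would use Lemma \ref{p-unitary} to normalise the hole positions: conjugating by a permutation unitary commuting with $h$, I may assume the holes sit at a fixed convenient set of levels, and this conjugation preserves $b_2$ and $c_2$. I would then build $u$ and $v$ from the shift unitary $w$ produced by Lemma \ref{n-shifting}, applied to the $v_i$ coming from the partial isometries over the non-hole levels, composed with a permutation from Lemma \ref{p-unitary}. The feature I want to exploit is property \eqref{squeez-zero}: the shift sends a bottom-level element entirely into the next level, \emph{annihilating} the component that would otherwise remain diagonal, which is precisely the cancellation $g_{0,0}(1)=-1$ recorded in \eqref{g-condition}. Thus the transfer operator $vu$ acts as a genuine diagonal-killing one-step shift of the levels rather than a mere relabelling.

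With $u,v$ fixed, nilpotency of $uav$ I would verify by showing that, in the level block decomposition, $uav$ is strictly block-triangular with respect to the linear order on the non-hole levels induced by the shift; since there are only finitely many levels, this forces $(uav)^N=0$ for $N$ not exceeding the number of levels. The band hypothesis $d<m$ enters decisively here: an application of $a$ can move a level downward by at most $d$, while the shift moves it strictly upward past the holes, so with more holes ($m$) than band width ($d$) no surviving block-trajectory can return to its starting block before being absorbed at a hole (where $a$ vanishes) or pushed off the top of the tower; hence no nonzero block-diagonal survives. The remaining two assertions are bookkeeping on the height direction: $c_1vub_2=0$ holds because $vu$ carries $b_2$ into the support of $c_2=f_{\delta/2}(h)$, which $c_1=1-c_2$ annihilates (using $c_1b_2=0$, Lemma \ref{one-lem}, and the height action of the shift), while the containment $v(uav)^ku\in\overline{b_2Ab_2}$ for $a\in\overline{b_2Ab_2}$ follows in the spirit of property \eqref{keep-zero}, which keeps iterates inside the hereditary subalgebra generated by the levels.

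The main obstacle is the nilpotency step, and specifically making precise how the explicit shift of Lemma \ref{n-shifting} converts the band-plus-holes data with $d<m$ into honest strict block-triangularity. A construction using permutation unitaries alone must fail: a full lower triangular block pattern has strictly more nonzero blocks than any strictly triangular pattern can accommodate, and since row and column permutations $\sigma,\tau$ satisfy $\sum_i\sigma^{-1}(i)=\sum_i\tau^{-1}(i)$, one can never arrange $\sigma^{-1}(i)<\tau^{-1}(i)$ for all $i$, so a nonzero diagonal block always persists. The content of the proof is therefore that the order zero structure supplies an extra level---the height reservoir---into which the shift drains the diagonal, so that the diagonal-killing in \eqref{squeez-zero} is exactly what breaks this obstruction; tracking this drain across all $t\in(0,1]$ and checking that $d<m$ forces termination is where the real work lies.
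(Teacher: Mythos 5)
Your central claim---that ``a construction using permutation unitaries alone must fail'' because the diagonal blocks cannot all be killed by row and column permutations---is incorrect in the present setting, and it leads you away from what is actually a purely combinatorial argument. Your counting obstruction ($\sum_i\sigma^{-1}(i)=\sum_i\tau^{-1}(i)$) is valid only for a band matrix with \emph{no} zero rows or columns; here the hypotheses give $m$ indices $i_1<\cdots<i_m$ at which both the row and the column of $a$ vanish, and $d<m$, and this is exactly the slack that defeats the counting argument. The paper's proof uses \emph{only} the permutation unitaries of Lemma \ref{p-unitary}: a ``stretching'' permutation $w_1$ (with $w_1^*e_iw_1=e_{\sigma(i)}$) that relocates the holes to the top $m$ levels while expanding all gaps among the remaining levels so that the band condition is preserved (property \eqref{stretch}), followed by a cyclic shift $w_2$ by $m$ positions; with $u=w_2w_1$ and $v=w_1^*$, each block $e_i(uav)e_j$ with $j\geq i$ vanishes either because the shifted row index lands on a hole (when $i\leq m$) or because the downward shift by $m>d$ pushes the pair outside the band (when $i>m$). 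Strict lower triangularity with respect to the $n$ orthogonal elements $\tilde e_i=f_{\delta/4}(e_i)$, which sum to a unit for $\overline{c_2Ac_2}$, then gives $(uav)^n=0$ directly; no ``diagonal-draining'' mechanism is needed.

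Beyond the mistaken obstruction, your proposed fix---building $u,v$ from the unitary $w$ of Lemma \ref{n-shifting} and exploiting \eqref{squeez-zero} and the cancellation \eqref{g-condition}---would break the other two conclusions of the lemma. Both $c_1vub_2=0$ and $v(uav)^ku\in\overline{b_2Ab_2}$ are obtained in the paper precisely because the permutation unitaries of Lemma \ref{p-unitary} commute with $h$ (hence with $b_2$, $c_1$, $c_2$), whereas the shift unitary of Lemma \ref{n-shifting} does not commute with $h$ and your height-direction bookkeeping for these properties is not substantiated. Moreover, applying Lemma \ref{n-shifting} requires elements $v_1,\dots,v_k$ with $v_1^*v_1$ orthogonal to all $v_iv_i^*$, i.e.\ an extra ``level $0$'' as in the dimension drop algebra $D_{n+1}$ of \eqref{dim-drop}; inside $\mathrm{C}^*(\phi(\mathrm{M}_n(\Comp)))\cong\mathrm{C}_0((0,1])\otimes\M{n}{\Comp}$ no such level is available. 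Lemma \ref{n-shifting} does play the role you describe, but one stage later, in the proof of Proposition \ref{prop-ab-tsr1}, not in Lemma \ref{nilpotent-0}.
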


\begin{proof}
With the given $m$ and $i_1, i_2, ..., i_{m}$, define the permutation
$$\sigma:\{1, 2, ..., n\} \to \{1, 2, ..., n\}$$ 
by
stretching $\{1, 2, ..., n-m\}$ to $\{1, 2, ..., n\}\setminus\{i_1, i_2, ..., i_m\}$, and then moving $\{n-m+1, ..., n\}$ to fill $\{i_1, i_2, ..., i_m\}$. More precisely, write
$$
\left\{
\begin{array}{l}
I_0 = \{1 \leq i \leq n-m: i < i_1\}, \\
I_1 = \{1 \leq i \leq n-m : i_1\leq i,\ i+1 < i_2\}, \\
\cdots \\
I_{m-1} = \{1 \leq i \leq n-m : i_{m-1} \leq i + m-2,\ i+m-1 < i_m  \}, \\
I_m = \{1 \leq i \leq n-m : i_m \leq i+m-1 \},
\end{array}
\right.
$$
and note that $\{1, 2, ..., n-m\} = I_0 \sqcup I_1\sqcup \cdots \sqcup I_m$ (some of $I_k$, $k=1,..., m$, might be empty).
Then,
$$
\sigma(i) = \left\{
\begin{array}{ll}

i+k,    & \textrm{if $i\in I_k$}, \\

i_{i-n+m},   & n-m+1 \leq i\leq n.

\end{array}
\right.
$$

Note that for any $d\in\mathbb N\cup\{0\}$,
\begin{equation}\label{stretch}
\sigma(j) - \sigma(i) > d,\quad\textrm{if $1\leq i, j \leq n-m$ and $j-i > d$}.
\end{equation}

Since $\phi$ is extendable, by Lemma \ref{p-unitary}, there is a unitary $w_1\in A$ such that
$$[w_1, h]=0 \quad\mathrm{and}\quad w_1^*e_iw_1 = e_{\sigma(i)},\quad i=1, 2, ..., n.$$
By Lemma \ref{p-unitary} again, there is a unitary $w_2\in A$ satisfying the conditions $$[w_2, h] = 0$$ and
$$w_2^*e_iw_2 = \left\{
\begin{array}{ll}
e_{i+n-m}, & 1\leq i \leq m, \\
e_{i-m}, & m+1 \leq i \leq n.
\end{array}
\right.$$
Then, the unitaries $$u:=w_2w_1\quad\mathrm{and}\quad v:=w_1^*$$ possess the property of the lemma. 

Indeed, since $w_1$ and $w_2$ commute with $h$ and $c_1b_2 = 0$, one has $$c_1(vu)b_2 =  c_1(w_1^*w_2w_1) b_2 = (w_1^*w_2w_1) (c_1b_2) =  0.$$

Let $a\in \overline{c_2Ac_2}$  satisfy
$$e_i a e_j =0,\quad \textrm{whenever $j-i > d$} $$
for some (fixed) non-negative integer $d<m$,
and
$$e_{i_j} a = a e_{i_j} = 0,\quad 1\leq j \leq m.$$

Consider the element $w_1aw_1^*$. Note that, for any $n-m+1 \leq i \leq n$, $$\sigma(i) \in\set{i_1, i_2, ..., i_m}.$$ Hence,
\begin{equation}\label{zero-d-left}
e_i(w_1aw_1^*) = w_1(e_{\sigma(i)}a)w_1^* = 0,\quad n-m+1 \leq i \leq n
\end{equation}
and
\begin{equation}\label{zero-d-right}
(w_1aw_1^*)e_i = w_1(ae_{\sigma(i)})w_1^* = 0,\quad n-m+1 \leq i \leq n.
\end{equation}

Also note that, for any $1\leq i, j\leq n-m$ satisfying $j-i > d$, by \eqref{stretch}, also $\sigma(j) - \sigma(i) > d$, and hence $$e_i(w_1aw_1^*)e_j = w_1(e_{\sigma(i)}ae_{\sigma(j)})w_1^* = 0.$$ Together with \eqref{zero-d-left} and \eqref{zero-d-right}, this implies 
\begin{equation}\label{triangular-middle}
e_i(w_1aw_1^*)e_j = 0,\quad j-i>d,\ 1\leq i, j\leq n.
\end{equation}

Consider the element $uav=w_2w_1aw_1^*$, and note that for any $1\leq i, j \leq n$ with $j \geq i$,

\begin{itemize}

\item if $1\leq i \leq m$,  then $n-m+1 \leq i+n-m \leq n$, and by \eqref{zero-d-left}, $$e_i(w_2w_1aw_1^*)e_j = w_2 (e_{i+n-m}w_1aw_1^*)e_j = 0,$$ 

\item if $m+1 \leq i \leq n$, then $j-(i-m)\geq m > d$, and hence, by \eqref{triangular-middle}, $$e_i(w_2w_1aw_1^*)e_j = w_2 (e_{i-m}w_1aw_1^*e_j) = 0.$$

\end{itemize}
In other words, 
\begin{equation}\label{matrix-cut}
e_i(uav)e_j = 0,\quad j\geq i.
\end{equation}

Consider $$\tilde{e}_i := f_{\frac{\delta}{4}}(e_i),\quad i=1, 2, ..., n,$$
and it follows from \eqref{matrix-cut} that
 $$\tilde{e}_i(uav)\tilde{e}_j = 0,\quad j\geq i.$$

Since $uav \in \overline{c_2Ac_2}$, one has
$$uav = (\tilde{e}_1 + \cdots + \tilde{e}_n)uav (\tilde{e}_1 + \cdots + \tilde{e}_n) = \sum_{i, j=1}^n\tilde{e}_i(uav)\tilde{e}_j = \sum_{i > j}\tilde{e}_i(uav)\tilde{e}_j.$$
In particular, there is a decomposition 
$$uav = \sum_{i>j} a^{(1)}_{i, j},$$ where $a^{(1)}_{i, j} \in \overline{\tilde{e}_iA\tilde{e}_j}$. Direct calculation shows that
$$(uav)^2 = \sum_{i>k>j} a_{i, k}a_{k, j} = \sum_{i > j-1} a^{(2)}_{i, j},$$ where $a^{(2)}_{i, j} \in \overline{\tilde{e}_iA\tilde{e}_j}$.

Repeating this, one obtains $(uav)^{n} = 0.$
Thus, $uav$ is nilpotent.

If, moreover, $a \in \overline{b_2Ab_2}$, since $u$ and $v$ commute with $h$ (and hence commute with $b_2$), one has $$v(uav)^ku \in v(u(\overline{b_2Ab_2})v)^ku\subseteq \overline{b_2Ab_2},\quad k=1, 2, ... .$$
\end{proof}

\begin{prop}\label{trangular-invertible}
Let $A$ be a unital C*-algebra, and let $a\in A$. If there exist an extendable order zero c.p.c.~map $\phi: \mathrm{M}_{n}(\Comp) \to A$, a non-negative integer $d < m$, and $1\leq i_1 < i_2 < \cdots < i_m \leq n$ such that
\begin{enumerate}
\item $(1-h)a = 0$, where $h=\phi(1_{n})$,
\item $e_{i_j} a = a e_{i_j} = 0$, $j=1, 2, ..., m$, where $e_i = \phi(e_{i, i})$, and
\item $e_i a e_j =0$, if $j-i>d$,
\end{enumerate}
then $a\in\overline{\mathrm{GL}(A)}$.
\end{prop}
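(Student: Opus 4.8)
The plan is to reduce the statement directly to Lemma \ref{invt-approx}, choosing the four positive elements there as continuous functions of $h=\phi(1_n)$ and producing the two required unitaries by invoking Lemma \ref{nilpotent-0}. Concretely, I would fix $\delta\in(0,1)$ and set
$$b_2=f_\delta(h),\quad b_1=1-f_\delta(h),\quad c_2=f_{\delta/2}(h),\quad c_1=1-f_{\delta/2}(h),$$
exactly as in the hypotheses of Lemma \ref{nilpotent-0}. Since all four are continuous functions of the single positive element $h$, they generate a commutative C*-algebra (hypothesis (3) of Lemma \ref{invt-approx}), and the two elementary pointwise facts $f_\delta\leq f_{\delta/2}$ and $f_\delta f_{\delta/2}=f_\delta$ yield at once $b_1+b_2=c_1+c_2=1_A$, $b_1c_1=c_1$, $b_2c_2=b_2$, and $c_1b_2=0$, i.e. hypotheses (1)--(6). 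Hypothesis (8), that $b_1a=0$, follows from $(1-h)a=0$: this says $ha=a$, so Lemma \ref{one-lem} gives $f_\delta(h)a=f_\delta(1)a=a$, hence $b_2a=a$ and $b_1a=0$. In particular the decomposition $a=b_2ac_1+b_2a(b_1-c_1)+b_2ab_2$ used inside Lemma \ref{invt-approx} is valid, since $b_2a=a$ and $c_1+(b_1-c_1)+b_2=1_A$.

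It remains to verify hypothesis (7). I would apply Lemma \ref{nilpotent-0} with the same $\phi$, $\delta$, $m$, and $i_1<\cdots<i_m$, obtaining unitaries $u,v\in A$ with $c_1vub_2=0$ (the last clause of (7)) together with the triangular nilpotency property. To supply the two elements needed, note the simplification $(b_1-c_1)+b_2=f_{\delta/2}(h)=c_2$, so that $b_2a(b_1-c_1)+b_2ab_2=b_2ac_2$. Thus it suffices to check that both $b_2ac_2$ and $b_2ab_2$ meet the hypotheses of property \eqref{trangular} in Lemma \ref{nilpotent-0}. Because $b_2,c_2$ are functions of $h$, and $h$ is central in $\mathrm{C}^*(\phi(\mathrm{M}_n(\Comp)))$, both commute with every $e_i=\phi(e_{i,i})$; together with $c_2b_2=b_2$ this gives $b_2ac_2=c_2(b_2a)c_2$ and $b_2ab_2=c_2(b_2ab_2)c_2$, so both elements lie in $\overline{c_2Ac_2}$. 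Commuting the cut-downs past the central $h$ then converts the Proposition's hypotheses (3) and (2) into
$$e_i(b_2ac_2)e_j=b_2(e_iae_j)c_2=0\ (j-i>d),\qquad e_{i_j}(b_2ac_2)=(b_2ac_2)e_{i_j}=0,$$
and identically for $b_2ab_2$. Hence Lemma \ref{nilpotent-0}\eqref{trangular} applies to each, so $u(b_2ac_2)v$ and $u(b_2ab_2)v$ are nilpotent (the first clause of (7)); and since $b_2ab_2\in\overline{b_2Ab_2}$, the ``moreover'' clause gives $v(u(b_2ab_2)v)^nu\in\overline{b_2Ab_2}$ for all $n$, which is exactly \eqref{in-Her}.

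With all of (1)--(8) verified, Lemma \ref{invt-approx} yields $a\in\overline{\mathrm{GL}(A)}$. I expect the only genuine work to be the bookkeeping of the middle paragraph: confirming that $b_2ac_2$ and $b_2ab_2$ really sit in $\overline{c_2Ac_2}$ and that they inherit \emph{both} the band condition ($e_iae_j=0$ for $j-i>d$) and the annihilation condition ($e_{i_j}a=ae_{i_j}=0$) after the central cut-downs are moved past the $e_i$. Everything else is a direct matching of hypotheses, the two substantive inputs---the construction of $u,v$ and the explicit invertible approximant---having already been isolated in Lemmas \ref{nilpotent-0} and \ref{invt-approx}, respectively.
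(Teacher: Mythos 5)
Your proposal is correct and follows essentially the same route as the paper's proof: the same functional-calculus choices $b_1,b_2,c_1,c_2$ of $h$, the same reduction of hypothesis (7) of Lemma \ref{invt-approx} to Lemma \ref{nilpotent-0} applied to $b_2ab_2$ and $b_2a(b_1-c_1)+b_2ab_2$, and the same use of $[h,e_i]=0$ to transfer the band and annihilation conditions. The only cosmetic difference is your observation that $b_2a(b_1-c_1)+b_2ab_2=b_2ac_2$, which the paper does not bother to record.
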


\begin{proof}
Pick an arbitrary $\delta\in(0, 1)$, and consider the elements $$b_2 = f_\delta(h),\quad c_2 = f_{\delta/2}(h),\quad b_1 = 1- f_\delta(h),\quad \mathrm{and}\quad c_1=1-f_{\delta/2}(h).$$ Note that $$c_2b_2=b_2,\quad b_1c_1 = c_1,\quad \mathrm{and}\quad c_1b_2 = 0.$$
Since $(1-h)a=0$, one has $a=ha$, and
$$b_2a = f_\delta(h) a = f_\delta(1)a = a.$$ Hence, $b_1a = 0.$

Consider the elements
$$b_2ab_2\quad\mathrm{and}\quad b_2a(b_1-c_1) + b_2ab_2,$$
and note that they are both in $\overline{c_2Ac_2}$. Also note that (since $h$ commutes with $e_i$, $i=1, 2, ..., n$)
$$e_ib_2ab_2e_j = b_2 e_i a e_j b_2 = 0,\quad   j-i>d,$$
and
$$e_{i_j} (b_2ab_2) = b_2e_{i_j}ab_2 = 0  = b_2a e_{i_j} b_2 = (b_2ab_2) e_{i_j},\quad j=1, 2, ..., m.$$
The same argument shows that
$$e_i(b_2a(b_1-c_1) + b_2ab_2)e_j = 0,\quad   j-i>d, $$
and
$$e_{i_j} (b_2a(b_1-c_1) + b_2ab_2)  = 0  = (b_2a(b_1-c_1) + b_2ab_2)e_{i_j},\quad j=1, 2, ..., m.$$

Denote by $u, v$ the unitaries in $A$ obtained by applying Lemma \ref{nilpotent-0} to $\phi$, $\delta$, $m$, and $i_1, i_2, ..., i_m$. Then, by Lemma \ref{nilpotent-0}, one has
$$c_1vub_2 = 0,$$
the elements 
$$u(b_2ab_2)v\quad\mathrm{and}\quad u(b_2a(b_1-c_1) + b_2ab_2)v$$
are nilpotent, and $$v(ub_2ab_2v)^nu \in \overline{b_2Ab_2},\quad n=1, 2, ....$$ Hence, by Lemma \ref{invt-approx},   $a\in\overline{\mathrm{GL}(A)}$.
\end{proof}

\section{Property (D) and stable rank one}

\begin{defn}\label{defn-Prop-D0}
Let $A$ be a unital C*-algebra. An element $a\in A$ will be said to be a $\mathcal D_0$-element if 
there exists a non-zero positive element $b\in A$ satisfying
\begin{equation*}
ba = ab = 0,
\end{equation*}
and there exists an order zero c.p.c.~map $$\phi:\mathrm{M}_{pq}(\Comp) \to A,$$ where $p, q \in\mathbb{N}$, and there exist $r, l\in\mathbb N$ such that, with $$e_i:=\phi(e_{i, i}),\quad i=1, 2, ..., pq,$$  
$$s_k:=e_{(k-1)p+1} + \cdots + e_{(k-1)p+r}, \quad k=1, ..., q,$$ and 
$$E_k:=e_{(k-1)p+1} + \cdots + e_{(k-1)p+p},\quad k=1, ..., q,$$ one has
         \begin{enumerate} 
                  
         \item\label{defn-diag-perp-3b}  $E_{k_1}aE_{k_2} = 0$, $k_2-k_1 \geq l$, $1\leq k_1, k_2 \leq q$,
         
          \item\label{defn-sizes} $qr > (l+1)p$,
         
         \item\label{defn-diag-perp-3c} for each $k=1, 2, ..., q$, there are positive elements $c_k, d_k$ with norm $1$  such that
         \begin{enumerate}
         
         \item\label{defn-D-in-zero}  $c_{k}, d_{k}   \in \overline{bAb},$
                  
         \item\label{defn-diag-perp-3e} 
         $c_{k} \perp s_k$ and $c_{k} \perp d_k,$ 
         
         \item\label{defn-diag-perp-3f}  
         $c_{k} E_k = c_{k}$ and  $d_{k} E_k = d_{k}, $ and 
         
          \item\label{defn-diag-perp-3g} 
          $s_k \precsim c_k$ and $1_A-h \precsim d_k,$
          where $h:= \phi(1_{pq})$.
         \end{enumerate}
  \end{enumerate}
  \end{defn}
  
Recall the following important notion:
\begin{defn}[Definition 3.1 of \cite{RorUHF}]
Let $A$ be a C*-algebra, then, define $$\mathrm{ZD}(A):=\{a\in A: d_1a = ad_2 =0\ \textrm{for some $d_1, d_2\in A^+\setminus\{0\}$}\}.$$
\end{defn}
\begin{defn}\label{defn-Prop-D}
The C*-algebra $A$ will be said to have Property (D) if for any $a\in \mathrm{ZD}(A)$ and any $\eps>0$, there are unitaries $u_1, u_2\in A$ and a $\mathcal D_0$-element $a'\in A$ such that $\norm{u_1au_2 - a'} < \eps.$
\end{defn}

It turns out that any $\mathcal D_0$-element is in the norm closure of the invertible elements. 
\begin{prop}\label{prop-ab-tsr1}
Let $a\in A$ be a $\mathcal D_0$-element of a unital C*-algebra $A$. Then $a\in \overline{\mathrm{GL}(A)}$.
\end{prop}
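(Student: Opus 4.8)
The goal is to show that a $\mathcal D_0$-operator $a$ lies in $\overline{\mathrm{GL}(A)}$. The strategy is to massage $a$ into a form where Proposition \ref{trangular-invertible} applies, using the extra structure in Definition \ref{defn-Prop-D0} to build the required permutations of matrix units. Concretely, I want to produce an \emph{extendable} order zero map together with indices $d < m$ and $1 \le i_1 < \cdots < i_m$ satisfying the three hypotheses of Proposition \ref{trangular-invertible}: that $(1-h')a' = 0$ for the new unit $h'$, that $a'$ kills a collection of $m$ diagonal matrix units, and that $a'$ is ``$d$-banded'' in the sense $e_ia'e_j = 0$ whenever $j - i > d$. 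The banded condition already comes essentially for free from hypothesis \eqref{defn-diag-perp-3b} (block-level banding with bandwidth $l$ relative to the coarse blocks $E_k$), which translates into a banding condition at the level of the finer matrix units $e_i$ with some bandwidth $d$ controlled by $l$ and $p$.

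\emph{Key steps.} First I would use the order zero map $\phi$ on $\mathrm{M}_{pq}(\Comp)$ directly; after cutting down by a suitable $f_\delta(h)$ I can assume (as in Proposition \ref{trangular-invertible}) that $a$ is supported in the hereditary algebra of $c_2 = f_{\delta/2}(h)$ and that $(1-h)a = 0$ after replacing $h$ by $f_\delta(h)$. Next, the heart of the argument is to arrange for $a$ to annihilate enough diagonal matrix units. The elements $c_k, d_k$ from \eqref{defn-diag-perp-3c}, which are subordinate to $b$ (so orthogonal to $a$ since $ab = ba = 0$) and satisfy the Cuntz comparisons $s_k \precsim c_k$ and $1_A - h \precsim d_k$, are what let me ``hide'' the pieces $s_k$ and the complementary part $1-h$ inside the kernel. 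Using Lemma \ref{rordom-lem} to realize these subequivalences as honest compressions $f_\delta(s_k) = v_k^* f_\eps(c_k) v_k$ with range in $\mathrm{Her}(c_k) \subseteq \overline{bAb}$, and then invoking Lemma \ref{switch} (the unitary $w$ swapping the roles of $vv^*$ and $v^*v$) to conjugate $a$ so that the relevant $s_k$-supports get mapped into the orthogonal-to-$a$ region, I can force $a$ (up to unitary equivalence and an arbitrarily small perturbation) to kill a block of $r$ matrix units in each of the $q$ columns. The counting hypothesis \eqref{defn-sizes}, $qr > (l+1)p$, is precisely what guarantees that the number $m = qr$ of annihilated matrix units exceeds the bandwidth budget $d \approx (l+1)p$ needed for the banding, so that the relation $d < m$ holds and Proposition \ref{trangular-invertible} is applicable.

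\emph{Main obstacle.} The delicate point is the bookkeeping that simultaneously achieves all three of Proposition \ref{trangular-invertible}'s hypotheses after the conjugations. Each use of Lemma \ref{switch} moves the support of some $s_k$ into the kernel of $a$, but I must check that these swaps neither destroy the banding condition \eqref{defn-diag-perp-3b} nor spoil the orthogonality relations among the $c_k, d_k$ and the $s_k$ guaranteed by \eqref{defn-diag-perp-3e} and \eqref{defn-diag-perp-3f}. Because the $c_k, d_k$ live in $\overline{bAb}$ and are orthogonal to the corresponding $s_k$ and to each other within each block $E_k$, the conjugating unitaries can be chosen to act blockwise and to commute with $h$, so the band structure is preserved; verifying this compatibility carefully—and tracking the small errors $\eps$ introduced by the $f_\delta$-cuts through Lemma \ref{rordom-lem}—is the technical crux. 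Once the conjugated, perturbed operator is put in banded form annihilating $m = qr > d$ matrix units, extendability of $\phi$ (needed for Lemma \ref{p-unitary} inside Proposition \ref{trangular-invertible}) finishes the argument, giving $a \in \overline{\mathrm{GL}(A)}$.
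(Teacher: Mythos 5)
Your proposal correctly identifies the target (reduce to Proposition \ref{trangular-invertible} with $d=(l+1)p$ and $m=qr$, so that \eqref{defn-sizes} supplies $d<m$), and correctly locates the roles of condition \eqref{defn-diag-perp-3b} for the banding and of Lemma \ref{rordom-lem} plus Lemma \ref{switch} for converting $s_k\precsim c_k$ into honest annihilation of $qr$ matrix units by pushing their supports into $\mathrm{Her}(c_k)\subseteq\overline{bAb}$, which kills $a$. This part matches the paper.

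There is, however, a genuine gap at the first hypothesis of Proposition \ref{trangular-invertible}. You write that ``after cutting down by a suitable $f_\delta(h)$ I can assume \dots that $(1-h)a=0$ after replacing $h$ by $f_\delta(h)$.'' This is not something you can assume: for a $\mathcal D_0$-operator there is no a priori relation between $a$ and $1_A-h$, and replacing $a$ by $f_\delta(h)\,a\,f_\delta(h)$ is not a small perturbation. The condition $(1-h)a=0$ must be \emph{engineered}, and this is exactly what the hypothesis $1_A-h\precsim d_k$ together with $d_k\in\overline{bAb}$ is for. The paper does this by building a ladder $v_1,\dots,v_q$ with $v_k^*v_k=f_{\frac{1}{2}}(1_A-f_{\frac{1}{2}}(h'))$ and $v_kv_k^*\in\mathrm{Her}(d_k)\subseteq\mathrm{Her}(E_k)$, and then invoking Lemma \ref{n-shifting} (not Lemma \ref{switch}) to produce a single unitary $w$ with three simultaneous properties: part (3) gives $(1_A-f_{\frac{1}{4}}(h'))w^*\in\overline{Ad_1}$, hence $(1_A-f_{\frac{1}{4}}(h'))w^*a=0$; part (1) gives $wE_i\in\overline{(E_1+E_i+E_{i+1})A}$, which is precisely why the block bandwidth degrades only from $l$ to $l+1$ (and why the hypothesis is $qr>(l+1)p$ rather than $qr>lp$); and part (2) handles the interaction of $w$ with $c=\sum_kc_k$ needed to keep the $qr$ matrix units annihilating the conjugated element from the left. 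A single application of Lemma \ref{switch} to one partial isometry, as your sketch suggests, gives no control of the form ``$w$ moves each block by at most one,'' so you cannot verify the banding after conjugation. Your sketch omits Lemma \ref{n-shifting} entirely, and with it the mechanism that makes all three hypotheses of Proposition \ref{trangular-invertible} hold at once; this is the crux of the paper's argument rather than routine bookkeeping.
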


\begin{proof}
By definition, there is $b\in A^+\setminus\{0\}$ such that 
\begin{equation}\label{perp-red}
 ba = ab = 0;
\end{equation}
and there exists an order zero c.p.c.~map $$\phi':\mathrm{M}_{pq}(\Comp) \to A,$$ where $p, q \in\mathbb{N}$, and there exist $r, l\in\mathbb N$ such that, with $$e'_i:=\phi'(e_{i, i}),\quad i=1, 2, ..., pq,$$  
$$s_k:=e'_{(k-1)p+1} + \cdots + e'_{(k-1)p+r}, \quad k=1, ..., q,$$ and 
$$E_k:=e'_{(k-1)p+1} + \cdots + e'_{(k-1)p+p},\quad k=1, ..., q,$$ one has
         \begin{enumerate} 
                  
         \item\label{0-diag-perp-3b}  $E_{k_1}aE_{k_2} = 0$, $k_2-k_1 \geq l$, $1\leq k_1, k_2 \leq q$,
         
          \item\label{0-lr-control} $qr > (l+1)p$,
         
         \item\label{0-diag-perp-3c} there are positive elements $c_k, d_k$, $k=1, 2, ..., q$, with norm $1$  such that
         \begin{enumerate}
         
         \item\label{0-prop-cond-1a} $c_{k}, d_{k}   \in \overline{bAb},$
                  
         \item\label{0-diag-perp-3e} 
         $c_{k} \perp s_k$ and $c_{k} \perp d_k $,
         
         \item\label{0-diag-perp-3f}  
         $c_{k} E_k = c_{k}$ and $d_{k} E_k = d_{k}$, and 
         
          \item\label{0-diag-perp-3g} 
          $s_k \precsim c_k$ and $1_A-h' \precsim d_k,$
          where $h':= \phi'(1_{pq})$.
         \end{enumerate}
\end{enumerate}

As we shall show below, it follows that there exist an extendable order zero c.p.c.~map $$\phi: \mathrm{M}_{pq}(\Comp) \to A$$ 
and unitaries $u, w\in A$ such that, with
$$h:=\phi(1_{pq})\quad\mathrm{and}\quad e_{i} := \phi(e_{i, i}),\quad i=1, 2, ..., pq,$$
we have  
\begin{itemize}

\item $(1_A-h)(uw^*au^*) =0,$ 

\item $e_i(uw^*au^*)e_j = 0,$ $j-i>(l+1)q,$ and

\item  there are $e_{i_1}, ..., e_{i_{qr}} $ such that $$e_{i_j}(uw^*au^*) = (uw^*au^*) e_{i_j} = 0,\quad j=1, 2, ..., qr.$$

\end{itemize}

It then follows from Proposition \ref{trangular-invertible} (with $d=(l+1)p$ and $m=qr$) and Condition (\ref{0-lr-control}) above that $$uw^*au^*\in \overline{\mathrm{GL}(A)}.$$ Since $u, w$ are unitaries, one has $a\in \overline{\mathrm{GL}(A)}$,  and the proposition follows.

Let us verify the assertion. By Condition (\ref{0-prop-cond-1a}) and \eqref{perp-red} 
\begin{equation}\label{0-zero-C}
c_{k}a = ac_{k} = d_{k}a = ad_{k} = 0,\quad k=1, ..., q.
\end{equation}

Consider the positive element $f_\frac{1}{2}(h')$, and note that $1_A- f_{\frac{1}{2}}(h') \precsim 1_A-h'$. Then, by Condition (\ref{0-diag-perp-3g}), one has
$$1_A-f_{\frac{1}{2}}(h') \precsim d_{k},\quad k=1, 2, ..., q,$$ and therefore, by Proposition 2.4(iv) of \cite{RorUHF2} (see Lemma \ref{rordom-lem} above), there are $$v_1, v_2, ..., v_q \in A$$ such that
\begin{equation}\label{0-ladder-v}
 v_k^*v_k = f_{\frac{1}{2}}(1_A-f_{\frac{1}{2}}(h'))\quad\mathrm{and}\quad v_kv_k^* \in \mathrm{Her}(d_k)\subseteq \mathrm{Her}(b), \quad k=1, ..., q.
 \end{equation}
It follows from Condition (\ref{0-diag-perp-3f}) that $f_{\frac{1}{2}}(1_A-f_{\frac{1}{2}}(h')) \perp d_k;$ then, using Condition (\ref{0-diag-perp-3f}) again, one has $$f_{\frac{1}{2}}(1_A-f_{\frac{1}{2}}(h'))\perp v_kv^*_k\quad\mathrm{and} \quad   v_kv^*_k \in \mathrm{Her}(E_k),\quad 1\leq k\leq q,$$ and hence the elements 
$$v^*_1v_1, \ v_1v_1^*,\ v_2v_2^*, ..., v_qv_q^*$$ are mutually orthogonal.
Applying Lemma \ref{n-shifting} to $v_1, v_2, ..., v_q$, one obtains a unitary $w\in A$ with the properties of Lemma \ref{n-shifting}.

By Condition (\ref{0-diag-perp-3g}),
\begin{equation}\label{0-small-unit}
s_k \precsim c_{k},\quad k=1, ..., q.
\end{equation} 
Since $s_k,  c_k \in \overline{E_kAE_k}$, one may assume that the Cuntz sub-equivalences  \eqref{0-small-unit} hold in the hereditary sub-C*-algebra $\overline{E_kAE_k}$. By Proposition 2.4(iv) of \cite{RorUHF2} (see Lemma \ref{rordom-lem}), there is $z_k\in \overline{E_kAE_k}$ such that
\begin{equation}\label{0-unit-twist}
z_k^*z_k = f_\frac{1}{8}(s_k) \quad\mathrm{and} \quad z_kz_k^* \in \mathrm{Her}(c_k) \subseteq \mathrm{Her}(b).
\end{equation}
Note that, by Condition \eqref{0-diag-perp-3f}, 
\begin{equation}\label{location-zk}
z_k\in \overline{f_{\frac{1}{8}}(E_k)Af_{\frac{1}{8}}(E_k)}.
\end{equation}

By Condition (\ref{0-diag-perp-3e}),  
$$z_k^*z_k=f_{\frac{1}{8}}(s_k)  \perp \overline{c_{k}Ac_k} \ni z_kz_k^*.$$ 
Since $f_{\frac{1}{8}}(s_k)f_{\frac{1}{4}}(s_k)  = f_{\frac{1}{4}}(s_k)$, applying Lemma \ref{switch} to $v=z_k$, one has that, with 
\begin{equation}\label{const-uk}
u_k := \cos(\frac{\pi}{2}(z_kz_k^*+z_k^*z_k)) + z_k^*g(z_kz_k^*) - z_kg(z_k^*z_k),
\end{equation}
where 
$g(t)=\sin ({\pi t/2)}/{\sqrt{t}}$, $t\in(0, 1]$,
the element $u_k\in \mathrm{C}^*(z_k, 1)$ is a unitary such that 
$$u_k^*f_{\frac{1}{4}}(s_k)u_k \in \mathrm{Her}(c_k)\subseteq\mathrm{Her}(b).$$

By \eqref{location-zk}, one has 
 $u_k\in \overline{f_{\frac{1}{8}}(E_k)Af_{\frac{1}{8}}(E_k)} + \Comp 1_A$, and hence
$$u_k^* E_k u_k \in \overline{E_kAE_k},$$ 
$$u_k^* a u_k = a, \quad a\in\overline{E_{k'}AE_{k'}},\  k \neq k',$$
\begin{equation}\label{inv-1/16-0}
u_k^* f_{\frac{1}{8}}(E_k) u_k \in \overline{f_{\frac{1}{8}}(E_k)Af_{\frac{1}{8}}(E_k)}, \quad \mathrm{and}\quad [u_k, f_{\frac{1}{16}}(E_k)] = 0. 
\end{equation}
In particular, with $$u:=\prod_{k=1}^q u_k,$$ one has
 \begin{equation}\label{same-block}
 u^*E_ku \in\overline{E_kAE_k},\quad 1\leq k\leq q,
 \end{equation}
 \begin{equation}\label{inv-1/16}
 [u, f_{\frac{1}{16}}(E_k)] = 0,\quad 1\leq k\leq q,
 \end{equation}
 and
 \begin{equation}\label{0-prop-small-zero-unit}
 u^*f_{\frac{1}{4}}(s_k)u \in \mathrm{Her}(c_k)\subseteq\mathrm{Her}(b), \quad 1\leq k\leq q.
 \end{equation}

Consider the positive element $1_A-f_{\frac{1}{4}}(h')$, and note that $$(1_A-f_{\frac{1}{4}}(h')) v_1^*v_1 = (1_A-f_{\frac{1}{4}}(h')) f_{\frac{1}{2}}(1_A-f_{\frac{1}{2}}(h')) = 1_A-f_{\frac{1}{4}}(h').$$ It follows from Lemma \ref{n-shifting}(3) that
$$(1_A-f_{\frac{1}{4}}(h'))w^* \in \overline{A(v_1v_1^*)} \subseteq  \overline{A d_1}$$ and therefore, by \eqref{0-zero-C},
\begin{equation}\label{0-shrink-h}
(1_A- f_{\frac{1}{4}}(h'))w^*a = 0.
\end{equation}

Since $v_1^*v_1, E_k \in\mathrm{C}^*(e'_1, ..., e'_{pq})$, and this is a commutative algebra in view of Condition \eqref{0-diag-perp-3f} and \eqref{0-ladder-v}, one has
$$[v_1^*v_1, E_k] = 0\quad\mathrm{and}\quad (v_kv_k^*) E_k = v_kv_k^*,\quad k=1, 2, ..., q.$$ 
It then follows from Lemma \ref{n-shifting}(1) that
$$E_kw^* \in \overline{A(E_1 + E_k + E_{k+1})},\quad k=1, 2, ..., q-1,$$ and hence, by Condition (\ref{0-diag-perp-3b}), for any $k_2-k_1 \geq l+1$, where $1\leq k_1, k_2\leq q$, one has that 
$E_{k_1}w^* a E_{k_2} = 0,$
and then,  by \eqref{same-block},
\begin{equation*}
E_{k_1}(uw^*au^*) E_{k_2}  =  u(u^*E_{k_1} u)w^*a(u^* E_{k_2}  u)u^* \in u\overline{\overline{E_{k_1}AE_{k_1}}w^*a \overline{E_{k_2}AE_{k_2}} }u^* = \{0\}.
\end{equation*}
In particular, 
\begin{equation}\label{0-orth-shrink}
f_{\frac{1}{4}}(E_{k_1})(uw^*au^*) f_{\frac{1}{4}}(E_{k_2}) = 0,\quad k_2-k_1 \geq l+1.
\end{equation}


Consider the sum $$c:=\sum_{k=1}^q c_{k}.$$ 
Note that $c\in \overline{bAb}.$
By Condition (\ref{0-diag-perp-3f}),  one has that $ch' = c$; in particular, $[c, h'] = 0$, and hence, by \eqref{0-ladder-v}, $[c, v^*_1v_1] = 0.$ Also note
$ c\perp v_qv_q^* \in \mathrm{Her}(d_q).$   Then, it follows from Lemma \ref{n-shifting}(2) (with $c$ in the place of $d$ and $\overline{bAb}$ in the place of $D$) that 
\begin{equation}\label{0-zero-switch}
cw^* \in\overline{A(\overline{bAb})}.
\end{equation}
By \eqref{0-prop-small-zero-unit}, for each $k=1, 2, ..., q$ and $i=1,..., r$, one has 
\begin{equation}\label{0-zero-unit}
u^*f_{\frac{1}{4}}(e'_{(k-1)p+i})u \leq u^*f_{\frac{1}{4}}(s_k)u\in \mathrm{Her}(c_k) \subseteq \overline{cAc}\subseteq \overline{bAb}. 
\end{equation} 
From this, together with \eqref{0-zero-switch}, follows
$$ (u^*f_{\frac{1}{4}}(e'_{(k-1)p+i})u) w^*\in (\overline{cAc})w^*\subseteq \overline{cA{cw^*}}  \subseteq \overline{A(\overline{bAb})}.$$ Hence, 
$$f_{\frac{1}{4}}(e'_{n(k-1)+i})(uw^*au^*) = u((u^*f_{\frac{1}{4}}(e'_{(k-1)p+i})u)w^*)au^* \in u\overline{A(\overline{bAb})}au^*  =  \{0\};$$
on the other hand, by \eqref{0-zero-unit},
$$(uw^*au^*)f_\frac{1}{4}(e'_{(k-1)p+i}) = uw^*(a(u^*f_{\frac{1}{4}}(e'_{(k-1)p+i})u))u^* \in uw^*(a(\overline{bAb}))u^* = \{0\}.$$
Thus, for any $k=1, ..., q$ and $i=1, ..., r$, 
\begin{equation}\label{0-zero-unit-shrink}
f_{\frac{1}{4}}(e'_{(k-1)p+i})uw^*au^* = 0
\quad\mathrm{and}\quad
uw^*au^*f_{\frac{1}{4}}(e'_{(k-1)p+i}) = 0.
\end{equation}

Let us show that also 
\begin{equation}\label{0-shink-h-u}
u^*(1_A- f_{\frac{1}{4}}(h'))uw^*a = 0.
\end{equation}
By \eqref{inv-1/16}, one has
\begin{eqnarray}\label{decomp-zero}
 && u^*(1_A- f_{\frac{1}{4}}(h'))u \\
& = &  1_A - u^*\sum_{k=1}^q\sum_{i=1}^p f_{\frac{1}{4}}(e'_{(k-1)p+i}) u  \nonumber \\
& = & (1_A - f_{\frac{1}{16}}(h')) + f_{\frac{1}{16}}(h') - u^* \sum_{k=1}^q\sum_{i=1}^p  f_{\frac{1}{4}}(e'_{(k-1)p+i}) u \nonumber \\
& = & (1_A - f_{\frac{1}{16}}(h')) + \sum_{k=1}^q f_{\frac{1}{16}}(E_k) -   u^* \sum_{k=1}^q \sum_{i=1}^p f_{\frac{1}{4}}(e'_{(k-1)p+i}) u \nonumber\\
& = & (1 - f_{\frac{1}{16}}(h')) + u^*(\sum_{k=1}^q f_{\frac{1}{16}}(E_k) -   \sum_{k=1}^q \sum_{i=1}^p f_{\frac{1}{4}}(e'_{(k-1)p+i})) u \nonumber\\
& = & (1_A - f_{\frac{1}{16}}(h')) + \sum_{k=1}^q \sum_{i=1}^p u_k^* (f_{\frac{1}{16}}(e'_{(k-1)p+i})-f_{\frac{1}{4}}(e'_{(k-1)p+i})) u_k \nonumber\\
& = & (1_A - f_{\frac{1}{16}}(h')) + \sum_{k=1}^q \sum_{i=1}^p u_k^* \lambda_{k, i} u_k, \nonumber
\end{eqnarray}
where $$\lambda_{k, i} := f_{\frac{1}{16}}(e'_{(k-1)p+i})-f_{\frac{1}{4}}(e'_{(k-1)p+i}), \quad k=1, ..., q,\ i=1, ..., p.$$

Consider the elements
\begin{equation*}
(u_k^* \lambda_{k, i} u_k) w^*a,\quad k=1, ..., q,\ i=1, ..., p.
\end{equation*}
If $i= r+1, ..., p$, then, by \eqref{0-unit-twist},
$$z_k \lambda_{k, i} = \lambda_{k, i} z_k =0;$$ hence, by \eqref{const-uk},
$$[u_k, \lambda_{k, i}] = 0.$$ 
Since $\lambda_{k, i} \subseteq \mathrm{Her}(1-f_{\frac{1}{4}}(h'))$, together with \eqref{0-shrink-h}, one has
$$(u_k^* \lambda_{k, i} u_k) w^*a =  \lambda_{k, i} w^*a = 0.$$
If $i=1, ..., r$, then, by \eqref{const-uk},
\begin{eqnarray*}
\lambda_{k, i} u_k w^*a & = & \lambda_{k, i} \cos(\frac{\pi}{2}(z_kz_k^*+z_k^*z_k))w^*a + \lambda_{k, i} z_k^*g(z_kz_k^*) w^*a -  \lambda_{k, i} z_kg(z_k^*z_k) w^*a.
\end{eqnarray*}
By \eqref{0-unit-twist} and \eqref{0-zero-switch},
$$ \lambda_{k, i} z_k^*g(z_kz_k^*) w^*a  \in   \lambda_{k, i} z_k^*(\overline{cAc}) w^*a = \{0\}.$$
Using \eqref{0-zero-switch} again, one has $\lambda_{k, i} z_k = 0$, and hence $$ \lambda_{k, i} z_kg(z_k^*z_k) w^*a =  0. $$
Since $[\lambda_{k, i}, z_k^*z_k] = 0$ (by \eqref{0-zero-switch}), one has
$$\lambda_{k, i} (\frac{\pi}{2}z_k^*z_k)^{2n} \in \mathrm{Her}(\lambda_{k, i})\subseteq \mathrm{Her}(1-f_{\frac{1}{4}}(h')),\quad n=0, 1, ..., $$ and therefore, by \eqref{0-shrink-h},
\begin{eqnarray*}
\lambda_{k, i} \cos(\frac{\pi}{2}(z_kz_k^*+z_k^*z_k))w^*a & = &  \lambda_{k, i} \sum_{n=0}^\infty\frac{1}{(2n)!}(\frac{\pi}{2}(z_kz_k^*+z_k^*z_k))^{2n}w^*a\\
& = &\sum_{n=0}^\infty\frac{1}{(2n)!} \lambda_{k, i} (\frac{\pi}{2}z_k^*z_k)^{2n}w^*a \\
& = & 0. 
\end{eqnarray*}
This shows that
$$\lambda_{k, i} u_k w^*a = 0,\quad k=1, ...,q,\  i=1, 2, ..., r,$$
and hence 
\begin{equation}\label{tent-zero}
(u_k^* \lambda_{k, i} u_k) w^*a = 0,\quad k=1, ..., q,\ i=1, ..., p.
\end{equation}
This, together with \eqref{decomp-zero} and \eqref{0-shrink-h}, implies
\begin{eqnarray*}
u^*(1_A - f_{\frac{1}{4}}(h'))u w^*a = (1_A - f_{\frac{1}{16}}(h'))w^*a + \sum_{k=1}^q \sum_{i=1}^p u_k^* \lambda_{k, i} u_kw^*a = 0,
\end{eqnarray*}
which is the desired equation \eqref{0-shink-h-u}.


Hence, with $$\phi := f_{\frac{1}{4}}(\phi')\quad\mathrm{and}\quad h:=\phi(1_{pq}),$$ 
by \eqref{0-shink-h-u}, one has
$$(1_A-h)(uw^*au^*) = 0.$$

Set $$ \phi(e_{i, i}) = e_i,\quad i=1, 2, ..., pq.$$
For any $e_i, e_j$, $j - i >(l+1)p$, there are $1\leq k_1, k_2\leq q$ with $k_2-k_1 \geq l+1$ such that $$e_i \leq f_{\frac{1}{4}}(E_{k_1})\quad \mathrm{and}\quad e_j\leq f_{\frac{1}{4}}(E_{k_2}).$$ Then it follows from \eqref{0-orth-shrink} that $$e_i(uw^*au^*)e_j \in \overline{f_{\frac{1}{4}}(E_{k_1})Af_{\frac{1}{4}}(E_{k_1})} uw^*au^* \overline{f_{\frac{1}{4}}(E_{k_2})A f_{\frac{1}{4}}(E_{k_2})} = \{0\}.$$

Set $$ \{(k-1)p+ i: k=1, ..., q,\  i=1,...,r\} = \{i_1, i_2, ..., i_{rq}\}.$$ Then it follows from \eqref{0-zero-unit-shrink} that
$$e_{i_j}(u(w^*a)u^*) = (u(w^*a)u^*) e_{i_j} = 0,\quad j=1, 2, ..., rq,$$
as desired. Finally, it is clear that $\phi$ is extendable. This proves that $\phi$ verifies the assertion.
\end{proof}

\begin{thm}\label{thm-ab-tsr1}
Let $A$ be a unital C*-algebra with Property (D). Then $$\mathrm{ZD}(A) \subseteq \overline{\mathrm{GL}(A)}.$$ 
If, moreover, $A$ is finite, then $A = \overline{\mathrm{GL}(A)}$ (in other words, $\mathrm{tsr}(A) = 1$).
\end{thm}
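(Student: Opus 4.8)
The plan is to deduce the statement from the two results already in hand: Proposition \ref{prop-ab-tsr1}, which places every $\mathcal D_0$-operator in $\overline{\mathrm{GL}(A)}$, and the defining content of Property (D) (Definition \ref{defn-Prop-D}), which says that any $a\in\mathrm{ZD}(A)$ can be moved, after multiplication by unitaries on both sides, arbitrarily close to a $\mathcal D_0$-operator. The first half of the theorem is then immediate, and the ``in particular'' clause is a short perturbation argument that exploits finiteness.

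First I would establish $\mathrm{ZD}(A)\subseteq\overline{\mathrm{GL}(A)}$. Fix $a\in\mathrm{ZD}(A)$ and $\eps>0$. By Property (D) there are unitaries $u_1,u_2$ and a $\mathcal D_0$-operator $a'$ with $\norm{u_1au_2-a'}<\eps$, and by Proposition \ref{prop-ab-tsr1} there is an invertible $y$ with $\norm{a'-y}<\eps$. Since conjugation by unitaries is isometric and preserves invertibility, $u_1^*yu_2^*\in\mathrm{GL}(A)$ and $\norm{a-u_1^*yu_2^*}=\norm{u_1au_2-y}<2\eps$. As $\eps$ is arbitrary, $a\in\overline{\mathrm{GL}(A)}$.

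Now assume $A$ is finite. Since $\mathrm{GL}(A)\subseteq\overline{\mathrm{GL}(A)}$ trivially, it suffices to show that every non-invertible $a\in A$ is a norm-limit of elements of $\mathrm{ZD}(A)$; the inclusion just proved then puts it in $\overline{\mathrm{GL}(A)}$. The elementary input is that in a finite unital C*-algebra every isometry is a unitary, so $x$ is invertible iff $x^*x$ is invertible iff $xx^*$ is invertible (if $x^*x$ is invertible, the partial isometry $u=x\abs{x}^{-1}$ in the polar decomposition lies in $A$, is an isometry, hence a unitary, making $x=u\abs{x}$ invertible; the remaining equivalence follows by passing to $x^*$). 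I plan a two-step perturbation. Because $a$ is non-invertible, $a^*a$ is non-invertible, so $0\in\mathrm{sp}(a^*a)$; set $a':=af_\eps(a^*a)$ and $d_2:=1_A-f_{\eps/2}(a^*a)$. Then $d_2\neq0$, the support conditions on the cut-off functions give $f_\eps\cdot(1-f_{\eps/2})=0$ and hence $a'd_2=0$, while a routine estimate gives $\norm{a-a'}=\norm{a(1_A-f_\eps(a^*a))}\leq\sqrt\eps$. The element $a'$ now has a nonzero right annihilator, so it is not left-invertible, hence non-invertible; by finiteness $a'(a')^*$ is non-invertible and $0\in\mathrm{sp}(a'(a')^*)$. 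Repeating the construction on the left, set $a'':=f_{\eps'}(a'(a')^*)\,a'$ and $d_1:=1_A-f_{\eps'/2}(a'(a')^*)$, obtaining $d_1\neq0$, $d_1a''=0$, and $\norm{a'-a''}\leq\sqrt{\eps'}$.

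The crux I would emphasize is that the second (left) perturbation does not destroy the right annihilator: since $a''=f_{\eps'}(a'(a')^*)a'$ and $a'd_2=0$, one still has $a''d_2=0$. Thus $a''$ carries both a nonzero left annihilator $d_1$ and a nonzero right annihilator $d_2$, i.e. $a''\in\mathrm{ZD}(A)$, with $\norm{a-a''}\leq\sqrt\eps+\sqrt{\eps'}$ arbitrarily small. Hence every non-invertible element lies in $\overline{\mathrm{ZD}(A)}\subseteq\overline{\mathrm{GL}(A)}$, and together with the invertibles this gives $A=\overline{\mathrm{GL}(A)}$, that is $\mathrm{tsr}(A)=1$. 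The only genuinely delicate points are the bookkeeping of the functions $f_\eps$ so that the products $a'd_2$ and $d_1a''$ vanish exactly, and the repeated appeal to finiteness to guarantee that $\mathrm{sp}(a^*a)$ and $\mathrm{sp}(a'(a')^*)$ really contain $0$; everything else is a direct computation.
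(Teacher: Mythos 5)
Your proof is correct and follows essentially the same route as the paper: Property (D) plus Proposition \ref{prop-ab-tsr1} give $\mathrm{ZD}(A)\subseteq\overline{\mathrm{GL}(A)}$, and finiteness reduces the general case to approximating non-invertible elements by two-sided zero divisors. The only difference is that where the paper cites Proposition 3.2 of \cite{RorUHF} for the inclusion $A\setminus\mathrm{GL}(A)\subseteq\overline{\mathrm{ZD}(A)}$, you reprove that lemma directly (correctly, via the two-step functional-calculus cut-off and the fact that isometries in a finite C*-algebra are unitaries), which makes the argument self-contained but is not a different approach.
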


\begin{proof}
Let $a\in \mathrm{ZD}(A)$, and fix an arbitrary $\eps>0$ for the time being. Since $A$ has the Property (D), there exist unitaries $u_1, u_2\in A$ and a $\mathcal D_0$-element $a'\in A$ such that
$$\norm{u_1au_2 - a'} < \eps.$$
By Proposition \ref{prop-ab-tsr1}, 
$a'\in\overline{\mathrm{GL(A)}}.$
Since $u_1$, $u_2$ are unitaries, it follows that $$u_1^*a'u_2^* \in \overline{\mathrm{GL}(A)},$$ and hence  $$\mathrm{dist}(a, \overline{\mathrm{GL}(A)})< \eps.$$ Since $\eps>0$ is arbitrary, one has that
$a \in  \overline{\mathrm{GL}(A)},$ and therefore
\begin{equation}\label{ZD-contain}
\mathrm{ZD}(A) \subseteq \overline{\mathrm{GL}(A)}.
\end{equation}

If, moreover, the C*-algebra $A$ is finite, then by Proposition 3.2 of \cite{RorUHF}, $$A\setminus\mathrm{GL}(A) \subseteq \overline{\mathrm{ZD}(A)};$$
with this, together with \eqref{ZD-contain}, we have 
$$A = \mathrm{GL}(A) \cup (A\setminus\mathrm{GL}(A)) \subseteq \mathrm{GL}(A) \cup \overline{\mathrm{ZD}(A)}  \subseteq \mathrm{GL}(A) \cup \overline{\mathrm{GL}(A)} = \overline{\mathrm{GL}(A)}.$$
\end{proof}

\section{Non-invertible elements and zero divisors of $\mathrm{C}(X)\rtimes\Gamma$}\label{D-1}
In the following two sections, we shall show that the C*-algebra $\mathrm{C}(X)\rtimes\Gamma$ has Property (D) if $(X, \Gamma)$ has the (URP) and (COS). Since $\mathrm{C}(X)\rtimes\Gamma$ is finite, it follows that $\mathrm{C}(X)\rtimes\Gamma$ has stable rank one by Theorem \ref{thm-ab-tsr1}.

Recall that if $B$ is a sub-C*-algebra of $A$, a conditional expectation from $A$ to $B$ is a completely positive linear contraction $\mathbb E: A \to B$ such that $$\mathbb E(b) = b,\quad\mathbb E(ba) = b\mathbb E(a),\quad \mathrm{and}\quad \mathbb E(ab) = \mathbb E(a) b,\quad b\in B,\ a\in A.$$ (By \cite{Tomoyama-proj}, the last two equations and the complete positivity are redundant.)

If $(X, \Gamma)$ is a free dynamical system, and if $\mathbb E: \mathrm{C}(X)\rtimes \Gamma \to\mathrm{C}(X)$ is a conditional expectation, where $\mathrm{C}(X)\rtimes \Gamma$ is a crossed-product C*-algebra, then $$\mathbb E(u_\gamma) = 0,\quad \gamma\in\Gamma\setminus\{e\}.$$ Indeed, let $\gamma\in\Gamma\setminus\{e\}$ and consider $\mathbb E(u_\gamma)\in\mathrm{C}(X)$. Note that for all $g\in\mathrm{C}(X)$, since $u_\gamma^*gu_\gamma \in\mathrm{C}(X)$, $$g\mathbb E(u_\gamma) = \mathbb{E}(gu_\gamma)  = \mathbb{E}(u_\gamma u_\gamma^*gu_\gamma) = \mathbb{E}(u_\gamma)(u_\gamma^*gu_\gamma).$$ Assume that $\mathbb E(u_\gamma) \neq 0$. Then there is $x_0\in X$ such that $\mathbb E(u_\gamma)(x_0) \neq 0$. Since $(X, \Gamma)$ is free, one has $x_0 \neq x_0\gamma^{-1}$. Pick $g\in \mathrm{C}(X)$ such that $g(x_0) = 1$ and $g(x_0\gamma^{-1}) = 0$. Then
$$g(x_0)\mathbb E(u_\gamma)(x_0) = \mathbb E(u_\gamma)(x_0) \neq 0 = \mathbb E(u_\gamma)(x_0) g(x_0\gamma^{-1}) = \mathbb E(u_\gamma)(x_0) (u_\gamma^* gu_\gamma)(x_0),$$ which is a contradiction.

If $\Gamma$ is amenable, then a conditional expectation $\mathbb E: \mathrm{C}(X)\rtimes\Gamma \to \mathrm{C}(X)$ always exists, and is not only unique (see above) but also faithful (see, for instance, Proposition 4.1.9 of \cite{BO-book}). 

\begin{lem}\label{pre-alg-red}
Let $(X, \Gamma)$ be a free and minimal topological dynamical system, where $\Gamma$ is a countable discrete group and $X$ is a compact Hausdorff space. Denote by $A=\mathrm{C}(X)\rtimes\Gamma$ the crossed product C*-algebra, and assume that there is a faithful conditional expectation $\mathbb{E}: A \to \mathrm{C}(X)$.

Let $a\in A$ such that $ba=0$ for some non-zero positive element $b$. Then, for any $\eps>0$, there is unitary $u \in A$ and a (non-empty) open set $E\subseteq X$ such that $$\norm{\varphi_Eua} < \eps.$$ 
\end{lem}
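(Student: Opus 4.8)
The plan is to use the annihilation relation algebraically, reducing the problem to producing a single positive \emph{function} that is Cuntz-subordinate to $b$, and then converting the resulting witness into an honest unitary.

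\smallskip

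\emph{Reductions.} Since $b\ge 0$ and $ba=0$ we also have $a^*b=0$, whence $a^*ba=0$ and $b^{\frac12}a=0$; more generally $f(b)a=0=a^*f(b)$ for every $f\in\mathrm{C}_0((0,\norm b])$ (approximate $f(b)$ by $b\,q(b)$ and use that $b$ and $q(b)$ commute). Consequently $a^*\,\mathrm{Her}(b)\,a=\{0\}$, because every element of $\overline{bAb}$ is a limit of elements $bx_nb$. I will assume $\norm b\le 1$.

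\smallskip

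\emph{A function under $b$.} As $\mathbb E$ is faithful and $b\ge 0$, the function $g:=\mathbb E(b)\in\mathrm{C}(X)^+$ is nonzero; fix $x_0$ with $g(x_0)=\norm g>0$. Approximate $b$ by a finitely supported $b'=\sum_{\gamma\in F}b_\gamma u_\gamma$ with $\norm{b-b'}<\rho$, where $b_\gamma=\mathbb E(bu_\gamma^*)$ and $b_e=g$. By freeness the points $x_0\gamma^{-1}$, $\gamma\in F$, are distinct, so for a small bump $h\in\mathrm{C}(X)^+$, $0\le h\le 1$, $h(x_0)=1$, supported near $x_0$, the supports of $h$ and of $h\circ\gamma$ are disjoint for $\gamma\in F\setminus\{e\}$. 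Hence every off-diagonal term of $hb'h$ vanishes and $hb'h=h^2g$, so $\norm{hbh-h^2g}<\rho$. Since $hbh=(b^{\frac12}h)^*(b^{\frac12}h)\precsim b^{\frac12}h^2b^{\frac12}\le b$ and $h^2g$ is a nonzero positive function, the standard norm comparison gives $(h^2g-\rho)_+\precsim hbh\precsim b$ once $\rho<\norm g$; as $(h^2g-\rho)_+\neq 0$, this yields $\varphi_{E}\precsim b$ for some nonempty open $E$.

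\smallskip

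\emph{A witness killing $a$.} Apply Lemma \ref{rordom-lem} to $\varphi_E\precsim b$: there are $\delta,\eps>0$ and $v=f^{\frac12}_\eps(b)r$ with $v^*v=f_\delta(\varphi_E)$ (a function, equal to $1$ on the open set $V$ where $\varphi_E\equiv 1$) and $vv^*\in\mathrm{Her}(b)$. Because $a^*\,\mathrm{Her}(b)\,a=\{0\}$ we get $a^*vv^*a=0$, hence $v^*a=0$, and thus $\varphi_{E'}v^*a=0$ for every $E'\subseteq V$.

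\smallskip

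\emph{The crux: from $v$ to a genuine unitary.} I would now try to find a unitary $u$ agreeing with $v^*$ after a cut, so that $\varphi_{E'}ua\approx\varphi_{E'}v^*a=0$. When $b$ is not ``base full'', i.e. $S_b:=\overline{\bigcup_\gamma \mathrm{supp}\,\mathbb E(bu_\gamma^*)}\neq X$, this can be made exact: using minimality (density of the orbit of $x_0$) and a group translation $u_\gamma(\,\cdot\,)u_\gamma^*$, the localized function above can be moved so that $\varphi_E\precsim b$ holds with $\overline{E}\cap S_b=\varnothing$; then $v^*v=f_\delta(\varphi_E)\perp vv^*$, Lemma \ref{switch} produces an honest unitary $w\in\mathrm{C}^*(v,1)$, and a direct computation (using $\varphi_{E'}vv^*\varphi_{E'}=0$) gives $\varphi_{E'}wa=0$ exactly. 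The genuine difficulty is the base-full case $S_b=X$, which really occurs (for instance $a=1_A-q$, $b=q$ for a projection $q$ with $0<\mathbb E(q)<1$): here there is no room to create orthogonality inside $X$, and completing the ``co-isometry on $E'$'' $v^*$ to a unitary of $A$ would require cancellation or comparison, neither of which is available at this stage. I expect this to be the main obstacle. To overcome it one must invoke \emph{minimality} beyond mere simplicity: cover $X=\bigcup_{\gamma\in\Lambda}W\gamma$ by finitely many translates of a small open set, build from this a single Rokhlin-type tower and an associated ladder of partial isometries, and close everything into a unitary via Lemma \ref{n-shifting}, absorbing the unavoidable defect. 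It is precisely this approximate closing that forces, and accounts for, the ``$<\eps$'' in the conclusion.
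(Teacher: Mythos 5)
Your first three steps are sound, and you have correctly located the real difficulty: Lemma \ref{switch} needs $vv^*\perp v^*v$, while the witness $v$ coming from $\varphi_E\precsim b$ has $vv^*\in\mathrm{Her}(b)$, which need not be orthogonal to anything supported near $x_0$ when $b$ is ``base full''. But the proposal does not close this gap. The suggested remedy (cover $X$ by translates of $W$, build a Rokhlin-type tower, close the ladder with Lemma \ref{n-shifting}) is only a sketch, and it is not available here: the lemma assumes neither the (URP) nor any tower structure, and Lemma \ref{n-shifting} requires a ladder of partial isometries with mutually orthogonal ranges that you have not produced. Your closing diagnosis is also off: in the paper the unitary is exact (it comes from Lemma \ref{switch}), and the $\eps$ in the conclusion is due solely to replacing $b$ by a finitely supported $b'$, not to any ``approximate closing'' of a co-isometry.

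The paper dissolves the base-full obstruction before it arises by compressing $b'$ on the inside, rather than by trying to move $E$ off the support of $b$. Writing $b'=\sum_{\gamma\in\Gamma_0}f_\gamma u_\gamma$ with $\norm{b-b'}$ small and $\abs{f_e(x_0)}=1$, one chooses a neighbourhood $U$ of $x_0$ so small that $\overline{\bigcup_{\gamma\in\Gamma_0}U\gamma}\neq X$, an open set $W\neq X$ containing this closure, and sets $b'':=b'\varphi_U(b')^*/\norm{\mathbb{E}(b'\varphi_U(b')^*)}$. Then $\norm{b''a}$ is small because $b'\varphi_U(b')^*a\approx b'\varphi_U ba=0$, and, crucially, $\varphi_Wb''=b''$: the new annihilator is localized over a proper closed subset of $X$ no matter how full $b$ was. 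Freeness then gives a bump $h$ supported in a tiny neighbourhood $V$ of a point where $\mathbb{E}(b'')=1$ with $hb''h=h^2$, and minimality gives $\gamma_0$ with $V\gamma_0\cap W=\varnothing$; hence $v:=u_{\gamma_0}h(b'')^{1/2}$ satisfies $vv^*=u_{\gamma_0}h^2u_{\gamma_0}^*\perp v^*v=(b'')^{1/2}h^2(b'')^{1/2}$, Lemma \ref{switch} yields an honest unitary $u$, and $\norm{\varphi_Eua}\leq(\norm{b}+1)\norm{(b'')^{1/2}a}<\eps$ for a suitable open $E$ with $\varphi_Evv^*=\varphi_E$. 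The missing idea is precisely this inner compression $b'\mapsto b'\varphi_U(b')^*$; without it, or an equally concrete substitute, your argument does not go through in the very case you flag.
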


\begin{proof}
Since $\mathbb E$ is faithful, without loss of generality, one may assume $$\norm{a} = 1\quad\mathrm{and}\quad \norm{\mathbb{E}(b)}=1.$$

Pick $\eps'>0$ such that if $\norm{ca} < \eps'$ for some positive element $c$ with $\norm{c} \leq (\norm{b} +1)^2$, then $\norm{c^\frac{1}{2}a} < {\eps}/(\norm{b} + 1).$
Pick $\eps''\in (0, 1)$ such that
$$ (\norm{b}+\eps'')\eps'' < \eps',$$
and pick $b'\in \mathrm{C}_{\mathrm c}(\Gamma, \mathrm{C}(X))$ such that $$\norm{b-b'} < \eps''.$$
Since $\norm{\mathbb{E}(b)} = 1$, one may assume that $ \norm{\mathbb{E}(b')} = 1.$

Write $$b' = \sum_{\gamma\in\Gamma_0}f_\gamma u_\gamma$$ for a finite set $\Gamma_0\subseteq\Gamma$ with $\Gamma_0 = \Gamma_0^{-1}$, where $f_\gamma\in\mathrm{C}(X)$. Since $\mathbb{E}(b') = f_e$, one has that 
$\norm{f_e} =1,$ and then there is $x_0\in X$ such that 
\begin{equation*}
\abs{f_e(x_0)} =1.
\end{equation*} 
Pick a neighbourhood $U$ of $x_0$ such that
$
\overline{\bigcup_{\gamma\in\Gamma_0} U\gamma} \neq X,
$
and pick an open set $W\neq X$ such that $$\overline{\bigcup_{\gamma\in\Gamma_0} U\gamma} \subseteq W.$$ Therefore, there is a continuous function $\varphi_W: X \to [0, 1]$ such that
\begin{equation}\label{W-function}
\varphi_W^{-1}((0, 1]) = W\quad\mathrm{and}\quad \bigcup_{\gamma \in\Gamma_0} U\gamma  \subseteq \varphi_W^{-1}(1).
\end{equation}
Pick a continuous function $\varphi_U: X \to [0, 1]$ so that 
\begin{equation}\label{x0-1}
\varphi_U^{-1}((0, 1]) = U\quad\mathrm{and}\quad \varphi_U(x_0) = 1.
\end{equation}
Note that
\begin{eqnarray*}
b'\varphi_U(b')^* & = & (\sum_{\gamma\in\Gamma_0}f_\gamma u_\gamma) \varphi_U (\sum_{\gamma\in\Gamma_0}f_\gamma u_\gamma)^* \\
& = & (\sum_{\gamma\in\Gamma_0}f_\gamma u_\gamma) \varphi_U (\sum_{\gamma\in\Gamma_0} u^*_\gamma \overline{f_\gamma}) \\
& = & \sum_{\gamma, \gamma'\in\Gamma_0} f_{\gamma'}u_{\gamma'}\varphi_U u^*_\gamma \overline{f_\gamma} \\
& = & \sum_{\gamma, \gamma'\in\Gamma_0} f_{\gamma'}(\overline{f_\gamma}\circ(\gamma'\gamma^{-1})) (\varphi_{U}\circ\gamma') u_{\gamma'\gamma^{-1}}.
\end{eqnarray*}
Hence by \eqref{W-function}, 
\begin{equation}\label{W-as-1}
\varphi_W(b'\varphi_U(b')^*) = b'\varphi_U(b')^*.
\end{equation}
Also note that, by \eqref{x0-1},
\begin{eqnarray*}
\mathbb{E}(b'\varphi_U(b')^*)(x_0) & = & \sum_{\gamma\in\Gamma_0}\abs{f_\gamma(x_0)}^2\varphi_{U}(x_0\gamma) \geq \abs{f_e(x_0)}^2 = 1;
\end{eqnarray*}
and in particular,
\begin{equation}\label{ub-EP}
\norm{\mathbb{E}(b'\varphi_U(b')^*)} \geq 1 .
\end{equation}

Set $$\frac{1}{\norm{\mathbb E(b'\varphi_U(b')^*)}}b'\varphi_U (b')^* = b''.$$ 
Note that
$$\mathbb E(b'') = \frac{1}{\norm{\sum_{\gamma\in\Gamma_0}\abs{f_\gamma}^2\varphi_U}} \sum_{\gamma\in\Gamma_0}\abs{f_\gamma}^2 \gamma(\varphi_{U}) .$$ 
It follows that there is $y_0\in X$ such that $\mathbb E(b'')(y_0) = 1$. Perturbing $f_\gamma$, $\gamma\in\Gamma_0$, and $\varphi_U$ to be locally constant around $y_0$, we may suppose there is an open neighbourhood $V\ni y_0$ such that 
\begin{equation}\label{1-in-V}
\mathbb E(b'')(x) = \mathbb E(b'')(y_0) = 1,\quad x\in V.
\end{equation}
Moreover, since $(X, \Gamma)$ is free, one may choose $V$ small enough that
\begin{equation}\label{wandering-away}
V \cap V\gamma = \varnothing,\quad \gamma\in\Gamma_0^2\setminus\{e\},
\end{equation}
and since the action is minimal (so any orbit is dense), choosing $V$ even smaller, we may suppose there is $\gamma_0\in\Gamma$ such that
\begin{equation}\label{moving-far}
V\gamma_0 \cap W = \varnothing.
\end{equation}

Note that, by \eqref{ub-EP}, 
\begin{eqnarray*} 
\norm{b''a} & = & \norm{\frac{1}{\norm{\mathbb E(b'\varphi_U(b')^*)}}b'\varphi_U (b')^*a} \\
& = & \frac{1}{\norm{\mathbb E(b'\varphi_U(b')^*)}}\norm{b'\varphi_U (b')^*a} \\
& \approx_{(\norm{b}+\eps'')\eps''} & \frac{1}{\norm{\mathbb E(b'\varphi_U(b')^*)}}\norm{b'\varphi_U ba}=0.
\end{eqnarray*}
In other words,
\begin{equation}
\norm{b''a} \leq (\norm{b}+\eps'')\eps'' <\eps'.
\end{equation}
Since 
\begin{equation}\label{small-b''}
\norm{b''} \leq \norm{b'}^2 \leq (\norm{b}+1)^2,
\end{equation} 
by the choice of $\eps'$, one has
\begin{equation}\label{almost-perp}
\norm{(b'')^{\frac{1}{2}}a} < \frac{\eps}{\norm{b} + 1}.
\end{equation}

Now, choose a continuous function $h: X \to[0, 1]$ such that $h^{-1}((0, 1]) \subseteq V,$ and $h^{-1}(1)$ contains a neighbourhood of $y_0$. Note that $\norm{h} = 1$.

By \eqref{wandering-away}, 
$$h u_\gamma h = 0, \quad \gamma\in\Gamma_0^2\setminus\{e\},$$
and hence, writing $$b'' = \sum_{\gamma\in\Gamma_0^2} c_\gamma u_\gamma,$$  together with \eqref{1-in-V}, one has
\begin{equation}\label{cut-h}
hb''h = h (\sum_{\gamma\in\Gamma_0^2}c_\gamma u_\gamma)  h = \sum_{\gamma\in\Gamma_0^2}c_\gamma h u_\gamma h =c_e h^2 = \mathbb E(b'') h^2 = h^2.
\end{equation}

By \eqref{moving-far}, one has
$u_{\gamma_0} hu_{\gamma_0}^* \perp\varphi_W$ and hence, by \eqref{W-as-1},
\begin{equation}\label{perp-shift}
u_{\gamma_0} hu_{\gamma_0}^*  \perp b''.
\end{equation}

Consider 
$$v:=u_{\gamma_0} h (b'')^{\frac{1}{2}}.$$
Then, by \eqref{cut-h}, $$vv^* = u_{\gamma_0} h b''hu_{\gamma_0}^* = u_{\gamma_0} h^2 u_{\gamma_0}^*$$
and
$$v^*v =  (b'')^{\frac{1}{2}}  h^2 (b'')^{\frac{1}{2}}.$$

Pick an open set $E$ such that 
\begin{equation}\label{right-1}
\varphi_E vv^*= \varphi_E (u_{\gamma_0} h^2 u_{\gamma_0}^*) = \varphi_E.
\end{equation} 
(Such an $E$ exists because $h$ is constantly equal to $1$ in a neighbourhood of $y_0$.)

By \eqref{perp-shift}, $vv^*\perp v^*v$. By Lemma \ref{switch} and \eqref{right-1}, there is a unitary $u\in A$ such that
$$ (u^* \varphi_E u) (b'')^{\frac{1}{2}}  h^2 (b'')^{\frac{1}{2}}  = (u^* \varphi_E u) v^*v = u^* \varphi_E u. $$ 
Therefore, by \eqref{small-b''}, \eqref{almost-perp}, 
\begin{eqnarray*}
\norm{\varphi_Eua} & = & \norm{ u(u^*\varphi_Eu) a } = \norm{ u (u^* \varphi_E u) (b'')^{\frac{1}{2}}  h^2 (b'')^{\frac{1}{2}}  a } \\
& \leq & (\norm{b}+1)\norm{(b'')^{\frac{1}{2}}a}\\
& < & \eps,
\end{eqnarray*}
as desired.
\end{proof}

\begin{prop}\label{alg-red}
Let $(X, \Gamma)$ be a free and minimal topological dynamical system, where $\Gamma$ is a countable  discrete group and $X$ is a compact Hausdorff space. Consider the crossed product C*-algebra $A=\mathrm{C}(X)\rtimes\Gamma$. Assume that $A$ is finite and there is a faithful conditional expectation $\mathbb{E}: A \to \mathrm{C}(X)$.

Let $a\in A$ be a non-invertible element. Then, for any $\eps>0$, there exist $b\in \mathrm{C}_\mathrm{c}(\Gamma, \mathrm{C}(X))$, a (non-empty) open set $E\subseteq X$, and unitaries $u_1, u_2\in A$ such that 
$$\norm{u_1au_2-b} < \eps\quad
\textrm{and}\quad
\varphi_Eb = b\varphi_E = 0. $$
\end{prop}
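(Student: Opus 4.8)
The plan is to upgrade the one-sided reduction of Lemma~\ref{pre-alg-red} to a two-sided statement attached to a \emph{single} open set, using R\o rdam's fact that in a finite C*-algebra every non-invertible element is a limit of zero divisors. First I would reduce to a zero divisor: since $A$ is finite and $a$ is non-invertible, Proposition~3.2 of \cite{RorUHF} gives $a\in\overline{\mathrm{ZD}(A)}$, so I may pick $a_0\in\mathrm{ZD}(A)$ with $\norm{a-a_0}$ arbitrarily small; as conjugation by unitaries is isometric, it suffices to treat $a_0$, for which there are nonzero positive $d_1,d_2$ with $d_1a_0=0$ and $a_0d_2=0$. Applying Lemma~\ref{pre-alg-red} to the relation $d_1a_0=0$ yields a unitary $u$ and a nonempty open $E_1$ with $\norm{\varphi_{E_1}ua_0}$ small. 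Since $(ua_0)d_2=0$, equivalently $d_2(ua_0)^*=0$, applying the lemma to $(ua_0)^*$ and taking adjoints yields a unitary $v$ and a nonempty open $E_2$ with $\norm{ua_0v^*\varphi_{E_2}}$ small. Setting $c:=ua_0v^*$ (right multiplication by $v^*$ being isometric), both $\norm{\varphi_{E_1}c}$ and $\norm{c\varphi_{E_2}}$ are small.

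The obstacle at this point is that $E_1$ and $E_2$ need not intersect, whereas the conclusion requires one open set on both sides. I would resolve this using minimality. Picking $x\in E_2$, its orbit is dense and hence meets $E_1$, so $x\gamma\in E_1$ for some $\gamma$, giving $x\in E_1\gamma^{-1}\cap E_2$. Replacing $c$ by $c':=u_\gamma c=(u_\gamma u)a_0v^*$, which is still of the form (unitary)$\,\cdot a_0\cdot\,$(unitary), and using $u_\gamma\varphi_{E_1}u_\gamma^*=\varphi_{E_1}\circ\gamma$ (a cutoff for $E_1\gamma^{-1}$), the left condition becomes $\norm{(\varphi_{E_1}\circ\gamma)c'}=\norm{\varphi_{E_1}c'}$ small while the right condition $\norm{c'\varphi_{E_2}}=\norm{c\varphi_{E_2}}$ is untouched. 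Choosing a nonempty open $E$ with $\overline{E}\subseteq E_1\gamma^{-1}\cap E_2$ and a cutoff $\varphi_E$ dominated by multiples of $\varphi_{E_1}\circ\gamma$ and of $\varphi_{E_2}$ (possible since these functions are bounded below on $\overline{E}$ and commute in $\mathrm{C}(X)$), I obtain $\norm{\varphi_Ec'}<\delta$ and $\norm{c'\varphi_E}<\delta$ with $\delta$ as small as desired.

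Finally I would perform a genuinely two-sided cut. Approximate $c'$ by $b_0\in\mathrm{C}_{\mathrm c}(\Gamma,\mathrm{C}(X))$ with $\norm{c'-b_0}<\delta$. Let $V\subseteq E$ be the open set on which $\varphi_E\equiv1$ (Definition~\ref{defn-phi-E}(2)), pick a nonempty open $E'$ with $\overline{E'}\subseteq V$, and choose $\chi\in\mathrm{C}(X,[0,1])$ with $\chi\equiv1$ on $E'$ and $\mathrm{supp}\,\chi\subseteq V$; put $\psi:=1-\chi$ and $b:=\psi b_0\psi\in\mathrm{C}_{\mathrm c}(\Gamma,\mathrm{C}(X))$. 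Since two-sided multiplication by the function $\psi$ preserves the finite support of $b_0$, $b$ lies in $\mathrm{C}_{\mathrm c}(\Gamma,\mathrm{C}(X))$, and since $\psi\equiv0$ on $E'\supseteq\mathrm{supp}\,\varphi_{E'}$ one gets $\varphi_{E'}b=b\varphi_{E'}=0$. The crucial point—and the reason to cut symmetrically rather than truncate Fourier coefficients one at a time—is that the error is controlled \emph{independently of the number of terms of} $b_0$: from $c'-\psi c'\psi=\chi c'+\psi c'\chi$ and $\chi=\chi\varphi_E$ (as $\varphi_E\equiv1$ on $\mathrm{supp}\,\chi$) one has $\norm{\chi c'}\le\norm{\varphi_Ec'}<\delta$ and $\norm{c'\chi}\le\norm{c'\varphi_E}<\delta$, so $\norm{c'-\psi c'\psi}<2\delta$ and hence $\norm{c'-b}\le\norm{c'-\psi c'\psi}+\norm{\psi(c'-b_0)\psi}<3\delta$.

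Renaming $E'$ as $E$ and taking $\delta$ (together with the zero-divisor approximation) small enough that $\norm{u_1au_2-b}\le\norm{u_1(a-a_0)u_2}+\norm{u_1a_0u_2-b}<\eps$, where $u_1=u_\gamma u$ and $u_2=v^*$, then completes the proof. I expect the genuine difficulty to be confined to the second step (producing a single open set from the two one-sided conditions) and to the independence-of-support feature of the third step; the remaining estimates are routine.
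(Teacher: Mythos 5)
Your proposal is correct and follows essentially the same route as the paper: reduce to a two-sided zero divisor via Proposition~3.2 of \cite{RorUHF}, apply Lemma~\ref{pre-alg-red} on each side, use minimality to translate the two open sets onto a common one, and then perform the two-sided cut $(1-\chi)b_0(1-\chi)$ of a $\mathrm{C}_{\mathrm c}(\Gamma,\mathrm{C}(X))$-approximant, with the error controlled by $\norm{\varphi_E c'}$ and $\norm{c'\varphi_E}$ independently of the support of $b_0$. The only differences from the paper's argument are cosmetic (the paper cuts directly with $1-\varphi_{E'}$ and takes $E\subseteq\varphi_{E'}^{-1}(1)$, and your displayed identity $\norm{(\varphi_{E_1}\circ\gamma)c'}=\norm{\varphi_{E_1}c'}$ should read $\norm{(\varphi_{E_1}\circ\gamma)c'}=\norm{\varphi_{E_1}c}$, a harmless typo).
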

\begin{proof}
Without loss of generality, one may assume that $\norm{a} = 1$. Let $\eps>0$ be given. Since $a$ is not invertible and $A$ is finite, by Proposition 3.2 of \cite{RorUHF}, there are $a'\in A$ and non-zero $b_1, b_2\in A^+$ such that 
$$\norm{a'} = 1,\quad \norm{a - a'} < \eps/5,\quad\mathrm{and}\quad b_1a' = 0 = a'b_2.$$ 

By Lemma \ref{pre-alg-red}, there are unitaries $u_1, u_2\in A$ and open sets $E', F' \subseteq X$ such that $$\norm{\varphi_{E'}u_1a'} < \eps/5 \quad\mathrm{and}\quad \norm{a'u_2\varphi_{F'}} < \eps/5.$$ Since $(X, \Gamma)$ is minimal (i.e., any orbit is dense), by passing to smaller open sets and changing the unitary $u_2$, one may assume that $E'=F'$.

Pick $a''\in \mathrm{C}_\mathrm{c}(\Gamma, \mathrm{C}(X))$ such that
$$\norm{u_1a'u_2 -a''} <\eps/5,$$ and note that
\begin{eqnarray*}
u_1au_2 & \approx_{\eps/5} & u_1a'u_2 \\
 & = & \varphi_{E'}u_1a'u_2\varphi_{E'} + (1-\varphi_{E'})u_1a'u_2\varphi_{E'} + \\
  &&  \varphi_{E'}u_1a'u_2(1-\varphi_{E'}) + (1-\varphi_{E'})u_1a'u_2(1-\varphi_{E'}) \\
  &\approx_{3\eps/5}& (1-\varphi_{E'})u_1a'u_2(1-\varphi_{E'}) \\
  &\approx_{\eps/5}& (1-\varphi_{E'})a''(1-\varphi_{E'}).
\end{eqnarray*} 
Pick an open set $E\subseteq \varphi^{-1}_{E'}(1)$ (so that $\varphi_E \varphi_{E'} = \varphi_E$), and define $$b= (1-\varphi_{E'})a''(1-\varphi_{E'}).$$
Then it is clear that 
$$\norm{u_1au_2 - b} <\eps\quad\mathrm{and}\quad \varphi_E b = b \varphi_E = 0,$$
as desired.
\end{proof}

%
%
%

\section{Stable rank of $\mathrm{C}(X)\rtimes\Gamma$}\label{D-2}

In this section, assuming $(X, \Gamma)$ has the (URP) and (COS), let us show that the element $b$ obtained in Proposition \ref{alg-red} is a $\mathcal D_0$-element (Proposition \ref{main-prop}). It follows that the C*-algebra $\mathrm{C}(X) \rtimes\Gamma$ has Property (D), and so has stable rank one by Theorem \ref{thm-ab-tsr1}.

Let $\Gamma$ be a discrete amenable group, and let $\Gamma_1$, $\Gamma_2$, ..., $\Gamma_T$ be finite subsets of $\Gamma$. Recall that $\Gamma$ is said to be tiled by $\Gamma_1$, $\Gamma_2$, ..., $\Gamma_T$ if there are group elements $$\gamma_{i, n},\quad n=1, 2, ...,\ i=1, ..., T,$$ such that 
$$\Gamma = \bigsqcup_{i=1}^T\bigsqcup_{n=1}^\infty \gamma_{i, n} \Gamma_i.$$ Note that if $\Gamma_i$ is (right) $(\mathcal F, \eps)$-invariant, then its (left) translation $\gamma_{i, n}\Gamma_i$ is also (right) $(\mathcal F, \eps)$-invariant.

\begin{lem}\label{div-lem-1}
Let $\Gamma$ be an infinite amenable group, and let $\Gamma_1, \Gamma_2, ..., \Gamma_T\subseteq \Gamma$ be finite sets which tile $\Gamma$. Let $\delta \in (0, 1]$, and let $n\in\mathbb N$. Then, there is a pair $(\mathcal F, \eps)$ such that if $F\subseteq\Gamma$ is $(\mathcal F, \eps)$-invariant, then there is $H\subseteq F$ such that $$\frac{\abs{H}}{\abs{F}} > 1-\delta,$$
and $H$ is tiled by $\Gamma_1, \Gamma_2, ..., \Gamma_T$ with multiplicities divisible by $n$.
\end{lem}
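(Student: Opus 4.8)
The plan is to build $H$ from the global tiling $\Gamma = \bigsqcup_{i,m}\gamma_{i,m}\Gamma_i$ by first collecting all tiles that lie entirely inside $F$, and then discarding a bounded number of them to correct the multiplicities modulo $n$. Write $L := \bigcup_{i=1}^T \Gamma_i^{-1}\Gamma_i$ and $S_0 := \sum_{i=1}^T|\Gamma_i|$; these are fixed finite data attached to the tiling. I would set $H_0$ to be the union of those tiles $\gamma_{i,m}\Gamma_i$ with $\gamma_{i,m}\Gamma_i\subseteq F$, which is automatically tiled by the shapes with some multiplicities $M_1,\dots,M_T$.

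The first key step is to show that $H_0$ already fills most of $F$ once $F$ is sufficiently invariant. The point is that the unique tile of the global tiling containing a given $\gamma\in F$ is of the form $\gamma s^{-1}\Gamma_i$ with $s\in\Gamma_i$, hence is contained in $\gamma L$. Consequently, if $\gamma\in\mathrm{int}_L(F)$, i.e.\ $\gamma L\subseteq F$, then the whole tile through $\gamma$ lies in $F$ and $\gamma\in H_0$. Therefore $F\setminus H_0\subseteq \partial_L F$, and since $\mathrm{int}_{\mathcal F}(F)\subseteq\mathrm{int}_L(F)$ whenever $L\subseteq\mathcal F$, the boundary estimate of Definition~\ref{defn-amenable} gives $|F\setminus H_0|\leq |\partial_L F|\leq|\partial_{\mathcal F}F|<|\mathcal F|\eps|F|<\tfrac{\delta}{2}|F|$ as soon as $F$ is $(\mathcal F,\eps)$-invariant with $\mathcal F\supseteq L$ and $|\mathcal F|\eps<\delta/2$.

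The second step corrects the multiplicities: for each shape write $M_i=nq_i+r_i$ with $0\le r_i<n$, delete $r_i$ of the tiles of shape $\Gamma_i$ from $H_0$ (possible since $r_i\le M_i$), and let $H$ be what remains, so that $H\subseteq H_0\subseteq F$. By construction $H$ is tiled with each multiplicity $nq_i$ divisible by $n$, and the number of deleted points is at most $(n-1)S_0$, a constant independent of $F$. To conclude $|H|/|F|>1-\delta$ I need this constant to be dominated by $\tfrac{\delta}{2}|F|$, which forces me to guarantee that $F$ is large in absolute terms. I would arrange this through the same invariance hypothesis: choosing $\mathcal F$ to be a finite set (available since $\Gamma$ is infinite) with $\mathcal F\supseteq L$ and $|\mathcal F|\geq 2(n-1)S_0/\delta$, and $\eps$ small enough that $|\mathcal F|\eps<\delta/2$, the estimate $|F\setminus\mathrm{int}_{\mathcal F}(F)|<|\mathcal F|\eps|F|<|F|$ makes $\mathrm{int}_{\mathcal F}(F)$ nonempty, so any $\gamma_0$ there gives $|F|\ge|\gamma_0\mathcal F|=|\mathcal F|\ge 2(n-1)S_0/\delta$. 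Then $(n-1)S_0\le\tfrac{\delta}{2}|F|$ and hence $|H|>(1-\tfrac\delta2)|F|-\tfrac\delta2|F|=(1-\delta)|F|$.

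The main obstacle, and the reason for care in choosing $(\mathcal F,\eps)$, is that a single invariance condition has to do two jobs at once: make the $L$-boundary of $F$ a small proportion of $F$ (so the interior tiles fill $F$), and simultaneously force $|F|$ to be large enough that the bounded divisibility correction $(n-1)S_0$ is negligible. Both are achieved by taking $\mathcal F$ large and containing $L$, with $\eps$ correspondingly small; the remaining verifications are the routine boundary/F{\o}lner estimates already recorded in Definition~\ref{defn-amenable}.
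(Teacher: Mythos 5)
Your proof is correct and follows essentially the same route as the paper's: collect the tiles of the global tiling that lie entirely in $F$ (a set missing only a boundary of $F$ relative to a fixed finite set built from the shapes), then discard at most $n-1$ tiles per shape to fix the multiplicities, the loss being a constant that is negligible because the invariance hypothesis forces $\abs{F}$ to be large. Your two refinements --- using $L=\bigcup_i\Gamma_i^{-1}\Gamma_i$ so that the whole tile through any point of $\mathrm{int}_L(F)$ lies in $F$, and deriving $\abs{F}\ge\abs{\mathcal F}$ from the nonemptiness of $\mathrm{int}_{\mathcal F}(F)$ rather than simply postulating that sufficiently invariant sets are large --- only make the argument tighter than the paper's version.
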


\begin{proof}
Set $\Gamma_1\cup\cdots\cup\Gamma_T = K$, and choose $\delta'>0$ such that
$$(1-\delta')(1-\frac{\delta'}{2}) > 1- \delta.$$
Choose $(\mathcal F, \eps)$ sufficiently large (with respect to the natural order relation), using amenablity, that if $F$ is $(\mathcal F, \eps)$-invariant, then
\begin{equation}\label{large-interior}
\frac{\abs{\mathrm{int}_K(F)}}{\abs{F}} > 1-\frac{\delta'}{2}.
\end{equation} 
Since $\Gamma$ is infinite, one may assume that $(\mathcal F, \eps)$ large enough  that if $F$ is $(\mathcal F, \eps)$-invariant, then 
\begin{equation}\label{very-far-F}
\abs{F} > \frac{2n(\abs{\Gamma_1} + \cdots + \abs{\Gamma_T})}{(2-\delta')\delta'}.
\end{equation} 
Then this $(\F, \eps)$ satisfies the requirement of the lemma. 

Indeed, let $F$ be an $(\mathcal F, \eps)$-invariant set. Since $\Gamma_1, ..., \Gamma_T$ tile $\Gamma$, by \eqref{large-interior}, there is a set $F'\subseteq F$ such that $F'$ can be tiled by  $\Gamma_1, ..., \Gamma_T$ (in fact, $F'$ may be chosen as the union of the tiles which intersect with $\mathrm{int}_KF$) and 
\begin{equation}\label{large-F'}
\frac{\abs{F'}}{\abs{F}} > 1-\frac{\delta'}{2}.
\end{equation}

Set $$(\bigsqcup_{i=1}^{m_1}\gamma_{1, i}\Gamma_1) \sqcup \cdots\sqcup (\bigsqcup_{i=1}^{m_T}\gamma_{T, i}\Gamma_T) = F',$$ where $\gamma_{i, j} \in\Gamma$ and $m_i$, $i=1, 2, ..., T$, are non-negative integers. Note that $$m_1\abs{\Gamma_1} + \cdots + m_T\abs{\Gamma_T} = \abs{F'}.$$

For each $m_i$, $i=1, 2, ..., T$, consider the remainder  $r_i$ when $m_i$ is divided by $n$. Then, set
$$(\bigsqcup_{i=1}^{m_1-r_1}\gamma_{1, m_1}\Gamma_1) \sqcup \cdots\sqcup (\bigsqcup_{i=1}^{m_T-r_T}\gamma_{T, i}\Gamma_T) = H.$$
It is clear that $H$ is tiled by $\Gamma_1, \Gamma_2, ..., \Gamma_T$ with multiplicities divisible by $n$. Moreover, by \eqref{large-F'} and \eqref{very-far-F},
\begin{eqnarray*}
1-\frac{\abs{H}}{\abs{F'}} & = & \frac{r_1\abs{\Gamma_1} + \cdots + r_T\abs{\Gamma_T}}{\abs{F'}} < \frac{n\abs{\Gamma_1} + \cdots + n\abs{\Gamma_T}}{\abs{F'}}\\ &< &  \frac{2n(\abs{\Gamma_1} + \cdots + \abs{\Gamma_T})}{(2-\delta')}\frac{1}{\abs{F}}< \delta',
\end{eqnarray*}
and hence, by \eqref{large-F'} again, 
$$\frac{\abs{H}}{\abs{F}} > (1-\delta')(1-\frac{\delta'}{2})> 1-\delta.$$
as desired.
\end{proof}

\begin{lem}\label{div-lem-2}
Let $\Gamma$ be an infinite amenable group, and let $\Gamma_1, \Gamma_2, ..., \Gamma_T\subseteq \Gamma$ be finite sets which tile $\Gamma$. Let $\delta\in (0, 1]$, let $n\in\mathbb N$, and let $K\subseteq \Gamma$ be a finite set. Then, there exists $(\mathcal F, \eps)$ such that if
$$F_1, F_2, ..., F_n$$
are mutually disjoint $(\mathcal F, \eps)$-invariant sets and $$\abs{F_1} = \abs{F_2} = \cdots = \abs{F_n},$$
then there are
$ H_1\subseteq F_1, ..., H_n\subseteq F_n$
such that
$$H_i K \subseteq F_i,\quad  
i=1, 2, ..., n,$$
each $H_i$ is tiled by $\Gamma_1, \Gamma_2, ..., \Gamma_T$ with multiplicities divisible by $n$,
$$\abs{H_1} = \abs{H_2} = \cdots = \abs{H_n},$$ and
 $$\frac{\abs{H_i}}{\abs{F_i}} > 1-\delta,\quad i=1, 2, ..., n.$$
\end{lem}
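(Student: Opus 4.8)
The plan is to build directly on Lemma \ref{div-lem-1}, applying its tiling mechanism to each $F_i$ separately and then reconciling the resulting tilings so that they acquire \emph{exactly} equal cardinalities. Two preliminary reductions make this manageable and cost nothing. First, since the conclusion $\abs{H_i}/\abs{F_i} > 1-\delta$ only weakens as $\delta$ grows, I may assume $\delta$ is as small as convenient; in particular I take $\delta < \delta_0$ for a threshold $\delta_0$ depending only on $n$, $T$, and $\max_j\abs{\Gamma_j}$. Second, exactly as in the cardinality bound used for Lemma \ref{div-lem-1} (cf.\ \eqref{very-far-F}), the hypothesis that $\Gamma$ is infinite lets me fold a lower bound $\abs{F_i} = N > N_0$ into the choice of $(\mathcal F, \eps)$, with $N_0$ a constant fixed at the end.

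Next I dispose of the interior requirement $H_iK \subseteq F_i$, which is just the condition $H_i \subseteq \mathrm{int}_K(F_i)$. Set $K' := \bigcup_{j=1}^T (\Gamma_j^{-1}\Gamma_j)(K\cup\{e\})$ and choose $\mathcal F \supseteq K'$ with $\eps$ small. As in Lemma \ref{div-lem-1}, for each $i$ I let $H_i^{0}$ be the union of those tiles of the fixed tiling of $\Gamma$ that meet $\mathrm{int}_{K'}(F_i)$; the choice of $K'$ forces each such tile to lie inside $\mathrm{int}_K(F_i)$, and the F\o lner property gives $\abs{H_i^{0}} > (1-\delta')N$ for a small $\delta'$ kept much smaller than $\delta$. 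Writing $m_{i,j}$ for the number of shape-$\Gamma_j$ tiles in $H_i^{0}$, I discard a bounded remainder of tiles of each shape (the remainder trick of Lemma \ref{div-lem-1}) so that every $m_{i,j}$ becomes divisible by $n$, while $a_i := \abs{H_i^{0}} = \sum_j m_{i,j}\abs{\Gamma_j}$ still lies in $((1-\delta')N,\, N]$.

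It remains to trim the $H_i^{0}$ to a common cardinality using only removals of whole tiles in bundles of $n$ of a fixed shape (this preserves divisibility by $n$ and keeps each $H_i$ a disjoint union of left-translates of the shapes, hence still tiled in the required sense). Each $a_i$ is a multiple of $n\hat G_i$, where $\hat G_i$ is the gcd of the sizes of the shapes occurring \emph{abundantly} in $H_i^{0}$ (say with $m_{i,j}\ge nB_0$ for a fixed $B_0\ge\max_j\abs{\Gamma_j}$); the rare shapes contribute only an $O(1)$ amount, so restricting to the abundant tiles costs a bounded amount of cardinality. I then take the common target $M$ to be the largest multiple of $\hat D := n\,\mathrm{lcm}_i \hat G_i$ (a divisor of the fixed number $n\,\mathrm{lcm}_j\abs{\Gamma_j}$) lying below $\min_i a_i - C$, where $C$ absorbs the Frobenius numbers of the abundant shape-size sets. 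For each $i$ the deficit $a_i - M$ is a multiple of $n\hat G_i$ exceeding the relevant Frobenius number, so the Chicken McNugget theorem realizes it as a non-negative combination of the abundant bundle sizes; routing the bulk through the dominant shape keeps every coefficient within the available multiplicity (this is where $\delta<\delta_0$ and $N>N_0$ are used). This produces $H_i \subseteq H_i^{0}\subseteq \mathrm{int}_K(F_i)$ with $\abs{H_1} = \cdots = \abs{H_n} = M$, all multiplicities divisible by $n$, and $M > (1-\delta)N$.

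The main obstacle is precisely this last equalization: the tilings induced on different $F_i$ can have genuinely different shape-profiles — a non-uniform global tiling may favor one shape in one region and another elsewhere — so one cannot intersect the tilings or take minimal multiplicities shape by shape (the minimum can collapse to almost nothing). The remedy is to match only the \emph{totals} rather than the individual multiplicities, converting the task into the congruence and representability statement handled by the Frobenius bound above; the two reductions (infinitely many group elements forcing $N$ large, and the freedom to shrink $\delta$) are exactly what make $\hat D$, $C$, and $\delta'N$ negligible against $(1-\delta)N$. The one point I would verify carefully is that the Frobenius correction never demands more tiles of a scarce shape than are present, which is the whole reason for isolating the abundant shapes before matching.
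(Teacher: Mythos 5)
Your proof is correct, and its scaffolding coincides with the paper's: pass to $K$-interiors (enlarged so that every whole tile meeting the shrunken interior stays inside $\mathrm{int}_K(F_i)$), apply Lemma \ref{div-lem-1} to each $F_i$ separately, and then equalize cardinalities by discarding whole tiles in bundles of $n$. Where you diverge is in the equalization step, which is the real content of the lemma. The paper sidesteps your Frobenius/numerical-semigroup machinery entirely by invoking Lemma \ref{div-lem-1} with divisibility parameter $n\abs{\Gamma_1}\abs{\Gamma_2}\cdots\abs{\Gamma_T}$ rather than $n$: then every total $\abs{F_i'}$ is a multiple of $n\prod_j\abs{\Gamma_j}$, the common target is simply $D=\min_i\abs{F_i'}$, and each deficit $\abs{F_i'}-D=d_i\,n\prod_j\abs{\Gamma_j}$ is automatically a multiple of $n\abs{\Gamma_t}$ for any single surviving shape $\Gamma_t$ (since $\abs{\Gamma_t}$ divides the product), so it is absorbed by removing bundles of one abundant shape with no representability argument at all. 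Your version --- divisibility by $n$ only, a target $M$ divisible by $n\,\mathrm{lcm}_i\hat G_i$, and a Chicken McNugget representation of the deficits --- is valid but heavier, and buys nothing here, since the shapes are fixed in advance and one may demand as much divisibility as one likes from Lemma \ref{div-lem-1}. Both arguments share the one genuinely delicate point, which you correctly isolate: the tiles removed to absorb the deficit must actually be present, i.e., the required coefficients must not exceed the available multiplicities. The paper handles this by zeroing out all shapes of multiplicity below $\frac{\delta}{1+T}\abs{F_i}$ (at $O(\delta)$ cost) and checking that the surviving multiplicities dominate the deficit; your ``route the bulk through the dominant shape and keep the Frobenius corrections $O(1)$'' does the same job and is, if anything, the more robust piece of bookkeeping. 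In short: correct, same skeleton, with a more elaborate but workable substitute for the paper's divisibility trick at the key step.
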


\begin{proof}
Without loss of generality, one may assume that $\delta$ is sufficiently small such that $$\frac{\delta}{1-\delta} < \frac{1}{T}.$$
Applying Lemma \ref{div-lem-1} to $$\frac{\delta}{2}\quad  \mathrm{and}\quad n\abs{\Gamma_1}\abs{\Gamma_2}\cdots\abs{\Gamma_T},$$ one obtains $(\mathcal F', \eps')$. Choose $(\mathcal F, \eps)$ such that if $F$ is $(\mathcal F, \eps)$-invariant, then $\mathrm{int}_KF$ is $(\mathcal F', \eps')$-invariant, and
\begin{equation}\label{off-shell-1}
\frac{\abs{\mathrm{int}_KF}}{\abs{F}} > 1-\frac{\delta}{2}.
\end{equation}
Then $(\mathcal F, \eps)$ satisfies the requirement of the lemma.

Indeed, let $F_1, F_2, ..., F_n$ be mutually disjoint $(\mathcal F, \eps)$-invariant sets with $$\abs{F_1} = \abs{F_2} = \cdots = \abs{F_n}.$$ Consider the sets $$\mathrm{int}_KF_1,\ \mathrm{int}_KF_2,\ ..., \mathrm{int}_KF_n.$$ Then each of them is $(\mathcal F', \eps')$-invariant. 
Also note that
$$(\mathrm{int}_KF_i) K \subseteq F_i,\quad (\mathrm{int}_KF_i) K \cap (\mathrm{int}_KF_j) = \varnothing,\quad  i, j=1, 2, ..., n,\  i\neq j,$$
Hence, by Lemma \ref{div-lem-1}, there are
$$F_1' \subseteq \mathrm{int}_KF_1,\  F_2'\subseteq \mathrm{int}_KF_2, ...,\  F'_n\subseteq \mathrm{int}_KF_n,$$
such that
\begin{equation}\label{off-shell-2}
\frac{\abs{F'_i}}{\abs{\mathrm{int}_KF_i}} > 1-\frac{\delta}{2},\quad i=1, 2, ..., n,
\end{equation}
and 
\begin{equation}\label{inn-decomp}
F_i' =(\bigsqcup_{j=1}^{m^{(i)}_1}\gamma^{(i)}_{1, j} \Gamma_1) \sqcup \cdots \sqcup (\bigsqcup_{j=1}^{m^{(i)}_T}\gamma^{(i)}_{T, j} \Gamma_T),\quad i=1, 2, ..., n,
\end{equation}
and each $m^{(i)}_t$, $i=1, 2, ..., n$, $t=1, 2, ..., T$, is divisible by $n\abs{\Gamma_1}\abs{\Gamma_2}\cdots\abs{\Gamma_T}$. 

It follows from \eqref{off-shell-1} and \eqref{off-shell-2} that for each $i=1, 2, ..., n$, 
\begin{eqnarray}\label{small-2nd-pert}
\abs{F_i} - \abs{F_i'} & = & (\abs{F_i} - \abs{\mathrm{int}_K(F_i)})  + (\abs{\mathrm{int}_K(F_i)} - \abs{F_i'}) \\
&<& \frac{\delta}{2}\abs{F_i} + \frac{\delta}{2}\abs{\mathrm{int}_KF_i} \nonumber \\
&\leq & \frac{\delta}{2}\abs{F_i} + \frac{\delta}{2}\abs{F_i} \nonumber \\
& = &  {\delta}\abs{F_i}. \nonumber
\end{eqnarray}

Set $$\min\{\abs{F_1'}, \abs{F_2'}, ..., \abs{F_n'}\} = D.$$ Since $\abs{F_1'}, \abs{F_2'}, ..., \abs{F_n'}$ are divided by $n\abs{\Gamma_1}\abs{\Gamma_2}\cdots\abs{\Gamma_T}$, there are non-negative integers $d_i$, $i=1, ..., n$, such that $$ \abs{F'_i} - D   = d_i\abs{\Gamma_1}\abs{\Gamma_2}\cdots\abs{\Gamma_T}n, \quad i=1, 2, ..., n.$$
By \eqref{small-2nd-pert} (note that $\abs{F_1} = \cdots =\abs{F_n}$),
\begin{equation}\label{final-off-shell}
\frac{D}{\abs{F_i}} > \frac{\abs{F_i} - \delta \abs{F_i}}{\abs{F_i}} =  1- \delta,\quad i=1, 2, ..., n,
\end{equation}
and so
$$ \abs{F'_i} - D \leq \abs{F_i} - D \leq \delta\abs{F_i},\quad i=1, 2, ..., n.$$

For each $i=1, 2, ..., n$, consider the set $$\{t_1, t_2, ..., t_S\} = \set{ t=1, 2, ..., T: m_{t}^{(i)} \neq 0}.$$ Then, there are integers $$0\leq c^{(i)}_{t_1}, ..., c^{(i)}_{t_S} \leq d_i\abs{\Gamma_1}\abs{\Gamma_2}\cdots\abs{\Gamma_T} = \frac{\abs{F_i'} - D}{n} \leq \frac{\delta\abs{F_i}}{n}\leq \frac{\delta\abs{F'_i}}{(1-\delta)n} $$
such that 
$$d_i\abs{\Gamma_1}\abs{\Gamma_2}\cdots\abs{\Gamma_T} = c^{(i)}_{t_1} \abs{\Gamma_{t_1}} + \cdots + c^{(i)}_{t_S} \abs{\Gamma_{t_S}}.$$ 
and
\begin{equation}\label{small-c}
c_{t_j}^{(i)} n \leq  m_{t_j}^{(i)},\quad j=1, ..., S.
\end{equation}
(Indeen, since 
$$m_{t_1}^{(i)} \abs{\Gamma_{t_1}} + \cdots m_{t_S}^{(i)} \abs{\Gamma_{t_S}} = \abs{F_i'}$$ and $S \leq T$, for at least one of $t_1, ..., t_S$, say, $t_1$, one has $$m_{t_1}^{(i)}\abs{\Gamma_{t_1}} \geq \frac{\abs{F'_i}}{T}. $$
Then, since 
$$d_i\abs{\Gamma_1}\abs{\Gamma_2}\cdots\abs{\Gamma_T} n < \frac{\delta\abs{F'_i}}{(1-\delta)} <\frac{\abs{F'_i}}{T} \leq m_{t_1}^{(i)} \abs{\Gamma_{t_1}},$$ the numbers 
$$c_{t_1}^{(i)} := d_i\abs{\Gamma_1}\abs{\Gamma_2}\cdots\abs{\Gamma_T}/\abs{\Gamma_{t_1}}\quad \mathrm{and} \quad c^{(i)}_{t_s} := 0,\quad s=2, ..., S,$$ have the desired property.)
For each $t\notin\set{t_1, ..., t_S}$, set $c_t^{(i)} = 0$. Then, one still has 
$$d_i\abs{\Gamma_1}\abs{\Gamma_2}\cdots\abs{\Gamma_T} = c^{(i)}_1 \abs{\Gamma_1} + \cdots + c^{(i)}_T \abs{\Gamma_T}$$
and
\begin{equation}\label{small-c}
c_{t}^{(i)} n \leq  m_{t}^{(i)},\quad t=1, 2, ..., T.
\end{equation}

Put
$$H_i =(\bigsqcup_{j=1}^{m^{(i)}_1-c^{(i)}_1n}\gamma^{(i)}_{1, j} \Gamma_1) \sqcup \cdots \sqcup (\bigsqcup_{j=1}^{m^{(i)}_T-c^{(i)}_Tn}\gamma^{(i)}_{T, j} \Gamma_T),\quad i=1, 2, ..., n.$$ 
(Note that, by \eqref{small-c}, $m_t^{(i)} - c_t^{(i)}n \geq 0$, $t=1, ..., T$.)
Since each $m_t^{(i)}$ is divisible by $n$, it is clear that each $H_i$ is tiled by $\Gamma_1, ..., \Gamma_T$ with multiplicities divisible by $n$.
Since
$$H_1 \subseteq \mathrm{int}_KF_1,\  H_2\subseteq \mathrm{int}_KF_2, ...,\  H_n\subseteq \mathrm{int}_KF_n,$$
one has
$$H_1K \subseteq F_1,\  H_2K\subseteq F_2, ...,\  H_nK\subseteq F_n.$$
 Also note that
$$\abs{H_1} = \abs{H_2} = \cdots = \abs{H_n} = D,$$
so that, by \eqref{final-off-shell},
$$\frac{\abs{H_i}}{\abs{F_i}} > 1-\delta,\quad i=1, 2, ..., n,$$
as desired.
\end{proof}

\begin{lem}\label{division-tower}
Let $\Gamma$ be an infinite amenable group, and let $(X, \Gamma)$ be a minimal dynamical system with the (URP). Let $\lambda>0$ be arbitrary, and let $O_{0, 1}, ..., O_{0, M},  O_{1, 1}, ..., O_{1, M} \subseteq X$ be mutually disjoint non-empty open sets and $$\{\kappa_{0,1}(=e), \kappa_{0, 2}, ..., \kappa_{0, M}, \kappa_{1,1}(=e), \kappa_{1, 2}, ..., \kappa_{1, M}\}\subseteq\Gamma$$ a finite family such that $$O_{i, m}=O_{i, 1}\kappa_{i, m},\quad i=0, 1, \ m=1, ..., M.$$ Put
$$\delta:=\min\{\mu(O_{i, m}): i=0, 1,\  m=1, ..., M,\  \mu\in\mathcal M_1(X, \Gamma)\},$$
and let $K\subseteq\Gamma$ be a symmetric finite set. (Since $(X, \Gamma)$ is minimal, $\delta \neq 0$.) Let $N>0$ be arbitrary.

Then, there are $n\in\mathbb N$ with $n>N$ and $(\mathcal F, \eps)$ 
such that if $(B, F)$ is a tower of $(X, \Gamma)$ with $F$ to be $(\mathcal F, \eps)$-invariant, then there is an order zero c.p.c.~map $$\phi: \mathrm{M}_{n^2}(\Comp) \to A,$$
where $A = \mathrm{C}(X)\rtimes\Gamma$,
such that if $$h:=\phi(1)\quad\textrm{and}\quad e_{i} := \phi(e_{i, i}),\quad i=1, 2, ..., n^2,$$
and 
$$b_k:= e_{n(k-1)+1} + \cdots + e_{n(k-1)+n},\quad k=1, 2, ..., n,$$
then 
$$e_i \in\mathrm{C}(X)\subseteq A$$
and
if $$E_i := e_i^{-1}((0, 1]),\quad i=1, 2, ..., n^2,$$
then
\begin{enumerate}

\item\label{property-1-lem} $$\bigsqcup_{i=1}^{n^2} E_i \subseteq \bigsqcup_{\gamma\in F} B\gamma,$$ and
         $$ \mu(\bigsqcup_{\gamma\in F} B\gamma \setminus \bigsqcup_{i=1}^{n^2} E_i) < \lambda \frac{\delta}{16n}\mu(\bigsqcup_{\gamma\in F} B\gamma),\quad \mu\in\mathcal M_1(X, \Gamma),$$

\item\label{property-2-lem} for each $k=1, 2, ..., n$, there are mutually disjoint open sets $O^{k}_{0, 1}, ..., O^{k}_{0, M}$ and $O^{k}_{1, 1}, ..., O^{k}_{1, M}$ such that

         \begin{enumerate}
         
         \item $O^k_{0, m} \subseteq O_{0, m}\cap \bigsqcup_{j=n(k-1)+4}^{nk} E_j$ and $O^k_{1, m} \subseteq O_{1, m}\cap \bigsqcup_{j=n(k-1)+1}^{nk} E_j$, $m=1, 2, ..., M$,
         
         \item $O^{k}_{i, m} = O^{k}_{i, 1}\kappa_{i, m}$, $i=0, 1$, $m=1, 2, ..., M$,
         
         \item $\mu(O^{k}_{0, 1}), \mu(O^{k}_{1, 1}) > \frac{\delta}{8n}\mu(\bigsqcup_{i=1}^{n^2}E_i)$, $\mu\in\mathcal M_1(X, \Gamma)$,
         
         \end{enumerate}

\item\label{property-4-lem} $b_{k_1} \perp u_\gamma b_{k_2} u^*_\gamma,\quad \gamma\in K,\ k_1\neq k_2,\ 1\leq k_1, k_2\leq n,$ where $u_\gamma\in A$ is the canonical unitary of $\gamma$. 

\end{enumerate}
\end{lem}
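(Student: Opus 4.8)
The plan is to reduce the statement to a purely combinatorial partition of the shape $F$ and then feed the result into Lemma \ref{existence-0-map}. First I would fix, once and for all, a tiling of $\Gamma$ by shapes $\Gamma_1,\dots,\Gamma_T$ that are $(\mathcal F_0,\eps_0)$-invariant for a suitable $(\mathcal F_0,\eps_0)$ (available since $\Gamma$ is amenable, by \cite{DHZ-tiling}), enlarge $K$ to a symmetric finite set $K'\supseteq K\cup\{\kappa_{i,m},\kappa_{i,m}^{-1}\}$, and choose $n$ large (pinned down at the end, with $n>3$). The invariance data $(\mathcal F,\eps)$ will be produced by Lemma \ref{div-lem-2}, applied with constraint set $K'$ and divisibility $n$. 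Given a tower $(B,F)$ with $F$ being $(\mathcal F,\eps)$-invariant, I would break most of $F$ into $n$ mutually disjoint, equal-cardinality, approximately invariant super-blocks $F_1,\dots,F_n$ (unions of whole tiles, trimmed to equal size), then apply Lemma \ref{div-lem-2} to obtain $H_k\subseteq F_k$ with $H_kK'\subseteq F_k$, $\abs{H_1}=\dots=\abs{H_n}$, each $H_k$ tiled with multiplicities divisible by $n$ and filling all but a $\delta'$-fraction of $F_k$. The divisibility then lets me split each $H_k$ into $n$ sub-pieces $\Gamma_{0,n(k-1)+1},\dots,\Gamma_{0,nk}$ of equal size, so that the $\Gamma_{0,i}$, $i=1,\dots,n^2$, are mutually disjoint subsets of $F$ of a common cardinality. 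Taking $e=\varphi_B$ as base function and invoking Lemma \ref{existence-0-map} produces the order zero c.p.c.~map $\phi\colon\mathrm M_{n^2}(\Comp)\to A$ with $e_i=\sum_{\gamma\in\Gamma_{0,i}}u_\gamma^* e u_\gamma\in\mathrm C(X)$ and $E_i=\bigsqcup_{\gamma\in\Gamma_{0,i}}B\gamma$.

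With this set-up, Properties \eqref{property-1-lem} and \eqref{property-4-lem} should come out directly. For \eqref{property-1-lem}, the inclusion is immediate since $\bigsqcup_i\Gamma_{0,i}=\bigsqcup_kH_k\subseteq F$, and the measure estimate reduces to the cardinality bound $1-\abs{\bigsqcup_kH_k}/\abs{F}<\delta'$, using that $\mu(B\gamma)=\mu(B)$ for every invariant $\mu$; I would simply require $\delta'<\lambda\delta/(16n)$ at the outset. For \eqref{property-4-lem}, since the open support of $b_k$ is $\bigsqcup_{\gamma\in H_k}B\gamma$ and $u_\gamma b_{k_2}u_\gamma^*=b_{k_2}\circ\gamma$ is supported on $\bigsqcup_{\gamma''\in H_{k_2}}B\gamma''\gamma^{-1}$, the orthogonality for $\gamma\in K$, $k_1\neq k_2$, follows from $H_{k_1}\cap H_{k_2}K=\varnothing$, which in turn is guaranteed by $H_{k_2}K\subseteq H_{k_2}K'\subseteq F_{k_2}$ together with the disjointness of $F_{k_1}$ and $F_{k_2}$.

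The substance is Property \eqref{property-2-lem}. Here I would use $H_kK'\subseteq F_k$ to guarantee that the $\kappa_{i,m}$-translates of levels in $H_k$ remain valid tower levels inside the block. Concretely, I would select $O^k_{i,1}$ as the restriction of $O_{i,1}$ to the $\{\kappa_{i,m}\}$-interior of the appropriate sub-collection of levels (levels coming from $\Gamma_{0,n(k-1)+4},\dots,\Gamma_{0,nk}$ for $i=0$, and from $\Gamma_{0,n(k-1)+1},\dots,\Gamma_{0,nk}$ for $i=1$, the harmless offset by three sub-pieces being absorbed by taking $n$ large), and then put $O^k_{i,m}:=O^k_{i,1}\kappa_{i,m}$. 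The containments $O^k_{i,m}\subseteq O_{i,m}\cap\bigsqcup_jE_j$ and the equivariance $O^k_{i,m}=O^k_{i,1}\kappa_{i,m}$ are then built into the construction, and the mutual disjointness of the $O^k_{i,m}$ is inherited from that of the $O_{i,m}$.

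The hard part will be the uniform measure lower bound in \eqref{property-2-lem}(c): I must guarantee $\mu(O^k_{i,1})>\frac{\delta}{8n}\mu(\bigsqcup_iE_i)$ for every block $k$ and \emph{simultaneously} for every invariant measure $\mu$. Since $O^k_{i,1}$ is forced to be a subset of the fixed set $O_{i,1}$, this amounts to showing that each block captures a fair share of the $O_{i,1}$-mass of the tower, uniformly over the whole simplex $\mathcal M_1(X,\Gamma)$. Minimality guarantees every invariant measure has full support, so no block can be exactly starved; but an adversarial $\mu$ can still make $\mu(O_{i,1}\cap\bigsqcup_{\gamma\in H_k}B\gamma)$ arbitrarily small for a single block while keeping $\mu(\bigsqcup_iE_i)$ close to $1$, so no naive assignment of levels will do. My plan is to distribute the tiles comprising $\bigsqcup_kH_k$ evenly among the $n$ blocks, using divisibility by $n$ to give each block the same tile content, and then to control the resulting per-block masses uniformly by a compactness argument: the functions $\mu\mapsto\mu(O_{i,1}\cap\bigsqcup_{\gamma\in H_k}B\gamma)$ are weak-$*$ continuous and their sum over $k$ is at least $\delta-\eps$, so, after passing to finitely many test measures covering the compact simplex and exploiting the strictness of the inequality together with $n$ large, I expect every block can be arranged to exceed the required threshold. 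Making this balancing hold for all $\mu$ at once, rather than one measure at a time, is the crux of the whole lemma.
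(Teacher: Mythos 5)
Your combinatorial set-up (tiling by \cite{DHZ-tiling}, Lemma \ref{div-lem-2} with an enlarged symmetric constraint set, trimming to equal cardinalities divisible by $n$, and then Lemma \ref{existence-0-map} applied to $e=\varphi_B$) matches the paper's construction, and your treatment of Properties \eqref{property-1-lem} and \eqref{property-4-lem} is correct. The gap is exactly where you locate it, in Property \eqref{property-2-lem}(c), and your proposed fix does not close it. The balancing-plus-compactness argument only gives you, for each fixed invariant measure, that the \emph{average} block captures mass about $\delta/n$; to promote this to \emph{every} block for \emph{every} $\mu$ simultaneously you would need to solve a simultaneous fair-division problem over the whole simplex with a single combinatorial assignment, and nothing in your sketch produces such an assignment. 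Passing to finitely many test measures does not help, because different test measures can demand incompatible distributions of the tiles among the blocks.

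The idea you are missing is that no balancing is needed at all: the paper makes each individual sub-piece $\Gamma_{k,j}$ (not just the super-block $H_k$) a disjoint union of translates of shapes that are $(\mathcal F',\eps')$-invariant, where $(\mathcal F',\eps')$ is chosen in advance so that for \emph{every} point $x\in X$ and every $(\mathcal F',\eps')$-invariant set $\Gamma_0$ one has $\tfrac{1}{|\Gamma_0|}\,|\{\gamma\in\Gamma_0: x\gamma\in O_{i,m}\}|>\delta/2$ (a uniform F{\o}lner-density estimate for open sets, valid for minimal actions by a compactness/empirical-measure argument). This pointwise bound forces $\mathrm{rank}\bigl(b_k\varphi_{O_{i,m}}(x)\bigr)\geq \tfrac{\delta}{2}\,n\,|\Gamma_{k,1}|$ at every $x\in B$, and integrating over $B$ gives $\mu(O_{i,m}\cap\bigsqcup_j E_j)>\tfrac{\delta}{4n}\mu(\bigsqcup_i E_i)$ for all $\mu\in\mathcal M_1(X,\Gamma)$ at once; the boundary loss from passing to the $K_0^M$-interior (your $\{\kappa_{i,m}\}$-interior) is then absorbed by the $\partial_{K_0^M}$-smallness built into the same invariance condition. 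In particular your remark that ``no naive assignment of levels will do'' is mistaken --- the naive assignment works precisely because each block is itself sufficiently invariant. As written, your proof cannot be completed without introducing this pointwise density estimate (or an equivalent device), so the proposal has a genuine gap at the crux of the lemma.
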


\begin{proof}

Choose a natural number $n>N$ such that 
\begin{equation}\label{large-n-0}
0 < \frac{1}{n-3} < \frac{\delta}{24},\quad \lambda\frac{\delta}{16n} < \frac{1}{2},\quad\mathrm{and}\quad \frac{3}{n} < \frac{\delta}{16}.
\end{equation}

Pick $(\mathcal F', \eps')$ such that if a finite set $\Gamma_0\subseteq \Gamma$ is $(\mathcal F', \eps')$-invariant, then
\begin{equation}\label{density-O}
\frac{1}{\abs{\Gamma_0}}\abs{\{\gamma\in\Gamma_0: x\gamma \in O_{i, m}\}} > \frac{\delta}{2},\quad x\in X,\ i=0, 1, \  m=0, 1, ..., M,
\end{equation}
and 
\begin{equation}\label{small-O-walk}
 \frac{\abs{\partial_{K_0^M}\Gamma_0}}{\abs{\Gamma_0}} < \frac{\delta}{16},
\end{equation}
where $$K_0:=\{\kappa_{0, 1}, \kappa_{0, 2}, ..., \kappa_{0, M}, \kappa_{1, 1}, \kappa_{1, 2}, ..., \kappa_{1, M}\}.$$

By Theorem 4.3 of \cite{DHZ-tiling}, there are  $(\mathcal F', \eps')$-invariant finite sets $$\Gamma_1,\ \Gamma_2,\ ...,\ \Gamma_T\subseteq\Gamma$$ which tile $\Gamma$. 
Applying Lemma \ref{div-lem-2} to $\lambda\delta/32n$, $n$, and $K$ with respect to the finite sets $\Gamma_1$,..., $\Gamma_T$, one obtains $(\mathcal F'', \eps'')$.

By Theorem 4.3 of \cite{DHZ-tiling} again, there are  $(\mathcal F'', \eps'')$-invariant finite sets $$\Gamma'_1,\ \Gamma'_2,\ ...,\ \Gamma'_{T'}\subseteq\Gamma$$ which tile $\Gamma$. 
Applying Lemma \ref{div-lem-1} to $\lambda\delta/32n$ and $n$ with respect to the finite sets $\Gamma'_1$, $\Gamma'_2$, ..., $\Gamma'_{T'}$, one obtains $(\mathcal F, \eps)$. Since $\Gamma$ is infinite, one may assume that $(\mathcal F, \eps)$ is sufficiently large that if $F$ is $(\mathcal F, \eps)$-invariant, then $\abs{F} > 2n^2$.

Then, $(\mathcal F, \eps)$ possesses the property of the lemma.

Indeed, let $(B, F)$ be a tower such that $F$ is $(\mathcal F, \eps)$-invariant. Then, by Lemma \ref{div-lem-1}, there is a finite set $R_1\subseteq F$ such that
\begin{equation}\label{small-R-1}
\frac{\abs{R_1}}{\abs{F}} < \lambda \frac{\delta}{32n}
\end{equation}
and $F\setminus R_1$ can be tiled by $\Gamma_1', ..., \Gamma_{T'}'$ with multiplicities divisible by $n$. Grouping the tilings appropriately, one has $$F\setminus{R_1} = \Gamma_1'' \sqcup \Gamma_2'' \sqcup \cdots\sqcup \Gamma_{n}'',$$ where $\Gamma_i''$, $i=1, ..., n$, are mutually disjoint $(\mathcal F'', \eps'')$-invariant sets and $$\abs{\Gamma_1''} = \abs{\Gamma_2''} = \cdots = \abs{\Gamma_n''}.$$

By Lemma \ref{div-lem-2} and the choice of $(\mathcal F'', \eps'')$, there are finite sets $\Gamma_i''' \subseteq \Gamma_i''$ such that 
$$\Gamma_i''' K \subseteq \Gamma_i'',\quad  i=1, 2, ..., n,$$
$$\frac{\abs{\Gamma'''_i}}{\abs{\Gamma_i''}} > 1 -\lambda\frac{\delta}{32n},\quad i=1, 2, ..., n,$$
$$\abs{\Gamma_1'''} = \abs{\Gamma_2'''} = \cdots = \abs{\Gamma_n'''},$$
and each $\Gamma_i'''$ is tiled by $\Gamma_1, ..., \Gamma_T$ with multiplicities divisible by $n$. Since $\Gamma_i''$, $i=1, ..., n$, are mutually disjoint, one has $$\Gamma_i'''K \cap \Gamma_j''' = \varnothing,\quad i, j = 1, 2, ..., n,\ i\neq j.$$

With $$R_2 = (F\setminus R_1)\setminus ( \Gamma_1''' \sqcup \Gamma_2''' \sqcup \cdots\sqcup \Gamma_{n}'''),$$
one has
$$
F\setminus(R_1\cup R_2) = \Gamma_1''' \sqcup \Gamma_2''' \sqcup \cdots\sqcup \Gamma_{n}''',
$$
and
\begin{equation}\label{small-R-2}
\frac{\abs{R_2}}{\abs{F}} \leq \lambda\frac{\delta}{32n}.
\end{equation}

Note that, by \eqref{large-n-0}, \eqref{small-R-1}, and \eqref{small-R-2}, one has
\begin{equation}\label{small-R-1-2}
2\abs{F\setminus(R_1\cup R_2)} > \abs{F}.
\end{equation}

Then, inside each $\Gamma_i'''$, since each such set is tiled by $\Gamma_1, ..., \Gamma_T$ (which are $(\mathcal F', \eps')$-invariant) with multiplicities divisible by $n$,  after regrouping, one has
$$\Gamma_i''' = \Gamma_{i, 1} \sqcup \cdots \sqcup \Gamma_{i, n},$$
where $\Gamma_{i, j}$ is $(\mathcal F', \eps')$-invariant with $$\abs{\Gamma_{i, 1}} = \abs{\Gamma_{i, 2}} = \cdots = \abs{\Gamma_{i, n}},\quad i=1, 2, ..., n.$$

In summary, one obtains the decomposition
$$F\setminus(R_1\cup R_2) = (\Gamma_{1, 1}\sqcup\cdots \sqcup \Gamma_{1, n}) \sqcup \cdots \sqcup (\Gamma_{n, 1}\sqcup\cdots \sqcup \Gamma_{n, n})$$
with the properties
\begin{enumerate}
\item \begin{equation}\label{equi-div}
\abs{\Gamma_{i_1, j_1}} = \abs{\Gamma_{i_2, j_2}},\quad 1\leq i_1, i_2, j_1, j_2\leq n,
\end{equation}
\item each $\Gamma_{i, j}$ is $(\mathcal F', \eps')$-invariant,
\item \begin{equation}\label{K-separated-level-1}
            (\Gamma_{i, 1}\sqcup\cdots \sqcup \Gamma_{i, n}) K \subseteq F,\quad i=1, 2, ..., n,\quad \textrm{and}
         \end{equation}   
          
\item if $i\neq j$, then 
         \begin{equation} \label{K-separated-level-2}
                    (\Gamma_{i, 1}\sqcup\cdots \sqcup \Gamma_{i, n})K \cap (\Gamma_{j, 1}\sqcup\cdots \sqcup \Gamma_{j, n}) = \varnothing. 
          \end{equation} 
\end{enumerate}

Set $\varphi_{B} = e$, and set $$ u^*_\gamma e u_\gamma = e_{\gamma},\quad \gamma\in F.$$
For each $1\leq i\leq n^2$, write $i=n(k-1)+j$, where $1\leq j\leq n$, and set
$$ \sum_{\gamma\in\Gamma_{k, j}} e_{\gamma} = e_i. $$
By \eqref{equi-div}, it follows from Lemma \ref{existence-0-map} that there is a order zero map
$$\phi: \mathrm{M}_{n^2}(\Comp) \to A$$
such that
$$\phi(e_{i, i}) = e_i,\quad i=1, 2, ..., n^2.$$

Recalling the notation $E_i:= e_{i}^{-1}((0, 1])$, note that, with $i=n(k-1)+j$ with $1\leq j\leq n$, one has $$E_i = \bigsqcup_{\gamma\in\Gamma_{k, j}}B\gamma.$$ It is clear that
\begin{equation}\label{smaller-E}
\bigsqcup_{i=1}^{n^2} E_i \subseteq \bigsqcup_{\gamma\in F}B\gamma.
\end{equation}
Hence by \eqref{small-R-1} and \eqref{small-R-2}, one has
$$ \mu(\bigsqcup_{\gamma\in F} B\gamma \setminus \bigsqcup_{k=1}^{n^2} E_k) = \mu(\bigsqcup_{\gamma\in R_1\cup R_2} B\gamma)   <\lambda \frac{\delta}{16n}\mu(\bigsqcup_{\gamma\in F} B\gamma),\quad \mu\in\mathcal M_1(X, \Gamma).$$ This proves Property (\ref{property-1-lem}).

Consider the sums $$b_k:=e_{n(k-1)+1} + \cdots + e_{nk},\quad k=1, ..., n.$$ Note that, with $$\Gamma_k:=\Gamma_{k, 1} \sqcup\cdots \sqcup \Gamma_{k, n},$$
one has 
$$b_k = \sum_{\gamma\in\Gamma_k} e_\gamma,$$
and hence, if $\Gamma_k\gamma \subseteq F$ for a group element $\gamma$, then 
$$u^*_\gamma b_k u_\gamma= \sum_{\gamma' \in\Gamma_k} u^*_\gamma e_{\gamma'} u_\gamma = \sum_{\gamma' \in\Gamma_k}  e_{\gamma'\gamma} = \sum_{\gamma'\in \Gamma_k \gamma}  e_{\gamma'}. $$

Thus, by \eqref{K-separated-level-1}, \eqref{K-separated-level-2} and the assumption that $K=K^{-1}$, one has that for any $k_1\neq k_2$, 
$$b_{k_1}\perp u_\gamma b_{k_2}u^*_\gamma,\quad \gamma\in K.$$
This proves Property (\ref{property-4-lem}).



Now consider the C*-algebra of the tower $(B, F)$, $$C := \mathrm{C}^*\{u_\gamma f: \gamma \in F, f\in\mathrm{C}_0(B)\}\subseteq A.$$  
Note that, by Lemma 3.12 of \cite{Niu-MD-Z}, 
$$ C \cong \mathrm{M}_{\abs{F}}(\mathrm{C}_0(B)),$$ 
and the isomorphism may be chosen such that, for any $g\in\mathrm{C}_0(\bigsqcup_{\gamma\in F} B\gamma)\subseteq \mathrm{C}(X)$, one has $g\in C$ and $$g \mapsto ( x\mapsto \sum_{\gamma\in F} g(x\gamma)e_{\gamma, \gamma}).$$
In particular, since for each $i=0, 1$, $k=1, 2, ..., n$ and $m=1, ..., M$, one has $b_k\varphi_{O_{i, m}} \in \mathrm{C}_0(\bigsqcup_{\gamma\in F} B\gamma)$, this implies $$b_k\varphi_{O_{i, m}} \in C.$$
Noting that $\Gamma_{i, j}$ are $(\F', \eps')$-invariant,  by \eqref{density-O} and \eqref{small-R-1-2}, regarding $b_k\varphi_{O_{i, m}}$ as an element of $ C \cong \mathrm{M}_{\abs{F}}(\mathrm{C}_0(B)),$ one has that for any $x\in B$,  
\begin{eqnarray*}
\mathrm{rank}(b_k\varphi_{O_{i, m}}(x)) & = & \abs{\set{\gamma\in F: (b_k\varphi_{O_{i, m}})(x\gamma) > 0}} \\
& = & \abs{\set{ \gamma\in \Gamma_{k, 1}\sqcup \Gamma_{k, 2} \sqcup\cdots\sqcup \Gamma_{k, n}: x\gamma \in O_{i, m}}}\\
& \geq &\frac{\delta}{2}(\abs{\Gamma_{k, 1}} + \cdots + \abs{\Gamma_{k, n}}) \\
& = & \frac{\delta}{2}n\abs{\Gamma_{k, 1}}  >  \frac{\delta}{4n}\abs{F}.
\end{eqnarray*}
Then, for any $\mu\in\mathcal M_1(X, \Gamma)$, by \eqref{smaller-E} for the last step,
\begin{eqnarray}\label{small-larger-set}
\mu(O_{i, m}\cap \bigsqcup_{j=n(k-1)+1}^{nk} E_j) & = &\mu (\set{x\in X: (b_k\varphi_{O_{i, m}})(x) > 0})\\
&=&\int_B \mathrm{rank}(b_k\varphi_{O_{i, m}}(x))\mathrm{d}\mu  \nonumber \\
& \geq & \int_B \frac{\delta}{4n}\abs{F}\mathrm{d}\mu \nonumber \\
& = & \frac{\delta}{4n}\abs{F}\mu(B) > \frac{\delta}{4n}\mu(\bigsqcup_{j=1}^{n^2}E_j). \nonumber
\end{eqnarray}

Now, for each $i=0, 1$, $k=1, 2, ..., n$,  let us construct open sets $O^{k}_{i, m}$, $m=1, 2, ..., M$. Note that (recall $\Gamma_k=\Gamma_{k, 1} \sqcup\cdots \sqcup \Gamma_{k, n})$ 
$$O_{i, m} \cap  \bigcup_{j=n(k-1)+1}^{nk} E_j = O_{i, m} \cap \bigsqcup_{\gamma\in \Gamma_{k}} B\gamma,\quad i=0, 1,\ m=1, 2, ..., M.$$
Consider the decomposition
$$\Gamma_k^\circ:= \Gamma_k\setminus (\Gamma_{k, 1}\cup\Gamma_{k, 2}\cup\Gamma_{k, 3})=\Gamma_{k, 4} \sqcup\Gamma_{k, 5}\sqcup \cdots \sqcup \Gamma_{k, n} ,$$ and define
$$O^k_{0, 1} := O_{0, 1} \cap\bigsqcup_{\gamma\in \mathrm{int}_{K_0^M}(\Gamma^\circ_{k})} B\gamma\quad
\mathrm{and}\quad O^k_{0, m}: = O^k_{0, 1} \kappa_{0, m},\quad m=1, 2, ..., M,$$ 
and
$$O^k_{1, 1} := O_{1, 1} \cap\bigsqcup_{\gamma\in \mathrm{int}_{K_0^M}(\Gamma_{k})} B\gamma\quad
\mathrm{and}\quad O^k_{1, m}: = O^k_{1, 1} \kappa_{1, m},\quad m=1, 2, ..., M.$$ 
Then it is clear that
$$O^{k}_{0, m} \subseteq O_{0, m}\cap \bigcup_{i=n(k-1)+4}^{nk} E_i \quad\mathrm{and}\quad O^{k}_{1, m} \subseteq O_{1, m}\cap \bigcup_{j=n(k-1)+1}^{nk} E_j,\quad m=1, 2, ..., M.$$

Since $\Gamma_{i, j}$ are $(\mathcal F', \eps')$-invariant, the sets $\Gamma^\circ_k$ are also $(\mathcal F', \eps')$-invariant. By \eqref{small-O-walk}, one has 
$$\frac{\abs{\partial_{K_0^M}(\Gamma_k^\circ)}}{\abs{\Gamma_k^\circ}} < \frac{\delta}{16},$$
and therefore, together with \eqref{large-n-0} and  \eqref{small-larger-set}, for any $\mu\in\mathcal M_1(X, \Gamma)$ and $i=0, 1$,

\begin{eqnarray*}
\mu(O^k_{i, 1}) & \geq & \mu(O_{i, 1} \cap\bigsqcup_{\gamma\in \mathrm{int}_{K_0^M}(\Gamma^\circ_{k})} B\gamma) \\
& \geq & \mu(O_{i, 1} \cap  \bigsqcup_{\gamma \in \Gamma^\circ_k} B\gamma) - \mu(\bigsqcup_{\gamma \in \partial_{K_0^M} (\Gamma^\circ_k)} B\gamma)  \\
& \geq & \mu(O_{i, 1} \cap  \bigsqcup_{\gamma \in \Gamma^\circ_k} B\gamma) -  \frac{\delta}{16}\abs{\Gamma^\circ_k} \mu(B) \\
& > & \mu(O_{i, 1} \cap  \bigsqcup_{j=n(k-1)+4}^{nk} E_j) -\frac{\delta}{16n} \mu(\bigsqcup_{j=1}^{n^2} E_j)\\
&\geq & \mu(O_{i, 1} \cap  \bigsqcup_{j=n(k-1)+1}^{nk} E_j)  - \frac{3}{n^2} \mu(\bigsqcup_{j=1}^{n^2}E_j) - \frac{\delta}{16n}\mu(\bigsqcup_{j=1}^{n^2} E_j) \\
&\geq & \frac{\delta}{4n}\mu(\bigsqcup_{j=1}^{n^2}E_j)   - \frac{\delta}{8n}\mu(\bigsqcup_{j=1}^{n^2} E_j) \\
& = &   \frac{\delta}{8n}\mu(\bigsqcup_{j=1}^{n^2} E_j).
\end{eqnarray*}
This proves Property (\ref{property-2-lem}), as desired.
\end{proof}

\begin{lem}\label{division-space}
Let $\Gamma$ be an infinite discrete amenable group, and  let $(X, \Gamma)$ be a minimal topological dynamical system with the (URP). Let $\lambda>0$ be arbitrary, and let $$O_{0, 1}, ..., O_{0, M},  O_{1, 1}, ..., O_{1, M} \subseteq X$$ be mutually disjoint non-empty open sets and
$$\{\kappa_{0,1}(=e), \kappa_{0, 2}, ..., \kappa_{0, M}, \kappa_{1,1}(=e), \kappa_{1, 2}, ..., \kappa_{1, M}\}\subseteq\Gamma$$ a finite family in $\Gamma$ such that $$O_{i, m}=O_{i, 1}\kappa_{i, m},\quad i=0, 1, \ m=1, ..., M.$$
Let $K\subseteq\Gamma$ be a symmetric finite set. Let $N>0$ be arbitrary.

Then there exist $n\in\mathbb N$ with $n>N$ and an order zero c.p.c.~map $$\phi: \mathrm{M}_{n^2}(\Comp) \to A,$$
where $A=\mathrm{C}(X)\rtimes\Gamma$, 
such that if $$h:=\phi(1)\quad\mathrm{and}\quad e_{i} := \phi(e_{i, i}),\quad i=1, 2, ..., n^2,$$
and 
$$b_k:= e_{n(k-1)+1} + \cdots + e_{n(k-1)+n},\quad k=1, 2, ..., n,$$
then 
$$e_i \in\mathrm{C}(X)\subseteq \mathrm{C}(X)\rtimes\Gamma,$$
and with $$E_i = e_{i}^{-1}((0, 1]),\quad i=1, 2, ..., n^2,$$
one has
\begin{enumerate}

\item\label{property-1-cob} for each $k=1, 2, ..., n$, there are mutually disjoint open sets $O^{k}_{0, 1}, ..., O^{k}_{0, M}$ and $O^{k}_{1, 1}, ..., O^{k}_{1, M}$  such that
         \begin{enumerate}
         \item $O^k_{0, m} \subseteq O_{0, m}\cap \bigsqcup_{i=4}^{n} E_{n(k-1)+i}$ and $O^k_{1, m} \subseteq O_{1, m}\cap \bigsqcup_{i=1}^{n} E_{n(k-1)+i}$, $m=1, 2, ..., M$,
         \item $O^k_{i, m} = O^k_{i, 1}\kappa_{i, m}$, $i=0, 1$, $m=1, 2, ..., M$,
         \item $$ \lambda \mu(O^k_{0, 1}) > \frac{3}{n^2}\quad\mathrm{and}\quad \lambda \mu(O^k_{1, 1}) > \mu(X\setminus \bigsqcup_{i=1}^{n^2} E_i),\quad \mu\in\mathcal M_1(X, \Gamma),$$
         \end{enumerate}

\item\label{property-3-cob} $$b_{k_1} \perp u_\gamma b_{k_2} u^*_\gamma,\quad \gamma\in K,\ k_1 \neq k_2,\ 1\leq k_1, k_2\leq n,$$ where $u_\gamma\in A$ is the canonical unitary of $\gamma$.

\end{enumerate}
\end{lem}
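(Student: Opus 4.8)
The plan is to globalize Lemma~\ref{division-tower} by means of the (URP): one first produces, inside a single Rokhlin tower, the order zero map and the open sets with the required properties, and then sums these local data over a Rokhlin decomposition of $X$ whose tower regions are mutually disjoint and exhaust $X$ up to arbitrarily small measure. Concretely, I would first apply Lemma~\ref{division-tower} to the given data $\lambda$, $O_{i,m}$, $\kappa_{i,m}$, $K$, obtaining a pair $(\mathcal F,\eps)$ and an integer $n>3$ (which may be taken as large as we please), and write $\delta=\min\{\mu(O_{i,m})\}>0$ as there. I would fix $n$ large and $\eta>0$ small (both depending only on $\lambda$, $\delta$, $n$) to close the measure bookkeeping below. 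Applying the (URP) with the finite set $\mathcal F$ and tolerance $\min\{\eps,\eta\}$ gives open Rokhlin towers $(B_1,F_1),\dots,(B_S,F_S)$ with each $F_s$ being $(\mathcal F,\eps)$-invariant, with the regions $\mathcal T_s:=\bigsqcup_{\gamma\in F_s}B_s\gamma$ mutually disjoint, and with $\mu(X\setminus\bigsqcup_{s=1}^S\mathcal T_s)<\eta$ for all $\mu\in\mathcal M_1(X,\Gamma)$. Lemma~\ref{division-tower} then applies to every $(B_s,F_s)$ with the \emph{same} $n$, producing order zero c.p.c.\ maps $\phi_s\colon\mathrm M_{n^2}(\Comp)\to A$, with $e_i^{(s)}=\phi_s(e_{i,i})\in\mathrm C(X)$, $E_i^{(s)}=(e_i^{(s)})^{-1}((0,1])$, $b_k^{(s)}=e_{n(k-1)+1}^{(s)}+\cdots+e_{nk}^{(s)}$, and open sets $O^{k,(s)}_{i,m}$, all localized inside $\mathcal T_s$.

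Next I would assemble the global objects $\phi:=\sum_{s=1}^S\phi_s$, and accordingly $e_i:=\sum_s e_i^{(s)}$, $E_i:=\bigsqcup_s E_i^{(s)}$, $b_k:=\sum_s b_k^{(s)}$, and $O^k_{i,m}:=\bigsqcup_s O^{k,(s)}_{i,m}$. From the tower construction (Lemma~\ref{existence-0-map}) each $\phi_s$ takes values in the hereditary subalgebra of $\mathcal T_s$; more precisely $\phi_s(x)=p_s\phi_s(x)p_s$, where $p_s$ is the spectral projection of $\chi_{\mathcal T_s}$ in $A^{**}$. Since the $\mathcal T_s$ are disjoint, the $p_s$ are mutually orthogonal, so the ranges of the $\phi_s$ are mutually orthogonal; hence $\phi$ is again order zero, with $\norm{\phi(x)}=\max_s\norm{\phi_s(x)}\le\norm{x}$, so $\phi$ is c.p.c., and clearly $\phi(e_{i,i})=e_i\in\mathrm C(X)$. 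The sets $O^k_{i,m}$ are open (disjoint unions), and as $O^{k,(s)}_{i,m}\subseteq O_{i,m}$ with the $O_{i,m}$ mutually disjoint, the $O^k_{0,1},\dots,O^k_{1,M}$ are mutually disjoint. Properties~(1)(a) and~(1)(b) then follow by taking unions over $s$ in the inclusions $O^{k,(s)}_{i,m}\subseteq O_{i,m}\cap\bigsqcup_{\bullet}E_\bullet^{(s)}$ and the relations $O^{k,(s)}_{i,m}=O^{k,(s)}_{i,1}\kappa_{i,m}$ supplied by Property~\ref{property-2-lem} of Lemma~\ref{division-tower}.

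For Property~(2) I would split $b_{k_1}(u_\gamma b_{k_2}u_\gamma^*)=\sum_{s,s'}b_{k_1}^{(s)}(u_\gamma b_{k_2}^{(s')}u_\gamma^*)$ into diagonal and off-diagonal parts. The terms with $s=s'$ vanish by Property~\ref{property-4-lem} of Lemma~\ref{division-tower}. For $s\ne s'$ I would use the fact, from the construction of Lemma~\ref{division-tower}, that $b_k^{(s)}=\sum_{\gamma\in\Gamma_k^{(s)}}u_\gamma^*\varphi_{B_s}u_\gamma$ is a positive function supported on $\bigsqcup_{\gamma\in\Gamma_k^{(s)}}B_s\gamma$ with $\Gamma_k^{(s)}K\subseteq F_s$ (the $K$-interior condition built into that proof, cf.~\eqref{K-separated-level-1}). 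Since $K=K^{-1}$, for $\gamma\in K$ the function $u_\gamma b_{k_2}^{(s')}u_\gamma^*$ is supported on $\bigsqcup_{\gamma'\in\Gamma_{k_2}^{(s')}}B_{s'}\gamma'\gamma^{-1}\subseteq\mathcal T_{s'}$, whereas $b_{k_1}^{(s)}$ is supported in $\mathcal T_s$; as $\mathcal T_s\cap\mathcal T_{s'}=\varnothing$, the two are orthogonal. This \emph{cross-tower orthogonality} is the main point of the argument, and I expect it to be the only delicate step: it is precisely the requirement $\Gamma_k^{(s)}K\subseteq F_s$ that keeps the $K$-translates of each local $b_k^{(s)}$ inside its own tower region, so that the disjointness of the regions from the (URP) can be invoked. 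Everything else is additive bookkeeping over the finitely many towers.

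Finally, Property~(1)(c) is a measure count. Writing $E:=\bigsqcup_{i=1}^{n^2}E_i$, disjointness of the towers gives $\mu(O^k_{i,1})=\sum_s\mu(O^{k,(s)}_{i,1})>\tfrac{\delta}{8n}\sum_s\mu\big(\bigsqcup_i E_i^{(s)}\big)=\tfrac{\delta}{8n}\mu(E)$ by the measure lower bound in Property~\ref{property-2-lem} of Lemma~\ref{division-tower}. Combining the (URP) bound $\mu(X\setminus\bigsqcup_s\mathcal T_s)<\eta$ with Property~\ref{property-1-lem} of Lemma~\ref{division-tower} summed over $s$ yields $\mu(X\setminus E)<\eta+\lambda\tfrac{\delta}{16n}$, hence $\mu(E)>1-\eta-\lambda\tfrac{\delta}{16n}$, for all $\mu$. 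Choosing $\eta<\lambda\tfrac{\delta}{16n}$ and $n$ large, the gap between the coefficients $\tfrac18$ and $\tfrac{1}{16}$ gives $\lambda\tfrac{\delta}{8n}\mu(E)>\eta+\lambda\tfrac{\delta}{16n}>\mu(X\setminus E)$, so $\lambda\mu(O^k_{1,1})>\mu(X\setminus E)$; and $\mu(E)>\tfrac12$ together with $n>48/(\lambda\delta)$ gives $\lambda\mu(O^k_{0,1})>\lambda\tfrac{\delta}{8n}\mu(E)>\tfrac{3}{n^2}$. This verifies Property~(1)(c) and completes the proof.
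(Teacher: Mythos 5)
Your proposal follows essentially the same route as the paper's proof: apply Lemma \ref{division-tower} to each tower of a (URP) decomposition with a fixed large $n$, sum the resulting order zero maps (which have mutually orthogonal ranges since the tower regions are disjoint), take unions of the local open sets, and obtain Property (2) from the disjointness of the regions together with the $K$-interior condition $\Gamma_k^{(s)}K\subseteq F_s$ built into Lemma \ref{division-tower}. The one quibble is the final measure count: $\eta<\lambda\tfrac{\delta}{16n}$ alone does not force $\lambda\tfrac{\delta}{8n}\mu(E)>\eta+\lambda\tfrac{\delta}{16n}$ (take $\eta$ close to $\lambda\tfrac{\delta}{16n}$, so the right-hand side approaches $\lambda\tfrac{\delta}{8n}$ while the left-hand side stays strictly below it), so $\eta$ must be taken somewhat smaller --- the paper uses $\eta=\lambda\tfrac{\delta}{32n}$, which makes the right-hand side at most $\lambda\tfrac{3\delta}{32n}$ while $\mu(E)>\tfrac34$ pushes the left-hand side above that --- but this is exactly the constant-tuning you explicitly reserved for yourself, so it is a cosmetic rather than a substantive gap.
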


\begin{proof}
Applying Lemma \ref{division-tower} with respect to $O_{0, 1}, O_{0, 2}, ..., O_{0, M}$ and $O_{1, 1}, O_{1, 2}, ..., O_{1, M}$, and 
$$\delta:=\min\{\mu(O_{i, m}): i=0, 1, m=1, ..., M,\  \mu\in\mathcal M_1(X, \Gamma)\}>0,$$
one obtains $(\mathcal F', \eps')$ and $n$. Since $n$ can be chosen arbitrarily large, we may assume that $n>N$ and 
\begin{equation}\label{large-n}
\frac{3}{n^2} < \lambda\frac{3\delta}{32n} < \frac{1}{4}.
\end{equation}

Since $(X, \Gamma)$ is assumed to have the (URP), there exist open towers $$(B_1, F_1), ..., (B_S, F_S)$$ such that each $F_s$, $s=1, ..., S$, is $(\mathcal F', \eps')$-invariant and
\begin{equation}\label{small-left-global}
\mu(X\setminus\bigsqcup_{s=1}^S\bigsqcup_{\gamma\in F_s} B_s\gamma) < \lambda \frac{\delta}{32n},\quad \mu\in\mathcal M_1(X, \Gamma).
\end{equation}

For each tower $(B_s, F_s)$, since $F_s$ is $(\mathcal F', \eps')$-invariant, by Lemma \ref{division-tower}, there is an order zero c.p.c.~map $$\phi_s: \mathrm{M}_{n^2}(\Comp) \to A,$$
where $A = \mathrm{C}(X)\rtimes\Gamma$, 
such that if $$h_s:=\phi_s(1)\quad\mathrm{and}\quad e^{(s)}_{i} := \phi_s(e_{i, i}),\quad i=1, 2, ..., n^2,$$
and 
$$b_{s, k}:= e^{(s)}_{n(k-1)+1} + \cdots + e^{(s)}_{n(k-1)+n},\quad k=1, 2, ..., n,$$
then 
$$e^{(s)}_{i} \in\mathrm{C}(X)$$
and
if denote by $$E_{s, i} = (e^{(s)}_{i})^{-1}((0, 1]),\quad i=1, 2, ..., n^2,$$
then
\begin{enumerate}

\item\label{property-1-lem-comb} $$\bigsqcup_{i=1}^{n^2} E_{s, i} \subseteq \bigsqcup_{\gamma\in F_s} B_s\gamma,$$ and
         \begin{equation}\label{small-left-ind} 
         \mu(\bigsqcup_{\gamma\in F_s} B_s\gamma \setminus \bigsqcup_{i=1}^{n^2} E_{s, i}) < \lambda \frac{\delta}{16n}\mu(\bigsqcup_{\gamma\in F_s} B_s\gamma),\quad \mu\in\mathcal M_1(X, \Gamma).
         \end{equation}

\item\label{property-2-lem-comb} for each $k=1, 2, ..., n$, there are open sets $O^{k, s}_{0, 1}, ..., O^{k, s}_{0, M}$ and $O^{k, s}_{1, 1}, ..., O^{k, s}_{1, M}$ such that

         \begin{enumerate}
         
         \item\label{property-2-1-lem-comb} $O^{k, s}_{0, m} \subseteq O_{0, m}\cap \bigsqcup_{j=n(k-1)+4}^{nk} E_{s, j}$ and $O^{k, s}_{1, m} \subseteq O_{1, m}\cap \bigsqcup_{j=n(k-1)+1}^{nk} E_{s, j}$, $m=1, 2, ..., M$,
         
         \item\label{property-2-2-lem-comb} $O^{k, s}_{i, m} = O^{k, s}_{i, 1}\kappa_{i, m}$, $i=0, 1$, $m=1, 2, ..., M$,
         
         \item 
         \begin{equation}\label{dense-O-1-ind}
         \mu(O^{k, s}_{0, 1}),\  \mu(O^{k, s}_{1, 1}) > \frac{\delta}{8n}\mu(\bigsqcup_{i=1}^{n^2}E_{s, i}),\quad \mu\in\mathcal M_1(X, \Gamma).
         \end{equation}
         
         \end{enumerate}

\item\label{property-4-lem-comb} $b_{s, k_1} \perp u_\gamma b_{s, k_2} u^*_\gamma,$ $\gamma\in K,\ k_1\neq k_2,\ 1\leq k_1, k_2\leq n.$

\end{enumerate}

Then, the order zero c.p.c.~map
\begin{equation}\label{defn-0-map}
\phi:=\sum_{s=1}^S\phi_s
\end{equation} 
satisfies the requirements.

Indeed, it follows \eqref{defn-0-map} that 
$$h= \phi(1) = \sum_{s=1}^S\phi_s(1) = h_1 + \cdots + h_S,$$
$$e_i= \phi(e_{i, i}) = \sum_{s=1}^S \phi_s(e_{i, i}) = e^{(1)}_{i} + \cdots + e^{(S)}_{i},\quad i=1, 2, ..., n^2.$$
In particular,
$$b_k= b_{1, i} + \cdots + b_{S, i},\quad i=1, 2, ..., n^2,$$
and 
$$E_i = e^{-1}_i ((0, 1])= E_{1, i} \sqcup \cdots \sqcup E_{S, i}, \quad i=1, 2, ..., n^2.$$

For each $i=0, 1$,  $k=1, 2, ..., n$, and $m=1, 2, ..., M$, set $$O^k_{i, m} = \bigsqcup_{s=1}^S O_{i, m}^{k, s}.$$ By Conditions (\ref{property-2-1-lem-comb}) and (\ref{property-2-2-lem-comb}), it is clear that
$$O^k_{0, m} \subseteq O_{0, m}\cap \bigsqcup_{j=4}^{n} E_{n(k-1)+j}\quad\mathrm{and}\quad O^k_{i, m} \subseteq O_{i, m}\cap \bigsqcup_{j=1}^{n} E_{n(k-1)+j},$$ and $$O^k_{i, m} = O^k_{i, 1}\kappa_m,\quad i=0, 1, \ m=1, 2, ..., M.$$
         
By \eqref{small-left-global}, \eqref{small-left-ind}, and \eqref{large-n},  for any $\mu\in\mathcal M_1(X, \Gamma)$, 
\begin{eqnarray}\label{small-left}
\mu(X\setminus \bigsqcup_{i=1}^{n^2} E_i)  & = & \mu(X\setminus \bigsqcup_{s=1}^S \bigsqcup_{i=1}^{n^2} E_{s, i}) \\
& =  & \mu( X\setminus \bigsqcup_{s=1}^S \bigsqcup_{\gamma\in F_s} B_s\gamma )  +   \sum_{s=1}^S\mu(\bigsqcup_{\gamma\in F_s} B_s\gamma \setminus \bigsqcup_{i=1}^{n^2} E_{s, i}) \nonumber \\
& < &\lambda\frac{\delta}{32n} + \sum_{s=1}^S \lambda \frac{\delta}{16n}\mu(\bigsqcup_{\gamma\in F_s} B_s\gamma) \nonumber \\ 
& < &\lambda\frac{\delta}{32n} + \lambda\frac{\delta}{16n} <  \lambda\frac{3\delta}{32n} < \frac{1}{4}.\nonumber 
\end{eqnarray}
and then, by \eqref{dense-O-1-ind} and \eqref{small-left}
\begin{eqnarray*}
\lambda \mu(O^k_{i, 1}) & = &\lambda\sum_{s=1}^S\mu(O_{i, 1}^{k, s}) 
 > \lambda  \frac{\delta}{8n} \sum_{s=1}^S \mu(\bigsqcup_{i=1}^{n^2}E_{s, i})\\
&> & \lambda  \frac{\delta}{8n} (1-\frac{1}{4})  =   \lambda  \frac{3\delta}{32n} \\
&> & \mu(X\setminus \bigsqcup_{i=1}^{n^2} E_i).
\end{eqnarray*}
Also note that, by \eqref{large-n}, 
$$\lambda \mu(O^k_{0, 1}) > \lambda  \frac{3\delta}{32n} > \frac{3}{n^2}.$$
This verifies Property (\ref{property-1-cob}).

Property (\ref{property-3-cob}) follows from Condition (\ref{property-4-lem-comb}) straightforwardly. 
\end{proof}

Next, let us perturb further the order zero map $\phi$ obtained in Lemma \ref{division-space}. First, we make the following simple observation.

\begin{lem}\label{open-close-gap}
Let $X$ be compact metrizable space, and let $T$ be a compact set of probability Borel measures. 
\begin{enumerate}
\item If $O\subseteq X$ is an open set and $\lambda, \delta>0$ satisfy $$\lambda\mu(O) > \delta,\quad\mu\in T,$$ then there is a closed set $D\subseteq O$ such that $$\lambda\mu(D) > \delta,\quad\mu\in T.$$
\item If $O\subseteq X$ is an open set and $C\subseteq X$ is closed set satisfying $$\lambda\mu(O) > \mu(C),\quad \mu\in T,$$ for some $\lambda>0$, then there exist a closed set $D\subseteq O$ and an open set $F \supseteq C$ such that $$\lambda\mu(D) > \mu(F),\quad \mu\in T.$$
\end{enumerate}
\end{lem}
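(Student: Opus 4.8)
The plan is to exploit the interplay between the regularity of Radon measures and the weak* continuity of the functionals $\mu\mapsto\int f\,d\mu$ for $f\in\mathrm{C}(X)$, using compactness of $T$ to pass from a pointwise (in $\mu$) statement to a uniform one. First I would record the two approximation formulas valid for every regular Borel probability measure: for an open set $O$,
$$\mu(O) = \sup\set{\int f\,d\mu : f\in\mathrm{C}(X),\ 0\leq f\leq 1,\ \mathrm{supp}(f)\subseteq O},$$
and for a closed set $C$,
$$\mu(C) = \inf\set{\int h\,d\mu : h\in\mathrm{C}(X),\ \chi_C\leq h\leq 1}.$$
Both follow from inner/outer regularity together with Urysohn's lemma.

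For Part (1), fix $\mu\in T$. Since $\lambda\mu(O) > \delta$, the first formula yields a continuous $f_\mu$ supported in $O$ with $\lambda\int f_\mu\,d\mu > \delta$. The set $V_\mu := \set{\nu\in T : \lambda\int f_\mu\,d\nu > \delta}$ is weak*-open and contains $\mu$, so $\{V_\mu\}_{\mu\in T}$ is an open cover of the compact set $T$. I would extract a finite subcover $V_{\mu_1},\dots,V_{\mu_k}$ and set $f := \max(f_{\mu_1},\dots,f_{\mu_k})$, which is continuous, satisfies $0\leq f\leq 1$, and has support (a finite union of supports) contained in $O$. Since $f\geq f_{\mu_i}$, every $\nu\in T$ satisfies $\lambda\int f\,d\nu > \delta$. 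Taking $D := \mathrm{supp}(f)$, a closed subset of $O$ with $f\leq\chi_D$, gives $\lambda\mu(D)\geq\lambda\int f\,d\mu > \delta$ for all $\mu\in T$, as desired.

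For Part (2) I would run the same compactness argument simultaneously on $O$ and $C$: for each $\mu$ choose $g_\mu$ supported in $O$ and $h_\mu$ with $\chi_C\leq h_\mu\leq 1$ so that $\lambda\int g_\mu\,d\mu > \int h_\mu\,d\mu$, cover $T$ by the weak*-open sets where this strict inequality persists, pass to a finite subcover, and form $g := \max_i g_{\mu_i}$ and $h := \min_i h_{\mu_i}$. Then $g$ is supported in $O$, $\chi_C\leq h\leq 1$, and $\lambda\int g\,d\mu > \int h\,d\mu$ for every $\mu\in T$. Setting $D := \mathrm{supp}(g)\subseteq O$ again handles the left-hand side via $\lambda\mu(D)\geq\lambda\int g\,d\mu$.

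The one genuinely delicate point — the place I expect to be the main obstacle — is producing the open set $F\supseteq C$ on the right-hand side, because a threshold set $F_t := \set{h > t}$ only satisfies $\chi_{F_t}\leq h/t$, so the naive bound $\mu(F_t)\leq t^{-1}\int h\,d\mu$ carries an unwanted factor $t^{-1} > 1$ that could destroy the inequality. To absorb it I would first upgrade the pointwise strict inequality to a uniform gap: since $\mu\mapsto \lambda\int g\,d\mu - \int h\,d\mu$ is weak*-continuous and strictly positive on the compact set $T$, it attains a minimum $\rho > 0$. Choosing $t\in(\tfrac{1}{1+\rho},1)$ makes $\tfrac{1-t}{t}<\rho$, and then (noting $C\subseteq\set{h>t}$ since $h=1$ on $C$) for every $\mu\in T$,
$$\mu(F_t)\leq \tfrac{1}{t}\int h\,d\mu \leq \int h\,d\mu + \tfrac{1-t}{t} < \int h\,d\mu + \rho \leq \lambda\int g\,d\mu \leq \lambda\mu(D),$$
using $\int h\,d\mu\leq 1$. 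Taking $F := F_t$, an open neighbourhood of $C$, then completes the argument.
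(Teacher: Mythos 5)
Your proof is correct and follows essentially the same route as the paper's: for each $\mu$ approximate $\mu(O)$ from inside and $\mu(C)$ from outside by continuous functions, use weak* continuity together with compactness of $T$ to reduce to finitely many functions and take their max/min, and then convert the resulting uniform strict inequality back into a closed set $D\subseteq O$ and an open set $F\supseteq C$. The only cosmetic differences are that you take $D=\mathrm{supp}(g)$ instead of a superlevel set $f^{-1}([\eps,1])$, and you make explicit (via the uniform gap $\rho$ and the threshold $t$) the final step that the paper leaves as ``with a sufficiently small $\eps>0$.''
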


\begin{proof}
Let us prove the second statement only. The first statement can be shown with a similar argument.

For any $\mu\in T$, pick continuous functions $f_\mu, g_\mu: X \to [0, 1]$ such that $f_\mu|_{X\setminus O} = 0$, $g_\mu|_C =1$, and 
$$\lambda \tau_\mu(f_\mu) > \tau_\mu(g_\mu) + \delta_\mu $$ 
for some $\delta_\mu >0$, where $\tau_\mu(f):=\int f d\mu$. Then, pick a open neighborhood $N_\mu$ of $\mu$ such that 
$$\lambda\abs{\tau_{\mu}(f_\mu) - \tau_{\mu'}(f_\mu)} < \frac{\delta_\mu}{4}\quad\mathrm{and}\quad \abs{\tau_{\mu}(g_\mu) - \tau_{\mu'}(g_\mu)} < \frac{\delta_\mu}{4} ,\quad \tau'\in N_\mu,$$ and a straightforward calculation shows
$$\lambda\tau_{\mu'}(f_\mu) > \tau_{\mu'}(g_\mu) + \frac{\delta_\mu}{2},\quad \mu'\in N_\mu.$$
Since $T$ is compact, there is a finite open cover of $T$ consists of $N_{\mu_1}$, ..., $N_{\mu_n}$, where $\mu_1, ..., \mu_n \in T$. With $$f:=\max\{f_{\mu_1}, ..., f_{\mu_n} \},\quad g:=\min\{g_{\mu_1}, ..., g_{\mu_n} \},\quad\mathrm{and}\quad \delta := \frac{1}{2}\min\{\delta_{\mu_1}, ..., \delta_{\mu_n} \},$$ one has
$$f|_{X\setminus O} = 0,\quad g|_C =1,\quad\mathrm{and}\quad \lambda \tau_\mu(f) > \tau_\mu(g) + \delta,\quad \mu\in T.$$ 
Then, with a sufficiently small $\eps>0$, the closed set $D:=f^{-1}([\eps, 1])$ and the open set $F:= g^{-1}((1-\eps, 1])$  satisfy the lemma.
\end{proof}

\begin{lem}\label{division-space-map}
Let $\Gamma$ be an infinite group, and let $(X, \Gamma)$ be a minimal topological dynamical system with the (URP).  Let $\lambda>0$ be arbitrary, and let $O_{0, 1}, ..., O_{0, M},  O_{1, 1}, ..., O_{1, M} \subseteq X$ be mutually disjoint non-empty open sets and
$$\{\kappa_{0,1}(=e), \kappa_{0, 2}, ..., \kappa_{0, M}, \kappa_{1,1}(=e), \kappa_{1, 2}, ..., \kappa_{1, M}\}\subseteq\Gamma$$ a finite family such that $$O_{i, m}=O_{i, 1}\kappa_{i, m},\quad i=0, 1, \ m=1, ..., M.$$
Let $K\subseteq\Gamma$ be a symmetric finite set. Let $N>0$ be arbitrary.

Then there is an order zero c.p.c.~map $$\phi: \mathrm{M}_{n^2}(\Comp) \to \mathrm{C}(X) \rtimes\Gamma$$ for some $n>N$, 
such that with $$h:=\phi(1)\quad\mathrm{and}\quad e_{i} := \phi(e_{i, i}),\quad i=1, 2, ..., n^2,$$
and 
$$b_k:= e_{n(k-1)+1} + \cdots + e_{n(k-1)+n},\quad k=1, 2, ..., n,$$
so that
$$e_i \in\mathrm{C}(X)\subseteq A,$$
we have

\begin{enumerate}

\item\label{lem-map-cond-1} for each $k=1, 2, ..., n$, there are mutually orthogonal positive functions $$c_{k, 1}, ..., c_{k, M}, d_{k, 1}, ..., d_{k, M} \in\mathrm{C}(X)$$ such that 

\begin{enumerate}
          \item\label{lem-cond-1a} $c_{k, m} \in\mathrm{Her}(O_{0, m})$ and $d_{k, m} \in\mathrm{Her}(O_{1, m})$, $m=1, 2, ..., M$,
          \item\label{lem-cond-1-orth} $c_{k, m} \perp (e_{(k-1)n+1} + e_{(k-1)n+2}  + e_{(k-1)n+3})$, $m=1, 2, ..., M$,
          \item\label{lem-cond-1b} $c_{k, m}b_k = c_{k, m}$ and $d_{k, m}b_k = d_{k, m}$, $m =1, 2, ..., M,$
          \item\label{lem-cond-1c} $c_{k, m} = u^*_{\kappa_m} c_{k, 1}u_{\kappa_m}$ and $d_{k, m} = u^*_{\kappa_m} d_{k, 1}u_{\kappa_m}$, $m=1, 2, ..., M,$ and 
          \item\label{lem-cond-1d} $\lambda \mathrm{d}_\tau(c_{k, 1})> \frac{3}{n^2}$ and $\lambda \mathrm{d}_\tau(d_{k, 1})> \mathrm{d}_\tau(1-h)$, $\tau\in\mathrm{T}(A),$
\end{enumerate}

\item\label{lem-map-cond-2}

 $$b_{k_1} \perp u_\gamma b_{k_2} u^*_\gamma,\quad \gamma\in K,\  k_1\neq k_2,\ 1\leq k_1, k_2\leq n,$$ where $u_\gamma\in A = \mathrm{C}(X) \rtimes\Gamma$ is the canonical unitary.

\end{enumerate}
\end{lem}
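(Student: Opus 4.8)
The plan is to take the order zero map produced by Lemma \ref{division-space}, rescale it by a cut-off $f_\eps$, and read off the functions $c_{k,m},d_{k,m}$ from the open sets $O^k_{i,m}$ supplied there. First I would apply Lemma \ref{division-space} to the given data $O_{i,m}$, $\kappa_{i,m}$, $K$, $\lambda$, obtaining $n>3$ and an order zero c.p.c.~map $\phi_0\colon\mathrm{M}_{n^2}(\Comp)\to A$ with $e_i^0:=\phi_0(e_{i,i})\in\mathrm{C}(X)$, $h_0:=\phi_0(1)$, $E_i:=(e_i^0)^{-1}((0,1])$, and $b_k^0:=\sum_{j=1}^n e^0_{n(k-1)+j}$, together with open sets $O^k_{i,m}$ satisfying the covariance $O^k_{i,m}=O^k_{i,1}\kappa_{i,m}$, the inclusions $O^k_{0,m}\subseteq O_{0,m}\cap\bigsqcup_{i=4}^n E_{n(k-1)+i}$ and $O^k_{1,m}\subseteq O_{1,m}\cap\bigsqcup_{i=1}^n E_{n(k-1)+i}$, the comparisons $\lambda\mu(O^k_{0,1})>3/n^2$ and $\lambda\mu(O^k_{1,1})>\mu(X\setminus\bigsqcup_i E_i)$ for all $\mu\in\mathcal M_1(X,\Gamma)$, and $b^0_{k_1}\perp u_\gamma b^0_{k_2}u_\gamma^*$. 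The final map will be $\phi:=f_\eps(\phi_0)$ for a small $\eps>0$ fixed below; then $\phi$ is again order zero c.p.c., $e_i=f_\eps(e_i^0)$, $h=f_\eps(h_0)$, and (since the $e_i^0$ have disjoint supports) $b_k=f_\eps(b_k^0)$, while for any $\tau\in\mathrm{T}(A)$ its restriction $\mu_\tau:=\tau|_{\mathrm{C}(X)}$ is invariant and $\mathrm{d}_\tau(1-h)=\mu_\tau(\{h_0<\eps\})$.

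Next I would tighten the two comparisons with Lemma \ref{open-close-gap}, using that $\mathcal M_1(X,\Gamma)$ is compact. Part (1) gives a closed set $D^k_0\subseteq O^k_{0,1}$ with $\lambda\mu(D^k_0)>3/n^2$; part (2), applied to the open set $O^k_{1,1}$ and the closed set $X\setminus\bigsqcup_i E_i$, gives a closed set $D^k_1\subseteq O^k_{1,1}$ and an open set $F^k\supseteq X\setminus\bigsqcup_i E_i$ with $\lambda\mu(D^k_1)>\mu(F^k)$, uniformly over $\mathcal M_1(X,\Gamma)$. I then choose $\eps>0$ small enough that (i) $\eps$ lies below the minimum of $h_0$ on each of the finitely many compact translates $D^k_0\kappa_{0,m}$ and $D^k_1\kappa_{1,m}$ (these minima are strictly positive, as each translate sits inside $\bigsqcup_i E_i=\{h_0>0\}$), and (ii) $\{h_0\le\eps\}\subseteq F^k$ for every $k$; the latter holds for small $\eps$ since $\{h_0\le\eps\}\downarrow\{h_0=0\}=X\setminus\bigsqcup_i E_i\subseteq F^k$, so $\{h_0\le\eps\}\setminus F^k$ is eventually empty by compactness.

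Finally I would build the functions covariantly. For each $k$ pick positive $c_{k,1},d_{k,1}\in\mathrm{C}(X)$ with
$$D^k_0\subseteq\mathrm{supp}(c_{k,1})\subseteq O^k_{0,1}\cap\bigcap_{m}\{h_0\circ\kappa_{0,m}>\eps\},\qquad D^k_1\subseteq\mathrm{supp}(d_{k,1})\subseteq O^k_{1,1}\cap\bigcap_{m}\{h_0\circ\kappa_{1,m}>\eps\};$$
the open supersets contain the closed sets by choice (i). Set $c_{k,m}:=u^*_{\kappa_{0,m}}c_{k,1}u_{\kappa_{0,m}}=c_{k,1}\circ\kappa_{0,m}^{-1}$ and $d_{k,m}:=u^*_{\kappa_{1,m}}d_{k,1}u_{\kappa_{1,m}}$, giving \eqref{lem-cond-1c}; their supports are the right translates $\mathrm{supp}(c_{k,1})\kappa_{0,m}\subseteq O^k_{0,m}\subseteq O_{0,m}$ and $\mathrm{supp}(d_{k,1})\kappa_{1,m}\subseteq O^k_{1,m}\subseteq O_{1,m}$, yielding \eqref{lem-cond-1a} and, since the $O_{i,m}$ are pairwise disjoint, the mutual orthogonality of the $2M$ functions. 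Condition \eqref{lem-cond-1-orth} holds because $\mathrm{supp}(c_{k,m})\subseteq\bigsqcup_{i=4}^n E_{n(k-1)+i}$ is disjoint from $E_{n(k-1)+1},E_{n(k-1)+2},E_{n(k-1)+3}$. For \eqref{lem-cond-1b}, note that translating $\{h_0\circ\kappa_{0,m}>\eps\}$ by $\kappa_{0,m}$ shows $\mathrm{supp}(c_{k,m})\subseteq\{h_0>\eps\}$, and likewise for $d_{k,m}$; on $\{h_0>\eps\}\cap\bigsqcup_{i=1}^n E_{n(k-1)+i}$ the unique nonzero $f_\eps(e^0_{n(k-1)+j})$ equals $1$, so $b_k=1$ there. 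Condition \eqref{lem-cond-1d} follows from $\lambda\mathrm{d}_\tau(c_{k,1})=\lambda\mu_\tau(\mathrm{supp}\,c_{k,1})\ge\lambda\mu_\tau(D^k_0)>3/n^2$ and $\lambda\mathrm{d}_\tau(d_{k,1})\ge\lambda\mu_\tau(D^k_1)>\mu_\tau(F^k)\ge\mu_\tau(\{h_0<\eps\})=\mathrm{d}_\tau(1-h)$; and Property \eqref{lem-map-cond-2} is inherited from $b^0_{k_1}\perp u_\gamma b^0_{k_2}u_\gamma^*$ because $b_k=f_\eps(b_k^0)$ has support inside that of $b_k^0$. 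The main obstacle is exactly the pair \eqref{lem-cond-1b} and the $d$-estimate in \eqref{lem-cond-1d}: the unscaled choice $\phi=\phi_0$ fails, since $b_k^0=1$ only where $h_0=1$ and $\mathrm{d}_\tau(1-h_0)=\mu(\{h_0<1\})$ overcounts the tower boundaries. Both are cured by the $f_\eps$-rescaling, whose level $\eps$ must be driven below the tower-boundary scale—hence the two compactness arguments that fix $\eps$.
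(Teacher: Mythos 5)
The proposal is correct and takes essentially the same route as the paper: apply Lemma \ref{division-space}, tighten the two measure inequalities with Lemma \ref{open-close-gap} via compactness of $\mathcal M_1(X,\Gamma)$, rescale the order zero map by $f_\eps$ with $\eps$ fixed by a compactness argument, and obtain $c_{k,m}, d_{k,m}$ as $\kappa$-translates of functions supported in $O^k_{i,1}$ intersected with the pullbacks of $\{h'>\eps\}$ (the paper's sets $W^k_{i,\eps}$). The identification of the key obstacle (that the unscaled $\phi'$ fails conditions \eqref{lem-cond-1b} and \eqref{lem-cond-1d}) and its cure match the paper's argument.
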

\begin{proof}
It follows Lemma \ref{division-space} that  there exist $n\in\mathbb N$ with $n>N$ 
and an order zero c.p.c.~map $$\phi': \mathrm{M}_{n^2}(\Comp) \to A$$ 
such that, with $$h':=\phi'(1)\quad\mathrm{and}\quad e'_{i} := \phi'(e_{i, i}),\quad i=1, 2, ..., n^2,$$
and 
$$b'_k:= e'_{n(k-1)+1} + \cdots + e'_{n(k-1)+n},\quad k=1, 2, ..., n,$$
so that 
$$e'_i \in\mathrm{C}(X),$$
and with 
$$E_i = (e_i')^{-1}((0, 1]),\quad i=1, 2, ..., n^2,$$
we have 
\begin{enumerate}

\item for each $k=1, 2, ..., n$, there are
mutually disjoint open sets $O^{k}_{0, 1}, ..., O^{k}_{0, M}$ and $O^{k}_{1, 1}, ..., O^{k}_{1, M}$  such that
         \begin{enumerate}
         \item $O^k_{0, m} \subseteq O_{0, m}\cap \bigsqcup_{i=4}^{n} E_{n(k-1)+i}$ and $O^k_{1, m} \subseteq O_{1, m}\cap \bigsqcup_{i=1}^{n} E_{n(k-1)+i}$, $m=1, 2, ..., M$,
         \item $O^k_{i, m} = O^k_{i, 1}\kappa_{i, m}$, $i=0, 1$, $m=1, 2, ..., M$, and 
         \item\label{lbd-division} $$ \lambda \mu(O^k_{0, 1}) > \frac{3}{n^2}\quad\mathrm{and}\quad \lambda \mu(O^k_{1, 1}) > \mu(X\setminus \bigsqcup_{i=1}^{n^2} E_i),\quad \mu\in\mathcal M_1(X, \Gamma),$$
         \end{enumerate}

\item\label{cond-ref-lem-3} $$b'_{k_1} \perp u_\gamma b'_{k_2} u^*_\gamma,\quad \gamma\in K,\ k_1\neq k_2,\ 1\leq k_1, k_2\leq n.$$

\end{enumerate}

Since $\mathcal M_1(X, \Gamma)$ is compact, $O^k_{0, 1}$ and $O^k_{1, 1}$ are open, and $X\setminus \bigsqcup_{i=1}^{n^2} E_i$ is closed, by Condition \eqref{lbd-division} and Lemma \ref{open-close-gap}, there are closed sets $D_{i, 1}^k \subseteq O_{i, 1}^k$, $i=0, 1$, and an open set $U\supseteq X\setminus \bigsqcup_{i=1}^{n^2} E_i  $ such that 
\begin{equation}\label{gap-02}
\lambda \mu(D^k_{0, 1}) > \frac{3}{n^2}  \quad \mathrm{and}\quad  \lambda \mu(D^k_{1, 1}) > \mu(U),\quad \mu\in\mathcal M_1(X, \Gamma).
\end{equation}

For any $\eps>0$, define \begin{equation}\label{contain-V}
V_\eps:=\mathrm{int}(f_\eps(h')^{-1}(\{1\})) =  \{x\in X: h'(x) > \eps \},
\end{equation}
and consider the open sets 
\begin{equation}\label{small-open} 
W^k_{i, \eps}:=(O^k_{i, 1}\cap V_\eps) \cap (O^k_{i, 2}\cap V_\eps) \kappa_{i, 2}^{-1} \cap\cdots\cap (O^k_{i, M}\cap V_\eps)\kappa_{i, M}^{-1},
\end{equation} 
which increase to $O_{i, 1}^k$ as $\eps\to 0$, $i=0, 1$.
Since $D_{i, 1}^k$ is compact, there is $\eps>0$ sufficiently small that
$$W_{i, \eps}^k \supseteq D_{i, 1}^k,\quad i=0, 1.$$ Pick such an $\eps$, and assume also, as we may, that 
$$U \supseteq \{x\in X: h'(x) < \eps \},$$ and note that then $$U\supseteq \{x\in X: h'(x) < \eps \} = (1-f_\eps(h'))^{-1}((0, 1]). $$
Hence, in view of \eqref{gap-02},
\begin{equation}\label{large-small-03-0}
\lambda\mu(W^k_{0, \eps}) > \lambda\mu(D^k_{0, 1}) > \frac{3}{n^2},\quad \mu\in\mathcal{M}_1(X, \Gamma),
\end{equation}
and
\begin{equation}\label{large-small-03-1}
\lambda\mu(W^k_{1, \eps}) > \lambda\mu(D^k_{1, 1}) >\mu(U)  \geq \mu((1-f_\eps(h'))^{-1}((0, 1])),\quad \mu\in\mathcal{M}_1(X, \Gamma),
\end{equation}
It follows from \eqref{small-open} that
\begin{equation}\label{set-contain} 
W^k_{i, \eps} \kappa_{i, m} \subseteq  V_\eps,\quad i=0, 1,\ m=1, 2, ..., M.
\end{equation}


%
%
%
%

Set $$\varphi_{W^k_{0, \eps}} = c_{k, 1}\quad\mathrm{and}\quad u_{\kappa_m}^* c_{k, 1}u_{\kappa_m} = c_{k, m},\quad m=2, 3, ..., M,$$
and
$$ \varphi_{W^k_{1, \eps}} = d_{k, 1} \quad\mathrm{and}\quad  u_{\kappa_m}^* c_{k, 1}u_{\kappa_m} = c_{k, m},\quad m=2, 3, ..., M.$$
Note that, by \eqref{small-open}, $$c_{k, m} \in \mathrm{Her}(O^k_{0, m})\quad\mathrm{and}\quad d_{k, m} \in \mathrm{Her}(O^k_{1, m}),\quad m=1, 2, ..., M.$$
It follows from \eqref{set-contain} and \eqref{contain-V} that 
$$c_{k, m} f_\eps(b_k') = c_{k, m} f_\eps(h') = c_{k, m}\quad\mathrm{and}\quad d_{k, m} f_\eps(b_k') = d_{k, m} f_\eps(h') = d_{k, m},$$
and it follows from \eqref{large-small-03-0} and \eqref{large-small-03-1} that for any $\tau\in\mathrm{T}(A)$,
\begin{equation*}
\lambda \mathrm{d}_\tau(c_{k, 1})  = \lambda \mu_\tau(W^k_{0, \eps}) > \frac{3}{n^2}
\end{equation*}
and
\begin{eqnarray*}
\lambda \mathrm{d}_\tau(d_{k, 1}) &= & \lambda \mu_\tau(W^k_{1, \eps}) > \mu_\tau((1-f_\eps(h'))^{-1}((0, 1])) = \mathrm{d}_\tau(1- f_\eps(h')).
\end{eqnarray*}
Then $$\phi:= f_\eps(\phi'): \mathrm{M}_{n^2}(\Comp) \to A$$ is the desired order zero map. 

Indeed, noting that $$h=\phi(1) = f_\eps(h'),$$ the existence of $c_{k, m}$, $d_{k, m}$, $k=1, ..., n$, $m=1, ..., M$, and Property \ref{lem-map-cond-1} 
are verified above.

Consider any $b_{k_1}, b_{k_2}$ with $k_1\neq k_2$, $1\leq k_1, k_2 \leq n$. Note that
\begin{equation*}
b_{k_1} = f_\eps(b'_{k_1}) \in \mathrm{Her}(b_{k_1}')\quad \mathrm{and}\quad u_\gamma b_{k_2}u^*_\gamma = f_\eps(u_\gamma b'_{k_2}u^*_\gamma) \in \mathrm{Her}(u_\gamma b_{k_2}'u^*_\gamma), \quad \gamma\in K,
\end{equation*}
and therefore, it follows from Condition \ref{cond-ref-lem-3} that $$b_{k_1} \perp u_\gamma b_{k_2}u^*_\gamma,\quad  \gamma\in K.$$ This verified Property \ref{lem-map-cond-2},
as desired.
\end{proof}

We are now ready for the main results of the paper.

\begin{prop}\label{main-prop}
Let $\Gamma$ be an infinite countable discrete amenable group, and let $(X, \Gamma)$ be a minimal free topological dynamical system with the (URP) and (COS). Then the crossed product C*-algebra $\mathrm{C}(X) \rtimes \Gamma$ has Property (D). 
\end{prop}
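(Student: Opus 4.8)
The plan is to verify Property~(D) directly, the key point being that the element produced by Proposition~\ref{alg-red} is already a $\mathcal{D}_0$-operator. Fix $a\in\mathrm{ZD}(A)$ and $\eps>0$. Any zero divisor is non-invertible, and $A=\mathrm{C}(X)\rtimes\Gamma$ is finite: amenability of $\Gamma$ gives both a faithful conditional expectation $\mathbb{E}\colon A\to\mathrm{C}(X)$ and an invariant probability measure, of full support by minimality, hence a faithful trace. Proposition~\ref{alg-red} then supplies unitaries $u_1,u_2$, an element $b\in\mathrm{C}_\mathrm{c}(\Gamma,\mathrm{C}(X))$, and a nonempty open set $E\subseteq X$ with $\norm{u_1au_2-b}<\eps$ and $\varphi_Eb=b\varphi_E=0$. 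It therefore suffices to show that $b$ is a $\mathcal{D}_0$-operator, with $\varphi_E$ playing the role of the annihilating positive element.

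First I would fix COS parameters $(\lambda_0,m_0)$ for $(X,\Gamma)$ and set $M=m_0$. Writing $b=\sum_{\gamma\in\Gamma_b}f_\gamma u_\gamma$ for a finite set $\Gamma_b$, put $K=\Gamma_b\cup\Gamma_b^{-1}$. Since $(X,\Gamma)$ is free, minimal and infinite (hence perfect), the return-time set $\{\gamma:x_0\gamma\in E\}$ of a point $x_0\in E$ is infinite, so I can select $2M$ mutually disjoint open translate-chains $O_{0,1},\dots,O_{0,M},O_{1,1},\dots,O_{1,M}\subseteq E$, each of the form $O_{i,m}=O_{i,1}\kappa_{i,m}$ with $\kappa_{i,1}=e$ (shrink a common base neighbourhood of $x_0$ so that all $2M$ translates are disjoint and lie in $E$). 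Applying Lemma~\ref{division-space-map} to $\lambda_0$, these sets, and $K$ produces an $n>3$, an order zero c.p.c.\ map $\phi\colon\mathrm{M}_{n^2}(\Comp)\to A$ with $e_i=\phi(e_{i,i})\in\mathrm{C}(X)$, the block sums $b_k$, and mutually orthogonal norm-one positive functions $c_{k,m},d_{k,m}$ (each of the form $\varphi_W$) obeying conditions \ref{lem-cond-1a}--\ref{lem-cond-1d} and the separation $b_{k_1}\perp u_\gamma b_{k_2}u_\gamma^*$ for $\gamma\in K$, $k_1\neq k_2$.

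To assemble the $\mathcal{D}_0$ data I take $p=q=n$, $r=3$, $l=1$, $h=\phi(1_{n^2})$, $E_k=b_k$, and $c_k=\sum_{m=1}^M c_{k,m}$, $d_k=\sum_{m=1}^M d_{k,m}$, each of norm one as an orthogonal sum of norm-one functions. The orthogonality \ref{lem-cond-1-orth} gives $c_k\perp s_k$, mutual orthogonality of the $c$'s and $d$'s gives $c_k\perp d_k$, condition \ref{lem-cond-1b} gives $c_kE_k=c_k$ and $d_kE_k=d_k$, and condition \ref{lem-cond-1a} together with $O_{i,m}\subseteq E$ gives $c_k,d_k\in\mathrm{Her}(E)=\overline{\varphi_EA\varphi_E}$. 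For Condition~\ref{defn-diag-perp-3b} it is enough that $b_{k_1}bb_{k_2}=0$ whenever $k_1\neq k_2$, so $l=1$ works: expanding $b$, commuting the function $b_{k_1}$ past each $f_\gamma$, and using $b_{k_1}\perp u_\gamma b_{k_2}u_\gamma^*$ for $\gamma\in\Gamma_b\subseteq K$ annihilates every term. Condition~\ref{defn-sizes} reads $3n>2n$.

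The hard part will be the remaining requirement \ref{defn-diag-perp-3g}, namely $s_k\precsim c_k$ and $1_A-h\precsim d_k$. The device is that $c_{k,1},\dots,c_{k,M}$ are pairwise Cuntz equivalent through the conjugacies $c_{k,m}=u_{\kappa_m}^*c_{k,1}u_{\kappa_m}$ of condition \ref{lem-cond-1c}, so the orthogonal sum $c_k$ is Cuntz equivalent to $\bigoplus_{m=1}^{m_0}c_{k,1}$, and likewise $d_k$ to $\bigoplus_{m=1}^{m_0}d_{k,1}$; this is exactly what allows a single norm-one element to absorb the multiplicity-$m_0$ output of $(\lambda_0,m_0)$-COS. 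Since the $e_i$ all share the same trace by construction (the order-zero diagonal is built from equal-cardinality Rokhlin pieces, cf.\ Lemma~\ref{division-tower}) and $\sum_i\mathrm{d}_\tau(e_i)=\mathrm{d}_\tau(h)\leq1$, one gets $\mathrm{d}_\tau(s_k)=3\,\mathrm{d}_\tau(e_1)\leq 3/n^2<\lambda_0\,\mathrm{d}_\tau(c_{k,1})$ by \ref{lem-cond-1d}, and similarly $\mathrm{d}_\tau(1_A-h)<\lambda_0\,\mathrm{d}_\tau(d_{k,1})$. Reading these as measure inequalities on open supports and invoking COS gives $s_k\precsim\bigoplus_{m=1}^{m_0}c_{k,1}$ and $1_A-h\precsim\bigoplus_{m=1}^{m_0}d_{k,1}$, hence $s_k\precsim c_k$ and $1_A-h\precsim d_k$. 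Thus $b$ is a $\mathcal{D}_0$-operator and $A$ has Property~(D). I expect the obstacle to be precisely this last paragraph: reconciling the single-element demand of the $\mathcal{D}_0$ definition with the multiplicity-$m_0$ comparison furnished by COS (resolved by realizing $c_k$ as an orthogonal sum of $m_0$ mutually conjugate functions), and pinning down the $3/n^2$ trace bound, which rests on the equal-trace feature of the diagonal implicit in the tower construction.
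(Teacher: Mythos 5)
Your proposal is correct and follows essentially the same route as the paper's proof: reduce via Proposition \ref{alg-red}, choose the $2M$ translate-chains of open sets inside $E$ using minimality, feed them together with the symmetric support set $K$ of $b$ into Lemma \ref{division-space-map}, and verify Definition \ref{defn-Prop-D0} with $p=q=n$, $r=3$, $l=1$, closing the Cuntz-comparison condition exactly as you describe (trace bound $\mathrm{d}_\tau(s_k)\leq 3/n^2$ from the equal-cardinality tiling pieces, COS to get $s_k\precsim\bigoplus_M c_{k,1}$, and mutual orthogonality plus unitary conjugacy of the $c_{k,m}$ to replace $\bigoplus_M c_{k,1}$ by $c_k$). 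No substantive differences from the paper's argument.
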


\begin{proof}
Let $a\in\mathrm{ZD}(A)$ and let $\eps>0$ be arbitrary. It follows from Proposition \ref{alg-red} that there are unitaries $u_1, u_2\in A$, $a'\in\mathrm{C}_{\mathrm c}(\Gamma, \mathrm{C}(X))$ and a non-empty open set $E\subseteq X$ such that $$\norm{u_1au_2 - a'} < \eps \quad\mathrm{and}\quad \varphi_Ea' = a'\varphi_E = 0.$$ In the following, let us verify that $a'$ is actually a $\mathcal D_0$-element. Since $\eps$ is arbitary, this shows that $A$ has Property (D).


Note that, since $(X, \Gamma)$ has the (COS), the sub-C*-algebra $\mathrm{C}(X)$ has the $(\lambda, M)$-Cuntz comparison insider $A$ for some $\lambda\in(0, +\infty)$ and $M\in\mathbb N$. Fix $\lambda$ and $M$.

Write 
\begin{equation}\label{alg-element}
a'=\sum_{\gamma\in K} f_\gamma u_\gamma,
\end{equation} 
where $f_\gamma\in\mathrm{C}(X)$ and $K\subseteq \Gamma$ is a symmetric finite set.

Consider the open set $E$. Since $(X, \Gamma)$ is minimal, all orbits are dense, and hence there exist non-empty mutually orthogonal open sets $$O_{0, 1}, ..., O_{0, M}, O_{1, 1}, ..., O_{1, M}\subseteq E$$ and 
\begin{equation}\label{smaller-zero-sets}
\{\kappa_{0,1}(=e), \kappa_{0, 2}, ..., \kappa_{0, M}, \kappa_{1,1}(=e), \kappa_{1, 2}, ..., \kappa_{1, M}\}\subseteq\Gamma
\end{equation} 
such that $$O_{i, m}=O_{i, 1}\kappa_{i, m},\quad i=0, 1, \ m=1, ..., M.$$

Since $(X, \Gamma)$ has the (URP), it follows from Lemma \ref{division-space-map} that there is $n>3$ and an order zero c.p.c.~map $$\phi: \mathrm{M}_{n^2}(\Comp) \to A$$ 
such that if $$h:=\phi(1),\quad e_{i} := \phi(e_{i, i}),\quad i=1, 2, ..., n^2,$$
$$s_k:=e_{(k-1)p+1} + \cdots + e_{(k-1)p+3}, \quad k=1, ..., n,$$
and 
$$E_k:= e_{n(k-1)+1} + \cdots + e_{n(k-1)+n},\quad k=1, 2, ..., n,$$
then 
\begin{equation}\label{feet-in-X}
e_i \in\mathrm{C}(X)
\end{equation}
and
\begin{enumerate}

\item\label{2-lem-cond-1} for each $k=1, 2, ..., n$, there are mutually orthogonal positive functions $$c_{k, 1}, ..., c_{k, M}, d_{k, 1}, ..., d_{k, M} \in\mathrm{C}(X)$$ such that 

\begin{enumerate}
          \item\label{2-lem-cond-1a} $c_{k, m} \in\mathrm{Her}(O_{0, m})$ and $d_{k, m} \in\mathrm{Her}(O_{1, m})$, $m=1, 2, ..., M$,
          \item\label{2-lem-cond-1e} $c_{k, m} \perp s_k$, $m=1, 2, ..., M$,
          \item\label{2-lem-cond-1b} $c_{k, m}E_k = c_{k, m}$ and $d_{k, m}E_k = d_{k, m}$, $m =1, 2, ..., M,$
          \item\label{2-lem-cond-1c} $c_{k, m} = u^*_{\kappa_m} c_{k, 1}u_{\kappa_m}$ and $d_{k, m} = u^*_{\kappa_m} d_{k, 1}u_{\kappa_m}$, $m=1, 2, ..., M,$ and 
          \item\label{2-lem-cond-1d} $\lambda \mathrm{d}_\tau(c_{k, 1})> \frac{3}{n^2}$ and $\lambda \mathrm{d}_\tau(d_{k, 1})> \mathrm{d}_\tau(1-h)$, $\tau\in\mathrm{T}(A),$
\end{enumerate}

\item\label{2-lem-cond-2}  $$E_{k_1} \perp u_\gamma E_{k_2} u^*_\gamma,\quad \gamma\in K,\  k_1\neq k_2,\ 1\leq k_1, k_2\leq n,$$ where $u_\gamma\in A$ is the canonical unitary of $\gamma$.
\end{enumerate}

Let us verify that the order zero map $\phi$ satisfies Definition \ref{defn-Prop-D0} with $p=q=n$, $l=1$, and $r = 3$.  (With the given $p, q, l, r$ and the property $n > 3$, it is straightforward to verify that (\ref{defn-sizes}) of Definition \ref{defn-Prop-D0} holds.)

Note that, by Equations \eqref{alg-element}, \eqref{feet-in-X}, and Condition (\ref{2-lem-cond-2}), for any $k_1 \neq k_2$, $1\leq k_1, k_2 \leq n$,
\begin{eqnarray*}
E_{k_1} a' E_{k_2} & = & E_{k_1} (\sum_{\gamma\in K} f_\gamma u_\gamma) E_{k_2}  =  \sum_{\gamma\in K} E_{k_1}f_\gamma u_\gamma E_{k_2} \\
& = & \sum_{\gamma\in K} f_\gamma E_{k_1} u_\gamma E_{k_2} \\
& = & \sum_{\gamma\in K} f_\gamma (E_{k_1} u_\gamma E_{k_2}u^*_\gamma )u_\gamma = 0.
\end{eqnarray*}
In particular, this verifies (\ref{defn-diag-perp-3b}) of Definition \ref{defn-Prop-D0}. 

Set $$c_k= c_{k, 1} + \cdots + c_{k, M} \quad\mathrm{and}\quad d_k= d_{k, 1} + \cdots + d_{k, M},\quad k=1, ..., n.$$ Then, (\ref{defn-D-in-zero}), (\ref{defn-diag-perp-3e}), and (\ref{defn-diag-perp-3f}) of Definition \ref{defn-Prop-D0} follow directly from Conditions (\ref{2-lem-cond-1a}), (\ref{2-lem-cond-1e}), and (\ref{2-lem-cond-1b}) above.

As for (\ref{defn-diag-perp-3g}) of Definition \ref{defn-Prop-D0}, note that it follows from Condition (\ref{2-lem-cond-1d}) above that
$$\mathrm{d}_\tau(s_k) \leq \frac{3}{n^2} < \lambda\mathrm{d}_\tau(c_{k, 1})\quad\mathrm{and}\quad \mathrm{d}_\tau(1-h) < \lambda \mathrm{d}_\tau(d_{k, 1}),\quad \tau\in \mathrm{T}(A).$$
Since $(X, \Gamma)$ has $(\lambda, M)$-Cuntz comparison of open sets and $c_{k, 1}, d_{k, 1}, h, s_k \in\mathrm{C}(X)$, one has
$$s_k \precsim \underbrace{c_{k, 1}\oplus\cdots\oplus c_{k, 1}}_M \quad\mathrm{and}\quad 1-h \precsim \underbrace{d_{k, 1}\oplus\cdots\oplus d_{k, 1}}_M.$$
By Condition (\ref{2-lem-cond-1c}) above, the positive elements $c_{k, m}$, $m=1, ..., M$, are mutually orthogonal and mutually Cuntz equivalent, and the positive elements $d_{k, m}$, $m=1, ..., M$ are mutually orthogonal and mutually Cuntz equivalent. One then has $$c_k\sim \underbrace{c_{k, 1}\oplus\cdots\oplus c_{k, 1}}_M\quad\mathrm{and}\quad d_k\sim \underbrace{d_{k, 1}\oplus\cdots\oplus d_{k, 1}}_M,$$ and hence $$s_k \precsim c_k\quad\mathrm{and}\quad 1-h \precsim d_k.$$
This shows that $a'$ is a $\mathcal D_0$-element, as asserted.
\end{proof}

\begin{thm}\label{main-thm}
Let $\Gamma$ be a countable discrete amenable group, and let $(X, \Gamma)$ be a free and minimal topological dynamical system with the (URP) and (COS). Then $\mathrm{tsr}(\mathrm{C}(X) \rtimes \Gamma)= 1$.
\end{thm}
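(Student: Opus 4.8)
The plan is to derive the theorem by assembling the two principal results already in hand, namely Proposition \ref{main-prop} (the crossed product has Property (D)) and Theorem \ref{thm-ab-tsr1} (Property (D) together with finiteness forces stable rank one), after first disposing of a degenerate case and verifying that $A := \mathrm{C}(X)\rtimes\Gamma$ is finite.

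First I would establish finiteness of $A$. Since $\Gamma$ is amenable, the invariant-measure simplex $\mathcal{M}_1(X,\Gamma)$ is nonempty, so fix any $\mu\in\mathcal{M}_1(X,\Gamma)$. Because the action is minimal, the support of $\mu$ is a nonempty closed invariant subset of $X$ and hence equals $X$, so the trace $\tau_\mu$ on $\mathrm{C}(X)$ is faithful. Composing with the unique conditional expectation $\mathbb{E}\colon A\to\mathrm{C}(X)$, which exists and is faithful precisely because $\Gamma$ is amenable (as recalled at the beginning of Section 5), yields a faithful tracial state $\tau := \tau_\mu\circ\mathbb{E}$ on $A$. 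A unital C*-algebra carrying a faithful tracial state is stably finite, so in particular $A$ is finite.

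Next I would split on the cardinality of $\Gamma$. If $\Gamma$ is finite, then minimality forces the single orbit $x\Gamma$, which is finite and therefore closed, to be all of $X$, and freeness gives $\abs{X}=\abs{\Gamma}$; thus $A\cong\mathrm{M}_{\abs{\Gamma}}(\Comp)$ is finite-dimensional and trivially has stable rank one. If $\Gamma$ is infinite, then the hypotheses of Proposition \ref{main-prop} are met verbatim (free, minimal, infinite, with the (URP) and (COS)), and that proposition yields that $A$ has Property (D).

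Finally, with $A$ finite and possessing Property (D), Theorem \ref{thm-ab-tsr1} gives $A=\overline{\mathrm{GL}(A)}$, i.e.\ $\mathrm{tsr}(A)=1$, completing the proof. I do not anticipate any genuine obstacle at this assembly stage: all the analytic weight has already been carried by Proposition \ref{main-prop}, where the (URP) and (COS) are used to manufacture the order-zero tower maps that exhibit the reduced element as a $\mathcal{D}_0$-operator, and by the invertibility arguments of Sections 3 and 4. The only points requiring any care are the verification of finiteness through a faithful trace and the separate, elementary treatment of finite $\Gamma$, both of which are routine.
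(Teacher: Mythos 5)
Your proposal is correct and follows essentially the same route as the paper: split on whether $\Gamma$ is finite (in which case $X$ is finite and $A\cong\mathrm{M}_{\abs{\Gamma}}(\Comp)$), and otherwise combine Proposition \ref{main-prop} with Theorem \ref{thm-ab-tsr1}. The only difference is that you explicitly justify finiteness of $A$ via the faithful trace $\tau_\mu\circ\mathbb{E}$, a standard fact the paper takes for granted.
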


\begin{proof}
If $\abs{\Gamma} < \infty$, since $(X, \Gamma)$ is minimal,  the space $X$ must consist of finitely many points and $\mathrm{C}(X) \rtimes \Gamma\cong\M{\abs{\Gamma}}{\Comp}$. In particular, it has stable rank one. 

If $\abs{\Gamma} =\infty$, then it follows from Proposition \ref{main-prop} that $\mathrm{C}(X) \rtimes \Gamma$ has Property (D). Since $\mathrm{C}(X) \rtimes \Gamma$ is finite, it follows from Theorem \ref{thm-ab-tsr1} that $\mathrm{tsr}(\mathrm{C}(X) \rtimes \Gamma) = 1$.
\end{proof}


\begin{cor}\label{sr-Z}
Let $(X, \Int^d)$ be a free and minimal topological dynamical system. Then $\mathrm{tsr}(\mathrm{C}(X) \rtimes \Int^d) = 1.$
\end{cor}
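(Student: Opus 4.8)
The plan is to deduce this corollary directly from the main theorem by verifying its hypotheses. First I would observe that $\Int^d$ is a countable discrete abelian group, hence in particular amenable, so the standing hypothesis on $\Gamma$ in Theorem \ref{main-thm} is satisfied with $\Gamma = \Int^d$.

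Next, the only remaining thing to check is that the dynamical properties (URP) and (COS) hold. This is exactly the content of the theorem recorded earlier in the excerpt (following from Theorem 4.2 and Theorem 5.5 of \cite{Niu-MD-Zd}): any free and minimal system $(X, \Int^d)$ has both the (URP) and (COS). Since $(X, \Int^d)$ is assumed free and minimal, I would simply invoke that result to supply these two properties.

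With the amenability of $\Int^d$ and the (URP) and (COS) in hand, Theorem \ref{main-thm} applies verbatim and yields $\mathrm{tsr}(\mathrm{C}(X)\rtimes\Int^d) = 1$, completing the argument. I do not anticipate any genuine obstacle here: the corollary is an immediate specialization of the main theorem, and all the analytic difficulty has already been absorbed into the proof of Theorem \ref{main-thm} (via Property (D) and Proposition \ref{main-prop}) and into the cited verification of (URP) and (COS) for $\Int^d$-actions. In short, the entire proof amounts to checking hypotheses and citing the two relevant results, so the write-up should be only a sentence or two.
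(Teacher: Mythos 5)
Your proposal is correct and matches the paper's own proof exactly: the paper likewise cites Theorems 4.2 and 5.5 of \cite{Niu-MD-Zd} for the (URP) and (COS) of free minimal $\Int^d$-actions and then applies Theorem \ref{main-thm}. Your additional remark that $\Int^d$ is amenable (so the hypotheses of Theorem \ref{main-thm} are met) is a correct, if implicit, detail that the paper leaves unstated.
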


\begin{proof}
By Theorem 4.2 and Theorem 5.5 of \cite{Niu-MD-Zd}, any free and minimal dynamical system $(X, \Int^d)$ has the (URP) and (COS). It then follows from Theorem \ref{main-thm} that $\mathrm{tsr}(\mathrm{C}(X) \rtimes \Int^d) = 1.$
\end{proof}

\begin{rem}
Without simplicity, the C*-algebra $\mathrm{C}(X) \rtimes\Gamma$ might not have stable rank one in general, even if $X$ is the Cantor set, $\Gamma=\Int$, and $(X, \Int)$ has finitely many minimal closed invariant subsets (see \cite{Poon-PAMS-89} or \cite{BNS-Cantor}).
\end{rem}

\begin{cor}\label{cor-cancellation}
Let $(X, \Int^d)$ be a free and minimal dynamical system, and set $A=\mathrm{C}(X) \rtimes\Int^d$. Then 
\begin{enumerate}
\item\label{can-proj} $A$ has cancellation of projections:  if $p, q\in A \otimes \mathcal K$ are two projections such that $p\oplus r \sim q\oplus r$ for some projections $r\in A\otimes\mathcal K$, then $p\sim q$.
\item\label{can-cuntz} $A$ has weak cancellation in the Cuntz semigroup: if $a, b, c$ are elements of the Cuntz semigroup of $A$ with $a+c \ll b+c$, then $a \ll b$. 
\item\label{K1-U1} The canonical map $\mathrm{U}(A)/\mathrm{U}_0(A) \to \Kone(A)$ is an isomorphism. That is, any unitary of $A \otimes \mathcal K$ is homotopic to a unitary of $A$, and if a unitary $u$ of $A$ is connected to the identity with a path of unitaries of $\widetilde{A\otimes K}$, then $u$ can be connected to the identity by a path of unitaries of $A$.
\end{enumerate}
\end{cor}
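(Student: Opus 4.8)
The plan is to derive all three items from the single input that $A = \mathrm{C}(X) \rtimes \Int^d$ has stable rank one, which is exactly the content of Corollary \ref{sr-Z}. No further dynamics enter: each assertion is a general structural consequence of stable rank one, so the work reduces to recalling (and, for the second item, carefully citing) the relevant abstract theorems.

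For the cancellation of projections, I would use the classical fact that a C*-algebra of stable rank one has cancellation, i.e.\ the monoid $\mathrm{V}(A \otimes \mathcal K)$ of Murray--von Neumann equivalence classes of projections is cancellative. Applying this to the hypothesis $p \oplus r \sim q \oplus r$ yields $p \sim q$ at once. This implication goes back to Rieffel's foundational paper \cite{Rieffel-DimStr} (see also \cite{RorUHF} for the finite-algebra picture).

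For the identification $\mathrm{U}(A)/\mathrm{U}_0(A) \cong \Kone(A)$, I would recall, as already noted in the Introduction, that stable rank one forces each canonical map $\mathrm{U}(\mathrm{M}_k(A))/\mathrm{U}_0(\mathrm{M}_k(A)) \to \mathrm{U}(\mathrm{M}_{k+1}(A))/\mathrm{U}_0(\mathrm{M}_{k+1}(A))$ to be an isomorphism, whence the canonical map $\mathrm{U}(A)/\mathrm{U}_0(A) \to \Kone(A)$ is an isomorphism as well; this is again due to Rieffel \cite{Rieffel-DimStr}. The two concrete reformulations are precisely the surjectivity (every unitary of $A \otimes \mathcal K$ is homotopic to one coming from $A$) and the injectivity (a unitary of $A$ that is null-homotopic in $\widetilde{A \otimes \mathcal K}$ is already null-homotopic through unitaries of $A$) of this isomorphism.

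The one item requiring a more recent tool, and hence the main obstacle, is the cancellation in the Cuntz semigroup. Here I would appeal to the structure theory of the Cuntz semigroup of a stable-rank-one C*-algebra developed by Antoine, Perera, Robert and Thiel, which shows that for such algebras $\mathrm{Cu}(A)$ is cancellative in exactly the stated sense: the inequality $x + [c] \le y + [(c-\eps)_+]$ permits one to cancel the ``soft'' gap between $[c]$ and $[(c-\eps)_+]$ and conclude $x \le y$. The remaining care is purely bookkeeping: passing from $\mathrm{W}(A)$ to $\mathrm{Cu}(A) = \mathrm{Cu}(A \otimes \mathcal K)$ and checking that the hypotheses of the cited cancellation theorem are met verbatim. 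Unlike the first and third items, which are essentially folklore, this step genuinely relies on the comparatively recent fact that stable rank one alone yields cancellation of the Cuntz semigroup.
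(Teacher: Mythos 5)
Your proposal is correct and follows essentially the same route as the paper: all three items are deduced from $\mathrm{tsr}(A)=1$ (Corollary \ref{sr-Z}), with (1) and (3) being Rieffel's classical consequences of stable rank one and (2) being the Cuntz-semigroup cancellation theorem for stable-rank-one algebras. The only cosmetic difference is the reference for item (2): the paper cites Theorem 4.3 of R{\o}rdam--Winter (with an earlier version due to Elliott), whereas you cite the later Antoine--Perera--Robert--Thiel structure theory, which yields the same cancellation statement.
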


\begin{proof}
Statements \ref{can-proj} and \ref{K1-U1} are well known facts for C*-algebras with stable rank one (\cite{Rieffel-DimStr}). Statement \ref{can-cuntz} follows from  Theorem 4.3 of \cite{RW-Z}. 

(An earlier, equivalent, version of the weak cancellation in the Cuntz semigroup was obtained in \cite{Elliott-sr1}. For the
convenience of the reader, let us provide a proof that the cancelation result in \cite{Elliott-sr1} is equivalent to that of \cite{RW-Z}. Recall that following three statements: 
\begin{enumerate}
\item\label{wc-v1} If 
$a, b\in\mathrm{W}(A)$ are such that $a+[c] \leq b+[(c-\eps)_+]$ for some positive element $c\in\mathrm{M}_\infty(A)$, then $a\leq b$. (\cite{RW-Z})
\item\label{wc-v2} If 
$a, b, c\in\mathrm{Cu}(A)$ are such that $a+c \ll b+c$, then $a \ll b$. (\cite{Elliott-sr1})
\item\label{wc-v3} If 
$a, b, c, d\in\mathrm{Cu}(A)$ are such that $a+c \leq d \ll d \leq b+c$, then $a \ll b$. (\cite{Elliott-sr1}) 
\end{enumerate}
Let us show that these three statements are all equivalent.

It is clear that $(\ref{wc-v2}) \Rightarrow (\ref{wc-v3})$. 

As for $(\ref{wc-v1}) \Rightarrow (\ref{wc-v2})$, let $a, b, c \in (A\otimes\mathcal K)^+$ with $$[a] + [c] \ll [b] + [c].$$ Pick $\eps>0$ such that
$$[a] +[c] \ll [(b-\eps)_+] + [(c-\eps)_+],$$
and then pick an arbitrary $\eps'\in (0, \eps)$ so we have
$$[(a-\eps')_+] +[(c-\eps')_+] \ll [(b-\eps)_+] + [(c-\eps)_+].$$
Note that there are $a', b', c' \in (\mathrm{M}_\infty(A))^+$ such that $$[a'] = [(a-\eps')_+],\quad [b'] = [(b-\eps)_+]\quad\mathrm{and}\quad [c'] = [(c-\eps')_+].$$
Hence
$$[a'] +[c'] \ll [b'] + [(c'-(\eps-\eps'))_+],$$
and by (\ref{wc-v1}), $$[(a-\eps')_+] = [a'] \leq [b']=[(b-\eps)_+].$$
Since $\eps'$ is arbitrary, $$[a] \leq [(b-\eps)_+]\ll [b].$$ This proves (\ref{wc-v2}).

Let us show $(\ref{wc-v3}) \Rightarrow (\ref{wc-v1})$. By adjoining a unit, let us assume that $A$ is unital. Let $a, b, c\in (\mathrm{M}_\infty(A))^+$  such that $$[a]+[c] \leq [b]+[(c-\eps)_+]$$ for some $\eps>0$. Let $\eps' \in (0, \eps)$ be arbitrary, and hence $$[a]+[c] \leq [b]+[(c-\eps')_+].$$ 
Let $h_{\eps'}:[0, 1] \to [0, 1]$ be a continuous function which is nonzero on $[0, \eps')$ and zero everywhere else (so $h_{\eps'}(a) \perp (a-\eps')_+$).
Then
\begin{eqnarray*}
&& (a-\eps')_+\oplus (c-\eps')_+ \oplus h_{\eps'}(a) \oplus h_{\eps'}(c)\\
& \approx& ((a-\eps')_+ + h_{\eps'}(a)) \oplus ((c-\eps')_+ + h_{\eps'}(c))  \\\
&\precsim& 1_2 \\
&\approx&( a + h_{\eps'}(a) ) \oplus ( c + h_{\eps'}(c) ) \\
& \precsim & a \oplus c \oplus h_{\eps'}(a) \oplus h_{\eps'}(c)  \\
& \precsim & b \oplus  (c-\eps')_+ \oplus h_{\eps'}(a) \oplus h_{\eps'}(c) .
\end{eqnarray*}
By (\ref{wc-v3}), $$(a-\eps')_+\ll b.$$
Since $\eps'$ is arbitrary, we have $a\precsim b$. This proves (\ref{wc-v1}).)
\end{proof}

By Theorem 4.1 of \cite{CE-str1}, the Cuntz semigroup classifies homomorphisms from an inductive limit of interval algebras (AI algebra) to a C*-algebra $A$ with stable rank one. Therefore we have the following corollary.
\begin{cor}\label{cor-AI}
Let $(X, \Int^d)$ be a free and minimal dynamical system. Let $\phi_1, \phi_2: I \to A =\mathrm{C}(X)\rtimes\Int^d$ be two homomorphisms, where $I$ is an AI algebra. Then $\phi_1$ and $\phi_2$ are approximately unitarily equivalent if, and only if, $[\phi_1] = [\phi_2]$ at the level of the Cuntz semigroups.
\end{cor}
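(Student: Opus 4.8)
The plan is to deduce this corollary directly, with no new dynamical or C*-algebraic construction: it is a two-step invocation of results already in hand. The two ingredients are (i) the stable-rank-one property of the target established earlier in the paper, and (ii) the Cuntz-semigroup classification of homomorphisms out of AI algebras into stable-rank-one C*-algebras due to Ciuperca and Elliott (Theorem 4.1 of \cite{CE-str1}).

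First I would record that the target algebra $A=\mathrm{C}(X)\rtimes\Int^d$ has stable rank one. Since $(X,\Int^d)$ is assumed free and minimal, this is exactly the content of Corollary \ref{sr-Z} (which itself rests on Theorem \ref{main-thm}, together with the fact that every free and minimal $\Int^d$-action has the (URP) and (COS)). Thus $A$ satisfies the single structural hypothesis imposed on the target in the cited classification theorem. Next, the forward implication is immediate and completely general: if $\phi_1$ and $\phi_2$ are approximately unitarily equivalent, then for each positive element $x$ in a matrix amplification of $I$ the elements $\phi_1(x)$ and $\phi_2(x)$ are Cuntz equivalent, because conjugation by a unitary preserves Cuntz equivalence and Cuntz subequivalence is stable under norm limits; hence the induced maps on the Cuntz semigroup coincide, i.e. $[\phi_1]=[\phi_2]$. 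For the converse, I would apply Theorem 4.1 of \cite{CE-str1} with $I$ as source and $A$ as target: since $A$ has stable rank one by the first step, that theorem asserts precisely that two homomorphisms from an AI algebra which induce the same map on the Cuntz semigroup are approximately unitarily equivalent. This yields the corollary.

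There is essentially no obstacle here, since the genuine analytic work has already been absorbed into Theorem \ref{main-thm} and Corollary \ref{sr-Z}; the Cuntz-semigroup classification of maps from AI algebras into stable-rank-one C*-algebras is then applied as a black box. The only point requiring care, rather than a true difficulty, is to confirm that the invariant ``$[\phi_1]=[\phi_2]$ on the Cuntz semigroups'' is literally the invariant appearing in \cite{CE-str1}, namely equality of the induced maps on the (completed) Cuntz semigroup $\mathrm{Cu}$.
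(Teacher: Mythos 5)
Your proposal is correct and follows exactly the paper's route: the paper likewise derives the corollary by combining the stable rank one conclusion of Corollary \ref{sr-Z} with Theorem 4.1 of \cite{CE-str1}, which classifies homomorphisms from AI algebras into stable-rank-one C*-algebras by their induced maps on Cuntz semigroups. Your additional remarks on the trivial forward implication and on matching the invariant with that of \cite{CE-str1} are sensible but not needed beyond what the paper states.
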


The next corollary follows from \cite{Thiel-sr1}:
\begin{cor}\label{Cu-surj}
Let $(X, \Gamma)$ be a free and minimal dynamical system with the (URP) and (COS). Then for every $f\in\mathrm{LAff}(\mathrm{T}(A))_{++}$, where $A=\mathrm{C}(X)\rtimes\Gamma$, there exists $a\in (A\otimes\mathcal K)^+$ such that $$\mathrm{d}_\tau(a) = f(\tau),\quad \tau\in\mathrm{T}(A).$$

Moreover, if $A$ has strict comparison of positive elements, then the Cuntz semigroup of $A$ is almost divisible (see \cite{Thiel-sr1}). In this case, there are canonical order-isomorphisms 
$$\mathrm{Cu}(A)\cong V(A)\sqcup\mathrm{LAff}(\mathrm{T}(A))_{++} \cong\mathrm{Cu(A\otimes\mathcal Z)}.$$

In particular, the statements above hold for $\Gamma=\Int^d$.
\end{cor}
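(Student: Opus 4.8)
The plan is to derive the entire corollary from Thiel's analysis of the Cuntz semigroup of C*-algebras with stable rank one \cite{Thiel-sr1}, whose hypotheses are supplied by Theorem \ref{main-thm}. First I would record the relevant structural properties of $A = \mathrm{C}(X)\rtimes\Gamma$: it is unital and separable (as $X$ is metrizable and $\Gamma$ is countable), nuclear (as $\Gamma$ is amenable), and simple (by minimality, as noted in the preliminaries); moreover it is stably finite and has stable rank one by Theorem \ref{main-thm}. When $\Gamma$ is infinite the space $X$ is infinite (by freeness and minimality), so $A$ is non-elementary; the case of finite $\Gamma$ is trivial, since then $A$ is a matrix algebra. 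Thus $A$ falls within the scope of \cite{Thiel-sr1}.

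For the first assertion, I would invoke the rank-realization theorem of \cite{Thiel-sr1}: for a simple, separable, unital, non-elementary, stably finite C*-algebra of stable rank one, every strictly positive lower semicontinuous affine function on the tracial simplex is the rank function of a positive element of $A\otimes\mathcal K$. Concretely, given $f\in\mathrm{LAff}(\mathrm{T}(A))_{++}$ this produces $a\in(A\otimes\mathcal K)^+$ with $\mathrm{d}_\tau(a)=f(\tau)$ for every $\tau\in\mathrm{T}(A)$, which is exactly the claim.

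For the ``moreover'' part, assuming in addition that $A$ has strict comparison of positive elements, I would again quote \cite{Thiel-sr1}, where it is shown that stable rank one together with strict comparison forces the Cuntz semigroup of $A$ to be almost divisible. Combining stable rank one (which yields cancellation, cf.\ Corollary \ref{cor-cancellation}), strict comparison, and almost divisibility, the Cuntz semigroup splits into its compact (projection) part and its soft part, giving the order-isomorphism $\mathrm{Cu}(A)\cong V(A)\sqcup\mathrm{LAff}(\mathrm{T}(A))_{++}$; the surjectivity established in the first part guarantees that the soft summand is all of $\mathrm{LAff}(\mathrm{T}(A))_{++}$. The identification $\mathrm{Cu}(A)\cong\mathrm{Cu}(A\otimes\mathcal Z)$ then follows because $A\otimes\mathcal Z$ admits the same description in terms of the invariants $V(A\otimes\mathcal Z)=V(A)$ and $\mathrm{T}(A\otimes\mathcal Z)=\mathrm{T}(A)$. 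Finally, for $\Gamma=\Int^d$ the required (URP) and (COS) hold by Theorem 4.2 and Theorem 5.5 of \cite{Niu-MD-Zd}, so all statements apply.

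The main obstacle: there is essentially no new analytic content in this corollary, as the whole difficulty is concentrated in establishing stable rank one, namely Theorem \ref{main-thm}. What requires care is purely bookkeeping, verifying that each hypothesis of the theorems of \cite{Thiel-sr1} is met (in particular non-elementariness), and tracking the precise form of the Cuntz-semigroup decomposition together with the role of $\mathcal Z$-stability via strict comparison in the final chain of isomorphisms.
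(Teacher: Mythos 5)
Your proposal is correct and follows essentially the same route as the paper, whose entire proof is a citation of Theorem 8.11 and Corollary 8.12 of \cite{Thiel-sr1} applied to $A$ once Theorem \ref{main-thm} supplies stable rank one; your additional bookkeeping (separability, simplicity, nuclearity, non-elementariness) just makes explicit the hypotheses the paper leaves implicit. (The only quibble is your remark that the finite-$\Gamma$ case is ``trivial'': for a matrix algebra the rank-realization statement actually fails, so the corollary should be read as concerning the non-elementary case, which is also how the paper implicitly treats it.)
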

\begin{proof}
This follows directly from Theorem 8.11 and Corollary 8.12 of \cite{Thiel-sr1}.
\end{proof}

%
%
%
%


In fact, if $\mathrm{C}(X)\rtimes\Int^d$ has strict comparison of positive elements, then it actually is Jiang-Su stable:
\begin{cor}\label{cor-Z}
Let $(X, \Gamma)$ be a free and minimal dynamical system with the (URP), and denote by  $A=\mathrm{C}(X)\rtimes\Gamma$. Then $A \cong A\otimes\mathcal Z$ if, and only if, $A$ has strict comparison of positive elements. (In other words, $A$  satisfies the Toms-Winter conjecture). In particular, the statement holds for $\Gamma=\Int^d$.
\end{cor}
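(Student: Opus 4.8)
The plan is to split the equivalence into the two implications of the Toms--Winter dichotomy and to supply the nontrivial direction from the dynamical regularity already exploited in this paper. I would first record the ambient structure of $A=\mathrm{C}(X)\rtimes\Gamma$: since $\Gamma$ is amenable and the action is free and minimal, $A$ is simple, separable, unital and nuclear, and (as $\Gamma$ is infinite) infinite-dimensional; it is stably finite because its tracial states correspond, via the canonical conditional expectation, to the invariant measures $\mathcal M_1(X,\Gamma)$, and by Theorem \ref{main-thm} it even has stable rank one. The trace simplex $\mathrm T(A)\cong\mathcal M_1(X,\Gamma)$ may be an arbitrary metrizable Choquet simplex, so the argument cannot be reduced to the finitely-many-extreme-traces case and must treat the full trace space.

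The forward implication is standard: if $A\cong A\otimes\mathcal Z$, then, being simple, unital and exact, $A$ has almost unperforated Cuntz semigroup and hence strict comparison of positive elements (R\o rdam, \cite{Ror-Z-stable}), with no dynamical input needed. For the reverse implication, strict comparison $\Rightarrow$ $\mathcal Z$-stability, the plan is to feed strict comparison into the Toms--Winter machinery for stably finite nuclear C*-algebras, which produces $\mathcal Z$-stability once one knows, in addition, a regularity property of the central sequence algebra---uniform property $\Gamma$, equivalently the availability of complemented partitions of unity (Castillejos--Evington--Tikuisis--White--Winter). The point is that this extra regularity is exactly what the (URP) provides: the Rokhlin towers with $(\mathcal F,\eps)$-invariant (F\o lner) shapes, which already drive the order-zero maps of Lemmas \ref{division-tower}--\ref{division-space-map}, give, for any tolerance, families of mutually orthogonal, approximately central positive elements whose tracial values are asymptotically uniform over all of $\mathrm T(A)$; matching their Cuntz classes by means of (COS) together with strict comparison then yields the approximately central, trace-balanced decompositions of the unit required for uniform property $\Gamma$.

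I expect the crux to be precisely this middle step: verifying uniform property $\Gamma$ (or, directly, complemented partitions of unity) for $A$ uniformly across the possibly infinite-dimensional simplex $\mathrm T(A)$, starting from the tower decompositions of $\Gamma$ supplied by the (URP). The order-zero maps $\phi\colon\mathrm M_{n^2}(\Comp)\to A$ constructed above, whose ranges carry controlled mass under every invariant measure, are the natural raw material; the work is to reorganize them into the two-sided almost-central structure demanded by property $\Gamma$ and to control the estimates simplex-wide rather than at a single trace. With that regularity established, strict comparison upgrades to $\mathcal Z$-stability by the cited theorem, completing the nontrivial direction. Finally, the statement for $\Gamma=\Int^d$ is immediate, since by \cite{Niu-MD-Zd} every free minimal $\Int^d$-action has the (URP) and (COS).
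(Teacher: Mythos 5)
Your reduction of the statement to the single implication ``strict comparison $\Rightarrow$ $\mathcal Z$-stability'' and your handling of the easy direction via \cite{Ror-Z-stable} and of the $\Int^d$ case via \cite{Niu-MD-Zd} all match the paper. The problem is the step you yourself flag as the crux: you never establish uniform property $\Gamma$, and the mechanism you sketch for it would not deliver it. Uniform property $\Gamma$ requires orthogonal projections $p_1,\dots,p_n$ in the uniform tracial central sequence algebra, summing to $1$, with $\tau(ap_i)=\frac{1}{n}\tau(a)$ for \emph{every} $a\in A$ and every limit trace $\tau$. Cutting the (URP) towers into $n$ congruent F{\o}lner pieces does produce positive elements of $\mathrm{C}(X)$ that are approximately central in the uniform trace $2$-norm and have uniform trace close to $\frac{1}{n}$; but matching their Cuntz classes using (COS) and strict comparison only controls $\mathrm{d}_\tau(p_i)$, i.e.\ the value $\tau(p_i)$, not the relative traces $\tau(ap_i)$ against arbitrary $a$: a function $a\in\mathrm{C}(X)$ concentrated on the levels making up $p_1$ gives $\tau(ap_1)$ far from $\frac{1}{n}\tau(a)$. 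Passing from $\tau(p_i)=\frac{1}{n}$ to $\tau(ap_i)=\frac{1}{n}\tau(a)$ needs either tower levels subordinate to arbitrarily fine covers (a small-boundary-type hypothesis that the (URP) of Definition \ref{Defn-URP} does not supply --- it holds, e.g., for positive mean dimension $\Int$-actions where the small boundary property fails) or a trace-factorization argument in the ultrapower carried out uniformly over the possibly infinite-dimensional simplex $\mathrm{T}(A)$; and the connecting elements of Lemma \ref{existence-0-map} are long translations that are not approximately central, so they cannot be used to transport traces between the pieces. As written, the nontrivial implication is therefore not proved.

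The paper's own argument sidesteps all of this by exploiting the stable rank one theorem it has just established: by Theorem \ref{main-thm} and Corollary \ref{Cu-surj} (Thiel's results \cite{Thiel-sr1}, which require stable rank one), strict comparison forces $\mathrm{Cu}(A)\cong V(A)\sqcup\mathrm{LAff}(\mathrm{T}(A))_{++}$, hence almost (tracial $0$-) divisibility of the Cuntz semigroup; Proposition 3.8 of \cite{Niu-MD-Z-absorbing} together with strict comparison then gives tracial $\mathcal Z$-stability, and nuclearity upgrades this to $A\cong A\otimes\mathcal Z$. If you want to keep your route, you would have to prove uniform property $\Gamma$ for these crossed products from scratch, which is a separate and substantially harder project than the short deduction available from the results already in hand.
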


\begin{proof}
One only needs to show the ``if" part. Let us show that $A$ is tracially $0$-divisible in the sense of Definition 3.5(ii) of \cite{Winter-Z-stable-02}. That is, for any positive contraction $a\in\mathrm{M}_\infty(A)$, any $k\in \mathbb N$, and any $\eps>0$, there is an order zero map $$\phi: \mathrm{M}_k(\Comp) \to \mathrm{Her}(a),$$ where $\mathrm{Her}(a)$ is the hereditary sub-C*-algebra generated by $a$, such that
$$\tau(\phi(1_k)) > \tau(a) - \eps,\quad  \tau\in \mathrm{T}(A).$$
It then follows from Proposition 4.7 of \cite{Niu-MD-Z-absorbing} and the strict comparison assumption that $A$ is tracially $\mathcal Z$-stable. Since $A$ is nuclear, by \cite{Matui-Sato-CP} and \cite{HO-Z}, it follows that $A\cong A\otimes\mathcal Z$.

The proof of the tracial $0$-divisibility is similar to that of Corollary 3.2 of \cite{Niu-MD-Z-absorbing}. For the given positive contraction $a$, let us consider the lower semicontinuous affine function
$$\mathrm{T}(A) \ni \tau \mapsto \frac{1}{k}\mathrm{d}_\tau(a) \in (0, \infty).$$ Since $A$ is assumed to have strict comparison of positive elements, $(X, \Gamma)$ particularly has the (COS). Then, by Corollary \ref{Cu-surj}, there is a positive element $x\in A\otimes\mathcal K$ such that
$$ \mathrm{d}_\tau(x) = \frac{1}{k}\mathrm{d}_\tau(a),\quad \tau \in\mathrm{T}(A).$$

For each pair of positive numbers $\delta_1< \delta_2$, define the continuous function
$$f_{\delta_1, \delta_2}(t) = \left\{ \begin{array}{ll} 0, & t \leq \delta_1, \\ \frac{t - \delta_1}{\delta_2 - \delta_1}, & \delta_1 < t < \delta_2, \\1, & t\geq \delta_2. \end{array} \right.$$
Also consider the continuous function $$f_\eps(t) := \max\{t-\eps, 0\},\quad t\in\mathbb R.$$ 
Then, since $A$ is simple,  with a sufficiently small $\delta>0$ (see, Remark 2.7 of \cite{Winter-Z-stable-02}), one has 
$$
\tau(f_{2\delta, 3\delta}(x)) > \frac{1}{k}\tau(f_{\eps}(a)) > \frac{1}{k}(\tau(a)-\eps),\quad \tau\in\mathrm{T}(A),
$$
and use the simplicity again, there is $\delta'>0$ such that
$$\tau(f_{\delta/2, \delta}(x)) < \mathrm{d}_\tau(x)-\delta'=  \frac{1}{k}(\mathrm{d}_\tau(a))-\delta',\quad \tau\in\mathrm{T}(A).$$
Thus, after a perturbation of $x$, there is a positive element $x'\in \mathrm{M}_\infty(A)$ such that
\begin{equation}\label{dini}
\tau(f_{2\delta, 3\delta}(x')) > \frac{1}{k}(\tau(a)-\eps),\quad \tau\in\mathrm{T}(A),
\end{equation}
and
$$\tau(f_{\delta/2, \delta}(x')) <   \frac{1}{k}\mathrm{d}_\tau(a)-\delta',\quad  \tau\in\mathrm{T}(A).$$

Note that $$k\mathrm{d}_\tau(f_{\delta, 2\delta}(x')) < k \tau(f_{\delta/2, \delta}(x')) <  \mathrm{d}_\tau(a),\quad  \tau\in\mathrm{T}(A).$$ Since $A$ has strict comparison, one has $k[f_{\delta/2, \delta}(x')] < [a]$ in $\mathrm{W}(A)$ ($x'$ and $a$ are in $\mathrm{M}_\infty(A)$). By Proposition 2.12 of \cite{Winter-Z-stable-02}, there is an order zero map $\phi:\mathrm{M}_k(\Comp) \to \mathrm{Her}(a)$ such that
$$\phi(e_{1, 1}) \approx f_{2\delta, 3\delta}(x'),$$
where $a\approx b$ denotes the relation $a=vv^*$, $b=v^*v$ for some $v$. In particular, by \eqref{dini},
$$\tau(\phi(1_k)) = k\tau(\phi(e_{1,1})) = k\tau(f_{2\delta, 3\delta}(x')) > \tau(a) -\eps,\quad \tau\in\mathrm{T}(A),$$
as desired.
%
\end{proof}


Since the real rank of a C*-algebra $A$ is at most $2\cdot \mathrm{tsr}(A) - 1$, one has the following estimate:
\begin{cor}\label{cor-RR}
Let $(X, \Int^d)$ be a free and minimal dynamical system. The real rank of $\mathrm{C}(X) \rtimes\Int^d$ is either $0$ or $1$.
\end{cor}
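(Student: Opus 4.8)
The plan is to obtain this as an immediate consequence of the stable rank one computation already in hand, together with the standard inequality comparing real rank and topological stable rank. First I would invoke Corollary \ref{sr-Z}, which gives $\mathrm{tsr}(\mathrm{C}(X) \rtimes \Int^d) = 1$ for any free and minimal $(X, \Int^d)$. The substance of the statement is therefore entirely carried by that corollary; what remains is a purely formal estimate.

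Next I would apply the general bound $\mathrm{RR}(A) \leq 2\,\mathrm{tsr}(A) - 1$, valid for any unital C*-algebra $A$ (this is the inequality recalled in the sentence preceding the statement, due to Brown and Pedersen). Setting $A = \mathrm{C}(X) \rtimes \Int^d$ and substituting $\mathrm{tsr}(A) = 1$ yields $\mathrm{RR}(A) \leq 1$. Since the real rank is by definition a non-negative integer (or $+\infty$) and is here bounded above by $1$, it must equal either $0$ or $1$, which is exactly the asserted dichotomy.

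There is no genuine obstacle in this last step: the entire difficulty of the corollary has already been absorbed into establishing $\mathrm{tsr}(A) = 1$, so the real rank estimate is a one-line deduction. The only point worth flagging is that this argument does not pin down which of the two values occurs---real rank zero would require projections rich enough to separate the relevant data (for instance in the presence of suitable Cantor factors), and in general one expects real rank one---but deciding between the two would need information beyond stable rank and is not needed for the stated conclusion.
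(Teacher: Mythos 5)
Your argument is exactly the paper's: the corollary is stated immediately after the remark that $\mathrm{RR}(A) \leq 2\cdot\mathrm{tsr}(A)-1$, so the intended proof is precisely to combine this inequality with $\mathrm{tsr}(\mathrm{C}(X)\rtimes\Int^d)=1$ from Corollary \ref{sr-Z}. Your write-up is correct and complete; no further comment is needed.
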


\begin{rem}
Consider a simple unital AH algebra $A$ with diagonal maps. It is known that if $A$ has real rank zero (or just projections separate traces), then $A$ is classifiable (\cite{Niu-MD}). Does the same statement hold for the crossed-product C*-algebras $\mathrm{C}(X) \rtimes\Int$ (or $\mathrm{C}(X) \rtimes\Gamma$ in general)? That is, if $\mathrm{C}(X) \rtimes\Int$ (or $\mathrm{C}(X) \rtimes\Gamma$,  in general) has real rank zero,  does $\mathrm{C}(X) \rtimes\Int$ (or $\mathrm{C}(X) \rtimes\Gamma$, in general) absorb the Jiang-Su algebra $\mathcal Z$ tensorially? What if one only assumes that projections separate traces instead of real rank zero?  
\end{rem}

Let $\Gamma$ be a countable discrete group with sub-exponential growth,  and let $(X, \Gamma)$ be a free and minimal dynamical system. Assume that $(X, \Gamma)$ is an extension of a minimal $\Gamma$-action on a Cantor set. Then it was shown in \cite{Suzuki-sr1} that the C*-algebra $\mathrm{C}(X) \rtimes \Gamma$ has stable rank one. Note that, by Corollary 3.8 and Corollary 8.11 of \cite{Niu-MD-Z}, the dynamical system $(X, \Gamma)$ has the (URP) and (COS), and therefore this result also can be deduced from Theorem \ref{main-thm}.
\begin{cor}[cf.~Main Theorem of \cite{Suzuki-sr1}]
Let $\Gamma$ be a countable discrete group with sub-exponential growth,  let $(X, \Gamma)$ be a free and minimal dynamical system. Assume that $(X, \Gamma)$ is an extension of a $\Gamma$-action on the Cantor set.  Then $\mathrm{tsr}(\mathrm{C}(X) \rtimes \Gamma) = 1.$
\end{cor}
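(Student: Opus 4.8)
The plan is to reduce the statement entirely to Theorem \ref{main-thm}, whose hypotheses require $\Gamma$ to be a countable discrete amenable group and $(X, \Gamma)$ to be free, minimal, and to possess the (URP) and (COS). Freeness and minimality are assumed outright, so the work lies in the remaining conditions. First I would observe that a group of sub-exponential growth is amenable: along a suitable subsequence the word-metric balls $B_n$ satisfy $\abs{B_{n+1}}/\abs{B_n} \to 1$, so they form a F{\o}lner sequence. Hence $\Gamma$ is countable discrete amenable, and Theorem \ref{main-thm} becomes applicable once the (URP) and (COS) are in hand.

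Next I would establish the (URP) and (COS) for $(X, \Gamma)$. Both are consequences of the assumption that $(X,\Gamma)$ factors onto a (minimal) $\Gamma$-action on the Cantor set, combined with sub-exponential growth, and are precisely the content of Corollary 3.8 (for the (URP)) and Corollary 8.11 (for the (COS)) of \cite{Niu-MD-Z}. Conceptually, the Cantor factor supplies clopen Kakutani--Rokhlin partitions which pull back to open Rokhlin towers in $X$; sub-exponential growth is what permits the tower shapes to be chosen $(\mathcal F, \eps)$-invariant while keeping the uncovered remainder small in every $\mu\in\mathcal M_1(X, \Gamma)$, which is the (URP). The same clopen structure, together with the comparison of clopen sets in the factor being controlled by invariant measures, yields $(\lambda, m)$-Cuntz comparison of open sets upstairs.

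Finally, with $\Gamma$ amenable and $(X, \Gamma)$ free, minimal, and satisfying the (URP) and (COS), Theorem \ref{main-thm} gives at once $\mathrm{tsr}(\mathrm{C}(X) \rtimes \Gamma) = 1$. The genuine obstacle is not in this last deduction but in the two cited verifications: establishing the (URP) for systems of sub-exponential growth with a Cantor factor is the delicate step, since one must simultaneously control the F{\o}lner quality of the tower shapes and the orbit capacity of the uncovered set---an argument carried out in \cite{Niu-MD-Z}. Granting those results, the corollary follows immediately, providing an alternative route to the main theorem of \cite{Suzuki-sr1}.
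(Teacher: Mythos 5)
Your proposal is correct and follows essentially the same route as the paper: cite Corollaries 3.8 and 8.11 of \cite{Niu-MD-Z} to obtain the (URP) and (COS) for a free minimal system with sub-exponential growth and a Cantor factor, then apply Theorem \ref{main-thm}. The only addition is that you make explicit the (standard) fact that sub-exponential growth implies amenability, which the paper leaves implicit.
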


\section{Two remarks on Property (D)}\label{D-remark}

In this final section, let us remark that simple $\mathcal Z$-stable C*-algebras and simple AH-algebras with diagonal maps all have Property (D). These C*-algebras (if finite for the case of $\mathcal Z$-stable C*-algebras) are known to have stable rank one (see \cite{Ror-Z-stable} and \cite{EHT-sr1}).  

\subsection{$\mathcal Z$-stable C*-algebras}

Let $A$ be a unital simple exact C*-algebra such that $A\cong A\otimes\mathcal Z$, where $\mathcal Z$ is the Jiang-Su algebra. Note that, by \cite{Ror-Z-stable}, $A$ has strict comparison of positive elements (we include infinite C*-algebras, which have empty tracial simplices). 

Let $a\in\mathrm{ZD}(A)$ with $\norm{a}=1$ and let $\eps>0$ be arbitrary. Pick $d_1, d_2\in A^+$ such that $\norm{d_1} = \norm{d_2} = 1$ and $$d_1a = ad_2 = 0.$$ By regarding $A$ as $A\otimes\mathcal Z\otimes\mathcal Z\otimes \mathcal Z$, one obtains $\tilde{a}, \tilde{d_1}, \tilde{d_2} \in A\otimes \mathcal Z\otimes 1 \otimes 1$ with norm one, where $\tilde{d}_1, \tilde{d_2}$ are positive, and a unitary $u\in A\otimes\mathcal Z\otimes\mathcal Z\otimes 1$ such that 
$$\norm{uau^*-\tilde{a}}<\frac{\eps}{12},\quad \norm{ud_1u^*-\tilde{d_1}} < \frac{\eps}{12},\quad \norm{ud_2u^* - \tilde{d_2}} < \frac{\eps}{12},$$
and then
$$\norm{\tilde{d_1}\tilde{a}} < \frac{\eps}{6}\quad\mathrm{and}\quad \norm{\tilde{a}\tilde{d_2}} < \frac{\eps}{6}.$$ 
With a small perturbation of $\tilde{d_1}$ and $\tilde{d_2}$, one may assume that there are positive elements $d_1', d_2'\in A\otimes \mathcal Z \otimes 1\otimes 1$ with $\norm{d_1'} =  \norm{d_2'} =1$ such that $$d'_1 \tilde{d_1} = d'_1\quad\mathrm{and}\quad d'_2 \tilde{d_2} = d'_2.$$
Note that 
$$ (1-\tilde{d_1})\tilde{a}(1-\tilde{d_2}) \approx_{\frac{\eps}{2}} \tilde{a}\quad\mathrm{and}\quad d_1'(1-\tilde{d_1})\tilde{a}(1-\tilde{d_2}) = (1-\tilde{d_1})\tilde{a}(1-\tilde{d_2}) d_2' =0 .$$ 
Pick two orthogonal non-zero positive elements $s_1, s_2\in 1\otimes 1 \otimes \mathcal Z \otimes 1$, and consider the positive elements  $\tilde{d}_1 s_1$ and $\tilde{d}_2 s_2$. 
Since $s_1, s_2$ commute with $\tilde{d_1}, \tilde{d_2}$, one has that $$\tilde{d}_1 s_1\perp \tilde{d}_2 s_2 \quad \mathrm{and}\quad (\tilde{d}_1s_1)((1-d'_1)\tilde{a}(1-\tilde{d_2})) = ((1-\tilde{d_1})\tilde{a}(1-\tilde{d_2})) (d_2's_2) =0.$$ Since $A\otimes\mathcal Z\otimes\mathcal Z \otimes 1$ is simple, there is $v\in A\otimes\mathcal Z\otimes\mathcal Z\otimes 1$ with $\norm{v} =1$ such that $$vv^*\in \mathrm{Her}(d_1s_1)\quad \mathrm{and}\quad v^*v \in \mathrm{Her}(d_2s_2),$$ and, moreover, using polar decomposition and making a further perturbation, one may assume that there is a positive element $b$ such that $\norm{b} = 1$ and $(vv^*) b = b$ (and hence $b\in \mathrm{Her}(d_1s_1)$). It then follows from Lemma \ref{switch} that there is a unitary $w\in A\otimes\mathcal Z\otimes\mathcal Z \otimes 1$ such that
$$wbw^* \in \mathrm{Her}(d_2s_2).$$ Thus, with $$a' := (1-\tilde{d_1})\tilde{a}(1-\tilde{d_2})w,$$ one has
$$\norm{uau^*w -a'}< \eps\quad\mathrm{and}\quad b a' = 0 = a'b.$$

Let us show that $a'$ is a $\mathcal D_0$-element, and thus that $A$ has Property (D).

Since $A$ is not of type I, there are positive elements $c, d\in \overline{b(A\otimes\mathcal Z\otimes \mathcal Z\otimes 1)b}$ such that $c \perp d$ and $\norm{c} = \norm{d} = 1$. Since $A$ is simple, there is $\delta>0$ such that $$\tau(c), \tau(d) > \delta,\quad \tau\in\mathrm{T}(A).$$

Now, consider the embedding $\phi': \mathcal Z \to 1 \otimes 1 \otimes 1 \otimes \mathcal Z$, 
and note that
\begin{equation}\label{center-Z}
[a', \phi'(a)] = 0 \quad \mathrm{and} \quad[b, \phi'(a)] = 0,\quad a\in\mathcal Z.
\end{equation}
Pick $n\in\mathbb N$ sufficiently large that
$$(n-3)\delta > 6,$$
and pick a standard embedding 
$$\iota: \mathrm{M}_{n^2}(\mathrm{C}_0((0, 1]))\to \mathcal Z_{n^2, n^2+1}\to \mathcal Z.$$

 Denote by $\phi''$ the order zero map induced by the homomorphism $\phi'\circ\iota$, and choose $\eps'>0$ sufficiently small that 
 $$\tau(f_{\eps'}(\phi''(1))) > 1- \delta/2n,\quad \tau\in\mathrm{T}(A).$$
For each $k=1, 2, ..., n$, define
$$c_k = c \cdot f_{\eps'}(\phi'')(e_{(k-1)n+4}+\cdots +e_{(k-1)n+n} ),$$
and
$$d_k = d \cdot f_{\eps'}(\phi'')(e_{(k-1)n+1}+\cdots +e_{(k-1)n+n} ).$$
A straightforward calculation (using  \eqref{center-Z}) shows that $c_k$, $d_k\in \overline{b(A\otimes\mathcal Z\otimes \mathcal Z\otimes \mathcal Z)b}$, $c_k\perp d_k$ and
$$c_k E_k = c_k \quad\mathrm{and}\quad d_k E_k = d_k,$$
where $E_k:= f_{\frac{\eps'}{2}}(\phi'')(e_{(k-1)n+1}+\cdots +e_{(k-1)n+n} )$.

Note that, for any $\tau \in \mathrm{T}(A)$, $$\mathrm{d}_\tau(c_k)> \delta \cdot  \frac{n-3}{n^2} \cdot \frac{2n-\delta}{2n} > \frac{3}{n^2} \cdot \frac{4n-2}{2n} > \frac{3}{n^2}  $$    
and  
$$\mathrm{d}_\tau(d_k) > \delta \cdot  \frac{1}{n} \cdot \frac{2n-\delta}{2n} > \delta \cdot  \frac{1}{n} \cdot \frac{2n-1}{2n}  > \frac{\delta}{2n} > \mathrm{d}_\tau(1- f_{\frac{\eps}{2}}(\phi''(1))).$$ Since $A$ has strict comparison of positive elements, one has that
$$s_k \precsim c_k\quad\mathrm{and}\quad 1- f_{\frac{\eps}{2}}(\phi''(1)) \precsim d_k.$$
This shows that $a'$ is a $\mathcal D_0$-element (with $\phi=f_{\frac{\eps'}{2}}(\phi'')$, $p=q=n$, $r=3$, and $l=1$ in Definition \ref{defn-Prop-D0}), as asserted.

\subsection{AH algebras with diagonal maps}

Recall that an AH algebra with diagonal maps is the limit of a unital inductive sequence $(A_n, \psi_n)$, where $$A_n = \bigoplus _{i=1}^{h_n}\mathrm{M}_{k_{n, i}}(\mathrm{C}(X_{n, i}))$$ for some compact metrizable space $X_{n, i}$, and if 
$$D_n :=  \bigoplus_i \{\mathrm{diag}\{f_1, f_2, ..., f_{k_{n, i}}\}: f_k\in\mathrm{C}(X_{n, i})\} \subseteq \bigoplus_{i}\mathrm{M}_{k_{n, i}}(\mathrm{C}(X_{n, i}))=A_n,$$
then
$$\psi_n(D_n)\subseteq D_{n+1}.$$


Let $A$ be a simple AH algebra with diagonal maps. It then follows from Theorem 3.4 of \cite{EHT-sr1} that $A$ has Property (D); we leave the details to the reader. Alternatively, let us propose the following approach which is similar to our approach to the crossed product C*-algebra $\mathrm{C}(X) \rtimes\Gamma$:
%
%
Consider the limit diagonal algebra $$D:=\varinjlim D_n \subseteq  \varinjlim A_n = A.$$ Then the commutative sub-C*-algebra $D$ actually behaves like the sub-C*-algebra $\mathrm{C}(X)$ of $\mathrm{C}(X)\rtimes\Gamma$.
 
Let $a\in A$ satisfy $\norm{a} = 1$ and $a\in\mathrm{ZD}(A)$, i.e.,  $d_1a=ad_2=0$ for some non-zero positive elements $d_1, d_2$, and let $\eps>0$ be arbitrary. With a telescoping of the inductive sequence if necessary, there are $\tilde{a}, \tilde{d}_1, \tilde{d}_2 \in A_1$ with norm one such that 
$$\norm{a-\tilde{a}} < \frac{\eps}{2}\quad\mathrm{and}\quad\norm{\tilde{d}_1 \tilde{a}},  \norm{\tilde{a} \tilde{d}_1} < \frac{\eps}{12},$$ 
where $\tilde{d}_1, \tilde{d}_2$ are positive. 
Since $\tilde{d}_1, \tilde{d}_2$ has norm one, there is $x_0\in X_{1, i}$ for some $i$ such that $$\norm{\tilde{d}_1(x_0)}=1\quad\mathrm{and}\quad \norm{\tilde{d}_2(x_0)} = 1.$$ 
Since $\tilde{d}_1, \tilde{d}_2$ are positive, by conjugating some constant unitary matrices, one may assume that $\tilde{d}_1$ and $\tilde{d}_2$ are diagonal matrices at $x_0$. Hence, by cutting the diagonal entry which has value $1$ at $x_0$, one can find a positive element $h\in D_1$ which is constant equal to $1$ on a small neighbourhood of $x_0$ such that 
$$h\tilde{d}_1 \approx_\frac{\eps}{12} h\quad\mathrm{and}\quad  h\tilde{d}_2 \approx_\frac{\eps}{12} h.$$ Then a straightforward calculation shows that 
$$\norm{h\tilde{a}} < \frac{\eps}{6}\quad\mathrm{and}\quad \norm{\tilde{a}h} < \frac{\eps}{6}.$$ 
Since $h$ is constant equal to $1$ on a neighbourhood of $x_0$, there is a positive element $b\in D_n$ with norm $1$ such that  $bh=b$. Then
$$a':=(1-h)\tilde{a}(1-h)$$ and $b$ satisfy
$$\norm{a - a'}< \eps\quad\mathrm{and}\quad ba' = a'b = 0.$$

Now, let us show that $a'$ is a $\mathcal D_0$-element, and thus that $A$ has Property (D). (Recall that, at this stage, $a$ has been replaced by $u_1au_2$ as allowed in Definition \ref{defn-Prop-D}.)

Choose positive orthogonal functions $c, d\in D_1$ such that $c, d\in bD_1b$. Set $$\delta = \min\{\tau(c), \tau(d); \tau\in \mathrm{T}(A)\}>0.$$ 
By another telescoping if necessary, one may assume that
\begin{equation}\label{pre-dense}
\frac{3}{k_{1, i}} < \frac{\delta}{2} < \frac{\min\{\mathrm{rank}(c(x)), \mathrm{rank}(d(x)) \}}{k_{1, i}},\quad x\in X_{1, i}.
\end{equation}
Choose $l\in\mathbb N$ such that $$\frac{12}{l-3} < \frac{\delta}{2}.$$

Set $K:=\max\{k_{1, 1}, ..., k_{1, h_1}\}+1$. Consider $A_2$, and to simplify notation, rewrite $A_{2} = \bigoplus_{s=1}^S \mathrm{M}_{k_s}(\mathrm{C}(X_s))$. With a telescoping of the inductive sequence if necessary, one has that, inside each direct summand of $A_{2}$,
\begin{enumerate}

\item\label{cond-diag-1}  the element $a'$ is a matrix of continuous functions with 
          \begin{equation}\label{band-diag} a'_{i, j} = 0,\quad\mathrm{if}\ \abs{i - j}\geq K,\end{equation} 

\item\label{cond-diag-2}  with $$c=\bigoplus_{s=1}^S \mathrm{diag}\{c_1^{(s)}, ..., c^{(s)}_{k_s}\}\quad \mathrm{and}\quad d=\bigoplus_{s=1}^S \mathrm{diag}\{d_1^{(s)}, ..., d^{(s)}_{k_s}\},$$ where $c^{(s)}_{i}, d^{(s)}_{i}  \in\mathrm{C}(X_s)$, by \eqref{pre-dense}, for any $ L =1, ...,  k_s$, 
\begin{equation}\label{dense-c} 
\frac{\delta}{2}(1 -\frac{2K}{L})  < \frac{\abs{\{i_0 \leq i \leq i+L-1: c_i^{(s)}(x) \neq 0 \}}}{L},\quad 1\leq i_0 <k_s-L,\  x\in X_s,
\end{equation} 
and
\begin{equation}\label{dense-d} 
\frac{\delta}{2}(1 -\frac{2K}{L})  < \frac{\abs{\{i_0 \leq i \leq i+L-1: d_i^{(s)}(x) \neq 0 \}}}{L},\quad 1\leq i_0 <k_s-L,\  x\in X_s,
\end{equation}

\item\label{cond-diag-3} with $k_s = m_s l^2 + r_s$, $0\leq r_s < l^2$, one has 
\begin{equation}\label{large-m-1}
m_sl > 2K, \quad \frac{K}{m_s} < \frac{1}{2}, \quad \mathrm{and}\quad \frac{4r_s}{m_sl- 2K} < \frac{\delta}{2}.
\end{equation}

\end{enumerate}

Then, for each $s=1, ..., S$, consider the elements $\mathrm{M}_{k_s}(\Comp) \subseteq \mathrm{M}_{k_s}(\mathrm{C}(X_s))$, and consider the projection 
$$p^{(s)}_i:=\mathrm{diag}\{\overbrace{\underbrace{0_{m_s}, ..., 0_{m_s}, 1_{m_s} }_{im_s}, 0_{m_s}, ..., 0_{m_s}}^{l^2m_s}, 0_{r_s}\},\quad i=1, ..., l^2.$$
Note that $p^{(s)}_1, p^{(s)}_2, ..., p^{(s)}_{l^2} \subseteq \mathrm{M}_{k_s}(\Comp)$ have the same rank and are mutually orthogonal. Therefore, there is a homomorphism
$$\phi_s: \mathrm{M}_{l^2}(\Comp) \ni e_{i, i} \mapsto p^{(s)}_i \in \mathrm{M}_{k_s}(\Comp)  \subseteq  A_{2}.$$ 
Consider the direct sum map 
$$ \phi: = \bigoplus _s \phi_s: \mathrm{M}_{l^2}(\Comp) \to \bigoplus_s \mathrm{M}_{k_s}(\Comp)  \subseteq  A_{2},$$
and set $ \phi(e_{i, i}) = e_i$, $i=1, 2, ..., l^2$, $e_{l(k-1)+1} + \cdots + e_{l(k-1)+4} = s_k$, $e_{l(k-1)+1} + \cdots + e_{lk}= E_k$, $k=1, ..., l$, and $\phi(1) = h$.

Then, since $m_sl >2K$, by \eqref{band-diag}, $$E_{k_1} a' E_{k_2} = 0,\quad k_2 - k_1 \geq 2.$$

For each $k=1, 2, ..., l$, consider the diagonal elements 
$$ c_{k}:=\bigoplus_{s}\mathrm{diag}\{\overbrace{\underbrace{0_{m_s},..., 0_{m_s}}_{l(k-1)m_s}, \underbrace{\underbrace{0_{m_s},..., 0_{m_s}}_{4m_s}, c^{(s)}_{(l(k-1)+4)m_s+1}, ..., c^{(s)}_{lkm_s}}_{lm_s}, 0_{m_s}, ..., 0_{m_s}}^{l^2m_s}, 0_{r_s} \},$$ 
and
$$d_{k}:=\bigoplus_s \mathrm{diag}\{\overbrace{\underbrace{0_{m_s},..., 0_{m_s}}_{l(k-1)m_s}, \underbrace{d^{(s)}_{l(k-1)m_s+1}, ..., d^{(s)}_{lkm_s}}_{lm_s}, 0_{m_s}, ..., 0_{m_s}}^{l^2m_s}, 0_{r_s} \}.$$ 
Then it is clear that $c_k \perp s_k$, $c_k \perp d_k$, $c_kE_k = c_k$ and $d_kE_k = d_k$.

Note that, by \eqref{dense-c}, \eqref{dense-d}, and \eqref{large-m-1},
$$\frac{1}{4}\mathrm{rank}(c_k(x)) > \frac{1}{4} \cdot \frac{\delta}{2}((l-4)m_s- 2K) > 3 m_s = \mathrm{rank}(s_k(x))$$
and
$$\frac{1}{4}\mathrm{rank}(d_k(x)) >\frac{1}{4} \cdot \frac{\delta}{2}(lm_s- 2K) >  r_s = \mathrm{rank}((1-h)(x)).$$
Since $s_k, c_k, d_k$ and $1-h$ are diagonal elements, by Theorem 7.8 of \cite{Niu-MD-Z}, one has 
$$s_k \precsim c_k\quad\mathrm{and}\quad 1-h \precsim d_k.$$ Therefore, $a'$ is a $\mathcal D_0$-element (with $p=q=l$, $l=2$, and $r=4$ in Definition \ref{defn-Prop-D0}).

\bibliographystyle{plainurl}
\bibliography{operator_algebras}

\end{document}